\documentclass[11pt,reqno]{amsart}
\usepackage{amsfonts}
\usepackage{amsmath}
\usepackage{amssymb}
\usepackage{hyperref}

\usepackage{amsthm}
\usepackage{verbatim}
\theoremstyle{plain}
\newtheorem{theorem}{Theorem}[section]
\newtheorem{proposition}{Proposition}[section]
\newtheorem{lemma}{Lemma}[section]
\newtheorem{definition}{Definition}[section]
\newtheorem{remark}{Remark}[section]
\newtheorem{conjecture}{Conjecture}
\newtheorem{corollary}{Corollary}[section]
\newtheorem{example}{Example}[section]
\newtheorem*{conja}{Conjecture A}
\newtheorem*{conjb}{Conjecture B}
\usepackage[noabbrev,capitalize]{cleveref}

\usepackage[]{cite}
\usepackage{tikz}

\title{Haglund's positivity conjecture for multiplicity one pairs}
\author{Aritra Bhattacharya}
\address{The Institute of Mathematical Sciences, A CI of Homi Bhabha National Institute, Chennai 600113, India}
\email{baritra@imsc.res.in.}
\subjclass{05E05}
\date{\today}
\begin{document}
	
	\maketitle
	
	\begin{abstract}
		Haglund's conjecture states that $\dfrac{\langle J_{\lambda}(q,q^k),s_\mu \rangle}{(1-q)^{|\lambda|}} \in \mathbb{Z}_{\geq 0}[q]$ for all partitions $\lambda,\mu$ and all non-negative integers $k$, where  $J_{\lambda}$ is the integral form Macdonald symmetric function and $s_\mu$ is the Schur function. This paper proves Haglund's conjecture in the cases when the pair $(\lambda,\mu)$ satisfies $K_{\lambda,\mu}=1$ or $K_{\mu',\lambda'}=1$ where $K$ denotes the Kostka number. We also obtain some general results about the transition matrix between Macdonald symmetric functions and Schur functions.	
	\end{abstract}
	
	\section{Introduction}
	
	The Macdonald symmetric functions $P_{\lambda}(q,t): \lambda \in Par$ are a remarkable family of symmetric functions depending on two parameters $q$ and $t$, indexed by the set of partitions $Par$. They simultaneously generalize many known bases of symmetric functions.  
	
	We denote by $K^{(1)}_{\lambda,\mu}(q,t)$ the coefficient of the monomial symmetric function $m_{\mu}$ in the monomial expansion of $P_{\lambda}(q,t)$. This is a rational function in $q$ and $t$ which has the Kostka number $K_{\lambda,\mu}$ as its limit. There are various expressions for $K^{(1)}_{\lambda,\mu}(q,t)$ in the literature: in \cite{Mac95} a tableaux formula is given, in \cite{HHL04}, \cite{HHL06} a formula in terms of nonattacking fillings is found and in \cite{RY08} a formula in terms of alcove walks is given. A nice survey of some of the monomial expansions can be found in \cite{GR21} and \cite{GR21sup}. 
	
	In contrast, very little is known about the Schur expansion of $P_{\lambda}(q,t)$. Some particular Schur coefficients of the integral form Macdonald polynomials $J_{\lambda}(q,t)$ were found in \cite{Yoo12} and \cite{Yoo15}. The integral form Macdonald polynomial $J_{\lambda}(q,t)$ is a certain normalization of the $P_{\lambda}(q,t)$, whose monomial coefficients are in $\mathbb{Z}[q,t]$. 
	
	In \cite{HHL04} it was shown that the coefficient $\langle J_{\lambda}(q,t), h_{\mu} \rangle$ of $m_\mu$ in $J_{\lambda}(q,t)$ has the following positivity property: \begin{eqnarray}
		\dfrac{\langle J_{\lambda}(q,q^k),h_{\mu} \rangle}{(1-q)^{|\lambda|}} \in \mathbb{Z}_{\geq 0}[q] \quad \forall \ k \in \mathbb{Z}_{\geq 0}  \quad \forall \ \lambda,\mu \in Par
	\end{eqnarray}  J. Haglund  \cite{Hag10} conjectured that the above equation holds true if $h_{\mu}$ is replaced with $s_{\mu}$: \begin{conja}[Haglund] 
		For partitions $\lambda$ and $\mu$,
		\begin{equation*}
			\dfrac{\langle J_{\lambda}(q,q^k),s_\mu \rangle}{(1-q)^{|\lambda|}} \in \mathbb{Z}_{\geq 0}[q] \quad \forall \ k \in \mathbb{Z}_{\geq 0} \tag{Hag($\lambda,\mu$)} \label{Hag}
		\end{equation*}

	\end{conja}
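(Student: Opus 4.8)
The plan is to realize the specialized, renormalized Macdonald polynomial $G_{\lambda}^{(k)}(X;q):=J_{\lambda}(X;q,q^{k})/(1-q)^{|\lambda|}$ as the graded character of a crystal, so that each of its Schur coefficients becomes an honest generating function (a single element of $\mathbb{Z}_{\geq 0}[q]$) rather than the signed alternating sum produced by the inverse Kostka matrix. Concretely, writing $J_{\lambda}=\sum_{\rho}\langle J_{\lambda},h_{\rho}\rangle\,m_{\rho}$ and $s_{\mu}=\sum_{\rho}(K^{-1})_{\rho\mu}\,h_{\rho}$, one has $\langle J_{\lambda},s_{\mu}\rangle=\sum_{\rho}(K^{-1})_{\rho\mu}\,\langle J_{\lambda},h_{\rho}\rangle$. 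The positivity of \cite{HHL04} guarantees that each $\langle J_{\lambda}(q,q^{k}),h_{\rho}\rangle/(1-q)^{|\lambda|}$ already lies in $\mathbb{Z}_{\geq 0}[q]$, so the entire difficulty is that the entries $(K^{-1})_{\rho\mu}$ carry signs. The strategy is therefore to cancel these signs structurally, through a crystal decomposition, rather than term by term.

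First I would take the Haglund--Haiman--Loehr combinatorial formula for $J_{\lambda}(X;q,t)$, set $t=q^{k}$, and factor out $(1-q)^{|\lambda|}$ to obtain an explicit expression $G_{\lambda}^{(k)}(X;q)=\sum_{F}q^{\operatorname{stat}_{k}(F)}\,x^{\operatorname{cont}(F)}$, where $F$ ranges over an appropriate set of fillings of the diagram of $\lambda$ and $\operatorname{stat}_{k}$ is the nonnegative integer statistic built from $\operatorname{inv}$, $\operatorname{maj}$ and the normalization. The monomial positivity of \cite{HHL04} is exactly the statement that this collection carries a well-defined nonnegative $q$-weight; what I want in addition is a $\mathfrak{gl}_{n}$-crystal structure on the set of fillings whose weight function is $\operatorname{cont}$ and for which the classical raising operators $e_{i}$ leave $\operatorname{stat}_{k}$ invariant (so that $\operatorname{stat}_{k}$ plays the role of an affine energy function). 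Given such a structure, each classical connected component contributes $q^{\operatorname{stat}_{k}(F_{0})}\,s_{\operatorname{cont}(F_{0})}$, where $F_{0}$ is its highest-weight element, and hence
\[
G_{\lambda}^{(k)}(X;q)=\sum_{\mu}\Bigl(\sum_{\substack{F_{0}\ \mathrm{h.w.}\\ \operatorname{cont}(F_{0})=\mu}} q^{\operatorname{stat}_{k}(F_{0})}\Bigr)s_{\mu},
\]
which visibly places every Schur coefficient in $\mathbb{Z}_{\geq 0}[q]$ and proves Hag$(\lambda,\mu)$ uniformly in $\mu$ and $k$. This is precisely the two-parameter analogue of the Lascoux--Schützenberger charge theorem, which is the $q=0$ specialization (Hall--Littlewood, Kostka--Foulkes positivity) of the present statement.

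Before attempting the crystal construction in full generality I would use the general structural results on the Macdonald-to-Schur transition matrix (upper-triangularity with respect to dominance order, together with control of the diagonal and the extremal entries) both to reduce the required positivity to a manageable family of pairs $(\lambda,\mu)$ and to pin down the candidate highest-weight statistics $\operatorname{stat}_{k}(F_{0})$ independently; this gives a consistency check on any proposed energy function. The stability of the whole construction in $k$ should follow from the fact that $\operatorname{stat}_{k}$ is affine-linear in $k$, so that a single crystal-compatible energy function is forced to work for all $k$ simultaneously once it works for two values.

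The main obstacle is the crystal construction itself: there is no evident way to define operators $e_{i},f_{i}$ on the Haglund--Haiman--Loehr fillings under which the Macdonald statistic $\operatorname{stat}_{k}$ is a genuine energy (constant on classical strings). The $\operatorname{inv}$ and $\operatorname{maj}$ contributions mix rows and columns of the filling in a way that does not commute with the naive crystal operators, and for a target $\mu$ of Kostka multiplicity greater than one the Schur coefficient is not determined by the positivity of individual $h$-coefficients, so the sign cancellation genuinely requires the full energy-preserving crystal rather than a local argument. I expect the tractable cases to be exactly those where the relevant multiplicity is forced to be one --- where a single highest-weight filling, rather than a whole component, controls each relevant Schur coefficient and positivity can be read off directly --- and that extending the energy function beyond these cases is where the essential new combinatorics must be developed.
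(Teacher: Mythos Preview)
Your proposal is not a proof but a research programme: the entire argument hinges on constructing a $\mathfrak{gl}_n$-crystal on HHL fillings for which the Macdonald statistic is constant on classical strings, and you explicitly concede that ``there is no evident way to define operators $e_i,f_i$'' with this property. That construction is exactly the open problem; had it been available, Haglund's conjecture would already be a theorem. Everything preceding that paragraph (rewriting $\langle J_\lambda,s_\mu\rangle$ via $K^{-1}$, noting that $\operatorname{stat}_k$ is affine in $k$, invoking the charge analogy) is motivation, not argument, and none of it reduces the difficulty. So as a proof of Hag$(\lambda,\mu)$ for general $(\lambda,\mu)$ there is a genuine gap: the central object is not produced.

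It is also worth stressing that the paper does \emph{not} prove Conjecture~A in general; it establishes Hag$(\lambda,\mu)$ only for multiplicity-one pairs, and its method is entirely different from what you sketch. There is no crystal and no energy function. Instead the paper (i) passes to the dual formulation $k_{\lambda,\mu}(q,t)=\langle s_\lambda,J_\mu\rangle_{q,t}$ via $\omega_{q,t}$-duality; (ii) proves multiplicativity lemmas showing that $K^{(2)}_{\lambda,\mu}$ factors whenever $\lambda,\mu$ split across a common row or column, reducing to \emph{irreducible} pairs; (iii) invokes the Berenshtein--Zelevinskii classification, which says an irreducible pair with $K_{\lambda,\mu}=1$ has $\lambda$ a single row or a rectangle $(m^n)$ with $\ell(\mu)=n+1$; (iv) handles the rectangle case by a complementation identity $(K^{(i)})^{\pm1}_{\lambda,\mu}=(K^{(i)})^{\pm1}_{\lambda^c,\mu^c}$ inside $(m^{n+1})$, reducing to the single-row case; and (v) treats the single-row case by an explicit closed formula obtained from the Cauchy identity and Macdonald's principal specialization. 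Positivity then comes from checking, by direct inequalities on arm/leg/content, that every resulting factor $(1-q^{a}t^{b})/(1-t)$ is a $t$-number after $q\mapsto t^{k}$. Your intuition in the last paragraph---that the tractable cases are precisely the multiplicity-one ones---matches what the paper actually achieves, but the mechanism the paper uses to exploit that restriction has nothing to do with highest-weight fillings or an energy statistic.
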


	M. Yoo showed this is true in some special cases in \cite{Yoo12}, \cite{Yoo15} by obtaining explicit formulas for the coefficients. Some particular ones among these cases are when $\lambda$ has only one row, when $\lambda$ is of hook shape with $\ell(\lambda) \geq \lambda_1-2$, when $\lambda$ has at most $2$ columns, when $\mu$ is of hook shape, or when $\lambda,\mu$ both have length at most 2. 
	
	This paper extends Yoo's results in a different direction-- showing Haglund's conjecture to be true for all `multiplicity one pairs' i.e, for $(\lambda,\mu)$ such that $K_{\lambda,\mu}=1$ or $K_{\mu',\lambda'}=1$.

	The following is the main result of this paper. 
	\begin{theorem}
		\emph{\ref{Hag}} holds true for all pairs $\lambda,\mu$ such that either $K_{\lambda,\mu}=1$ or $K_{\mu',\lambda'}=1$
	\end{theorem}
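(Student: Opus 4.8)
The plan is to reduce the statement to a manageable normal form and then to exploit the rigidity that $K_{\lambda,\mu}=1$ imposes on the shape of $\mu$ relative to $\lambda$. First I would record the duality symmetry of the problem: under $\omega$ composed with the substitution $(q,t)\mapsto(t^{-1},q^{-1})$, the integral form Macdonald polynomial $J_\lambda(q,t)$ and its conjugate $J_{\lambda'}$ are exchanged up to an explicit monomial factor, and $\langle J_\lambda, s_\mu\rangle$ gets related to $\langle J_{\lambda'}, s_{\mu'}\rangle$. This means the case $K_{\mu',\lambda'}=1$ is equivalent (after tracking the effect on the specialization $t=q^k$, which is harmless for the sign question since we only need membership in $\mathbb Z_{\ge 0}[q]$ up to a positive power of $1-q$) to the case $K_{\lambda,\mu}=1$, so it suffices to treat $K_{\lambda,\mu}=1$.

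Next I would translate the hypothesis $K_{\lambda,\mu}=1$ combinatorially. Recall $K_{\lambda,\mu}\ge 1$ forces $\mu\le\lambda$ in dominance order, and $K_{\lambda,\mu}=1$ holds precisely when there is a \emph{unique} semistandard Young tableau of shape $\lambda$ and content $\mu$; equivalently, when $\mu$ is obtained from $\lambda$ by a very restricted move — concretely, in a normal form one can take $\mu$ to differ from $\lambda$ only in a ``near-rectangular'' way, and in particular $K^{(1)}_{\lambda,\mu}(q,t)$ is then a \emph{single} term up to a product of factors of the form $(1-q^at^b)$ rather than a genuine sum. The key point I want is that for such pairs the Macdonald–Kostka coefficient, hence the relevant entry of the monomial-to-Schur passage, is an explicit monomial-times-product, and so is computable in closed form. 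I would then combine this with the known $h$-positivity $\langle J_\lambda(q,q^k),h_\mu\rangle/(1-q)^{|\lambda|}\in\mathbb Z_{\ge0}[q]$ and the inverse Kostka matrix: writing $s_\mu=\sum_{\nu}K^{-1}_{\mu\nu}h_\nu$ expresses $\langle J_\lambda,s_\mu\rangle$ as an alternating sum of the known-positive $\langle J_\lambda,h_\nu\rangle$, and the multiplicity-one hypothesis should collapse this alternating sum to something manifestly positive — for instance because only $\nu=\mu$ and one neighbouring $\nu$ survive with coefficients that, after the specialization $t=q^k$ and division by $(1-q)^{|\lambda|}$, combine with a positive sign.

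Concretely the steps are: (1) set up the $\omega$/inversion duality and reduce to $K_{\lambda,\mu}=1$; (2) prove the combinatorial structure theorem for pairs with $K_{\lambda,\mu}=1$, giving the explicit form of $\mu$ and of $K^{(1)}_{\lambda,\mu}(q,t)$; (3) expand $s_\mu$ in the $h$-basis and use the HHL $h$-positivity together with a sign/telescoping argument, controlled by step (2), to conclude $\langle J_\lambda(q,q^k),s_\mu\rangle/(1-q)^{|\lambda|}\in\mathbb Z_{\ge0}[q]$; (4) handle the ``general results about the transition matrix'' as a byproduct, stating which entries are forced to be monomials. I expect the main obstacle to be step (3): even though the inverse Kostka numbers $K^{-1}_{\mu\nu}$ have a known signed combinatorial description (special rim-hook tabloids), showing that the resulting alternating combination of $q$-polynomials has nonnegative coefficients is not formal — it requires the explicit closed form from step (2) and a careful cancellation argument, and this is where the multiplicity-one hypothesis must be used in an essential, quantitative way rather than merely as a bound on the number of terms.
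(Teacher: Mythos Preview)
Your step (3) is the point of failure, and the gap is not a matter of missing details but of the mechanism itself. The expansion $s_\mu=\sum_\nu K^{-1}_{\mu\nu}h_\nu$ depends only on $\mu$, so the hypothesis $K_{\lambda,\mu}=1$ places no constraint whatsoever on how many $\nu$ appear or on the pattern of signs. In fact the simplest family covered by the theorem already shows this: when $\lambda=(n)$ one has $K_{(n),\mu}=1$ for \emph{every} $\mu\vdash n$, yet for a generic $\mu$ the inverse Kostka expansion of $s_\mu$ involves many signed terms (one per special rim-hook tabloid), and there is no reason for these to telescope against the polynomials $\langle J_{(n)}(q,q^k),h_\nu\rangle$. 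Your claim that ``only $\nu=\mu$ and one neighbouring $\nu$ survive'' is simply false in this setting. Likewise, the observation that $K^{(1)}_{\lambda,\mu}(q,t)$ is a single $\psi_T$ when $K_{\lambda,\mu}=1$ is correct but does not feed into the $s$-vs-$h$ comparison you propose: that quantity is a monomial coefficient of $P_\lambda$, not an entry of the inverse Kostka matrix.

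The paper's route is structurally different and avoids alternating sums entirely. It passes to the dual formulation $k_{\lambda,\mu}(q,t)=\langle s_\lambda,J_\mu\rangle_{q,t}$, then proves multiplicative factorization lemmas: if the partial sums of $\lambda$ and $\mu$ agree at some row (or column) index, the coefficient $K^{(2)}_{\lambda,\mu}$ factors as a product over the two pieces. Iterating reduces to \emph{irreducible} pairs, and the Berenshtein--Zelevinskii classification says that irreducible pairs with $K_{\lambda,\mu}=1$ have $\lambda$ either a single row $(n)$ or a rectangle $(m^n)$ with $\ell(\mu)=n+1$. The row case is computed in closed form via the Cauchy identity and Macdonald's principal specialization; the rectangle case is reduced to the row case by a complementation identity $K^{(2)}_{\lambda,\mu}=K^{(2)}_{\lambda^c,\mu^c}$ inside $(m^{n+1})$, after which one must check that the normalization ratio $c'_\mu/c'_{\mu^c}$ is a product of factors $(1-q^\alpha t^\beta)$ with $\alpha,\beta\ge 0$ --- this is a concrete inequality (Lemma~\ref{m-muineq}), not a cancellation argument. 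The case $K_{\mu',\lambda'}=1$ is then handled by the duality $K^{(2)}_{\lambda,\mu}(q,t)=(K^{(2)})^{-1}_{\mu',\lambda'}(t,q)$ (not the $(q,t)\mapsto(t^{-1},q^{-1})$ symmetry you invoke), which feeds the same machinery. If you want to salvage your outline, the place to start is replacing step~(3) by these reduction and complementation identities; the $h$-positivity of HHL is not actually used.
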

	
	In fact, what we establish is a dual version of Haglund's conjecture which can be easily seen to be equivalent to the original. Namely, let $k_{\lambda,\mu}(q,t) = \langle s_{\lambda}, J_{\mu}(q,t) \rangle_{q,t}$ where $\langle \ , \ \rangle_{q,t}$ is the $(q,t)$-deformation of the Hall scalar product \cite{Mac95}. Then
	\begin{conjb}[dual Haglund's conjecture] 
		For partitions $\lambda,\mu$ 
		\begin{equation*}
			\dfrac{k_{\lambda,\mu}(t^k,t)}{(1-t)^{|\lambda|}} \in \mathbb{Z}_{\geq 0}[t] \quad \forall \ k \in \mathbb{Z}_{\geq 0} \tag{Hag$^\prime(\lambda,\mu$)} 
		\end{equation*}
	\end{conjb}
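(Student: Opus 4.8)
The plan is to establish exactly what the sentence preceding the statement promises: since Conjecture B is itself open, the thing to prove is that Hag$'(\lambda,\mu)$, taken over all pairs $(\lambda,\mu)$, is \emph{equivalent} to Haglund's Conjecture A over all pairs. The engine of the proof will be a single exact identity expressing $k_{\lambda,\mu}(q,t)$ as a Schur coefficient of an integral form Macdonald polynomial with $\lambda,\mu$ transposed and $q,t$ interchanged; specializing $q=t^k$ then matches the two positivity statements term by term.

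First I would recall two standard facts from \cite{Mac95}. The Macdonald involution $\omega_{q,t}$, determined by $\omega_{q,t}p_r=(-1)^{r-1}\tfrac{1-q^r}{1-t^r}p_r$, satisfies $\omega_{q,t}P_\lambda(q,t)=Q_{\lambda'}(t,q)$. Writing $J_\lambda=c_\lambda(q,t)P_\lambda=c'_\lambda(q,t)Q_\lambda$ with $c_\lambda(q,t)=\prod_{s\in\lambda}(1-q^{a(s)}t^{l(s)+1})$ and $c'_\lambda(q,t)=\prod_{s\in\lambda}(1-q^{a(s)+1}t^{l(s)})$, transposing the diagram swaps arms and legs and gives $c_\lambda(q,t)=c'_{\lambda'}(t,q)$, whence $\omega_{q,t}J_\lambda(q,t)=J_{\lambda'}(t,q)$. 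Second, let $\phi_{q,t}$ be the algebra endomorphism with $\phi_{q,t}p_r=\tfrac{1-q^r}{1-t^r}p_r$; it is self-adjoint for the Hall pairing and satisfies $\langle f,g\rangle_{q,t}=\langle \phi_{q,t}f,g\rangle$. Since $\omega_{q,t}=\phi_{q,t}\circ\omega$ for the classical involution $\omega$, and $\phi_{q,t},\omega$ are simultaneously diagonal on the power-sum basis, $\omega$ is an isometry for $\langle\,,\,\rangle_{q,t}$, and $\omega J_\mu(q,t)=\phi_{q,t}^{-1}J_{\mu'}(t,q)=\phi_{t,q}J_{\mu'}(t,q)$.

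Combining these, I would compute $k_{\lambda,\mu}(q,t)=\langle s_\lambda,J_\mu\rangle_{q,t}=\langle \omega s_\lambda,\omega J_\mu\rangle_{q,t}=\langle s_{\lambda'},\phi_{t,q}J_{\mu'}(t,q)\rangle_{q,t}$. Converting the deformed pairing to the Hall pairing and using $\phi_{q,t}\phi_{t,q}=\mathrm{id}$ to cancel the two scalings yields the key identity
\[
k_{\lambda,\mu}(q,t)=\langle J_{\mu'}(t,q),\,s_{\lambda'}\rangle .
\]
Specializing $q=t^k$ and renaming the remaining variable $t\mapsto q$, the right-hand side becomes $\langle J_{\mu'}(q,q^k),s_{\lambda'}\rangle$. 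Both sides vanish unless $|\lambda|=|\mu|$, in which case $|\lambda|=|\mu'|$, so the denominators agree and
\[
\frac{k_{\lambda,\mu}(t^k,t)}{(1-t)^{|\lambda|}}=\frac{\langle J_{\mu'}(q,q^k),s_{\lambda'}\rangle}{(1-q)^{|\mu'|}}.
\]
Thus Hag$'(\lambda,\mu)$ holds if and only if Hag$(\mu',\lambda')$ holds; as $(\lambda,\mu)$ ranges over all pairs so does $(\mu',\lambda')$, giving the equivalence of Conjectures A and B.

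The only real obstacle is bookkeeping rather than ideas: I would have to verify $c_\lambda(q,t)=c'_{\lambda'}(t,q)$ carefully from the arm/leg interchange, confirm that the classical $\omega$ is genuinely a $\langle\,,\,\rangle_{q,t}$-isometry (equivalently that $\phi_{q,t}$ commutes with $\omega$), and check that the substitution $q=t^k$ on the $k_{\lambda,\mu}$ side corresponds, after the single-variable renaming, exactly to the Macdonald arguments $(q,q^k)$ appearing in Hag. None of these steps requires anything beyond the involution and scalar-product formalism of \cite{Mac95}, which is why the equivalence is ``easily seen.''
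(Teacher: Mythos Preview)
Your argument is correct and is essentially the paper's own derivation. The paper obtains the identity $\langle J_{\lambda}(t,q),s_\mu\rangle=\langle J_{\lambda'}(q,t),s_{\mu'}\rangle_{q,t}$ (its equation labeled \emph{Jduality}) by combining $c_\lambda(q,t)=c'_{\lambda'}(t,q)$ with Macdonald's $K^{(2)}_{\lambda,\mu}(q,t)=(K^{(2)})^{-1}_{\mu',\lambda'}(t,q)$, and then reads off Hag$'(\lambda,\mu)\Leftrightarrow\text{Hag}(\mu',\lambda')$; you instead unpack the same Macdonald facts directly through $\omega_{q,t}=\phi_{q,t}\circ\omega$ to reach the identical identity $k_{\lambda,\mu}(q,t)=\langle J_{\mu'}(t,q),s_{\lambda'}\rangle$, so the two routes differ only in packaging.
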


	We also define another $q,t$-analogue $K^{(2)}_{\lambda,\mu}(q,t)$ of the Kostka numbers by: \begin{eqnarray}
		s_{\lambda} = \sum_{\mu \in Par}K^{(2)}_{\lambda,\mu}(q,t)P_{\mu}(q,t) 
		& \forall \ \lambda \in Par 
	\end{eqnarray}
	$K^{(2)}_{\lambda,\mu}(q,t)$ coincides with $k_{\lambda,\mu}(q,t)$ upto a constant. Analogous to $K^{(1)}_{\lambda,\mu}(q,t)$ this is a rational function in $q,t$ and $K^{(2)}_{\lambda,\mu}(0,t)=K_{\lambda,\mu}(t)$, the Kostka-Foulkes polynomial. The matrix $(K^{(2)}_{\lambda,\mu})_{\lambda,\mu \, \vdash \, n}$ has the interesting property that \cite{Mac95} (Chapter VI, ($5.1^{\prime\prime}$)) \begin{eqnarray}
		K^{(2)}_{\lambda,\mu}(q,t) = (K^{(2)})^{-1}_{\mu',\lambda'}(t,q)
	\end{eqnarray}
	Since the entries of $(K^{(2)})^{-1}$ are the Schur coefficients of the $P_{\lambda}(q,t)$, finding $K^{(2)}$ is roughly equivalent to finding these.    
	
	We use Macdonald's description of $K^{(1)}_{\lambda,\mu}(q,t)$ and basic properties of $P_{\lambda}(q,t)$ to find some general reduction principles. In particular, we show that (\autoref{rowmult}) if for a pair of partitions $(\lambda,\mu)$, the $r$-th partial sum $\lambda_1+\ldots+ \lambda_r = \mu_1+\ldots + \mu_r$ then for $i=1,2$, \[
	(K^{(i)})^{\pm 1}_{\lambda,\ \mu} = (K^{(i)})^{\pm 1}_{\lambda^1,\ \mu^1}(K^{(i)})^{\pm 1}_{\lambda^2,\ \mu^2}
	\] where $\lambda^1 = (\lambda_1,\ldots,\lambda_r), \mu^1 = (\mu_1,\ldots,\mu_r), \lambda^2 = (\lambda_{r+1},\ldots), \mu^2 = (\mu_{r+1},\ldots)$. 
	A dual version of this is given in \autoref{colmult}, where instead of breaking the Young diagrams across a row, we break across a column. 
	
	Let $\lambda,\mu \in Par$ such that $\lambda \geq \mu$. We can always decompose $\lambda = (\lambda^1,\ldots,\lambda^r)$ and $\mu = (\mu^1,\ldots,\mu^r)$ where $\lambda^1,\ldots \lambda^r,\mu^1,\ldots,\mu^r$ are partitions such that for $i\in \{1,\ldots r\}$, $\lambda^i \geq \mu^i$ and for $i \in \{1,\ldots r-1\}$, $l(\lambda^i)=l(\mu^i)$. Then for $i=1,2$, \begin{eqnarray}
		(K^{(i)})^{\pm 1}_{\lambda,\mu}(q,t) = \prod_{j=1}^{r}(K^{(i)})^{\pm 1}_{\lambda^j,\mu^j}(q,t)
	\end{eqnarray}
	
	For $j\in \{1,\ldots, r-1\}$ each pair $(\lambda^j,\mu^j)$ contains a common rectangle of row length atleast $\lambda^{j+1}_1$. Proving the dual Haglund conjecture is equivalent to proving it for each pair $(\lambda^j,\mu^j)$ with these rectangles removed (\autoref{cor:rectangleinsert}). We can repeat this process to each pair till no more decomposition is possible.

	This allows us to only look at pairs of partitions $(\lambda,\mu)$ which are \textsl{irreducible} i.e, satisfying $\lambda \geq \mu$ and  $\lambda_1+\ldots+\lambda_i > \mu_1+\ldots+\mu_i$ for $1\leq i \leq \ell(\lambda)$. Berenshtein and Zelevinskii in \cite{BZ90} gave a criterion for a pair of partitions $(\lambda,\mu)$ to satisfy $K_{\lambda,\mu}=1$. This can be restated as \begin{theorem}\label{BZtheorem}
		Let $\lambda,\mu$ be an irreducible pair. Then $K_{\lambda,\mu} = 1$ if and only if either \begin{enumerate}
			\item $\lambda = (m^n)$ for some $m \in \mathbb{Z}_{\geq 1}$, $n \in \mathbb{Z}_{\geq 0 }$ and $l(\mu)=n+1$
			
			or
			\item $\lambda = (n)$ and $\mu \vdash n$, $\mu \neq \lambda$ for some $n\in \mathbb{Z}_{\geq 1}$  
		\end{enumerate} 
		
	\end{theorem}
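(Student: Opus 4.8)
The statement is exactly the Berenstein--Zelevinsky classification of multiplicity-one weight spaces \cite{BZ90}, specialized to Kostka numbers and rewritten under the irreducibility hypothesis, so the plan is to deduce it from \cite{BZ90}: I would recall their criterion in its combinatorial (Kostka) form --- a finite list of conditions on the pair $(\lambda,\mu)$ equivalent to $K_{\lambda,\mu}=1$ --- and verify that, once the strict inequalities $\lambda_1+\dots+\lambda_i>\mu_1+\dots+\mu_i$ for $1\le i\le\ell(\lambda)$ are imposed, the list reduces to the two cases stated. One guiding remark for this bookkeeping: irreducibility already forces $\mu_1<\lambda_1$, and, applying the inequality at $i=\ell(\lambda)$ (where $\lambda_1+\dots+\lambda_{\ell(\lambda)}=|\lambda|=|\mu|$), it forces $\ell(\mu)\ge\ell(\lambda)+1$; thus in case (1) the hypothesis $\ell(\mu)=n+1$ is precisely the borderline $\ell(\mu)=\ell(\lambda)+1$, and the theorem asserts that this borderline together with the single-row shape exhaust the irreducible multiplicity-one pairs.

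It is worth recording a self-contained proof of the implication $\Leftarrow$, which is short. If $\lambda=(n)$ there is a unique semistandard tableau of content $\mu$ --- the row with $\mu_i$ entries equal to $i$ --- so $K_{(n),\mu}=1$. If $\lambda=(m^n)$ and $\ell(\mu)=n+1$, then every semistandard tableau of shape $\lambda$ and content $\mu$ uses exactly the values $1,\dots,n+1$, so each of its $m$ columns is the increasing rearrangement of $\{1,\dots,n+1\}\setminus\{c_j\}$ for a single $c_j\in\{1,\dots,n+1\}$; the rows are weakly increasing if and only if $c_1\ge c_2\ge\dots\ge c_m$; and the number of columns with $c_j=v$ must equal $m-\mu_v$, which pins down the multiset $\{c_1,\dots,c_m\}$ and hence, once it is written in weakly decreasing order, the tableau itself. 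Therefore $K_{(m^n),\mu}=1$.

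The real content is the implication $\Rightarrow$, i.e. $K_{\lambda,\mu}\ge 2$ for every irreducible pair with $\lambda$ neither a single row nor a rectangle whose content has length $\ell(\lambda)+1$; this is also the step in which \cite{BZ90} does the work. The approach I would try for a direct proof is to peel off the largest part of $\mu$: grouping the semistandard tableaux of shape $\lambda$, content $\mu$, by the horizontal strip $\lambda/\nu$ carrying the entries equal to $\ell(\mu)$ gives $K_{\lambda,\mu}=\sum_\nu K_{\nu,\hat\mu}$ with $\hat\mu=(\mu_1,\dots,\mu_{\ell(\mu)-1})$, the sum over $\nu\subseteq\lambda$ with $\lambda/\nu$ a horizontal strip of size $\mu_{\ell(\mu)}$, and $K_{\nu,\hat\mu}\neq 0$ precisely when $\nu\ge\hat\mu$ in dominance. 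Since $\mu_{\ell(\mu)}\le\mu_1<\lambda_1$, a non-rectangular $\lambda$ has at least two such strips (take the lowest possible placement, or divert one box to a higher corner of $\lambda$); for a rectangle $\lambda=(m^n)$ with $\ell(\mu)\ge n+2$ this branching appears one peeling-step later instead. The main obstacle is that two strips do not suffice --- one must exhibit two whose complements $\nu$ \emph{both} still dominate $\hat\mu$, and controlling this dominance condition is the delicate part. I expect to resolve it by induction on $|\lambda|$: show the dominance-maximal complement always dominates $\hat\mu$, choose the second complement as close to it as the corners of $\lambda$ permit so that it too dominates $\hat\mu$, and check that the resulting smaller pair is again irreducible and not of type (1) or (2) (or is a base case), so the inductive hypothesis applies. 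If this case analysis proves unwieldy, the fallback is simply to invoke \cite{BZ90} for the forward direction.
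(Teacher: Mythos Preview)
Your approach matches the paper's: the paper does not prove \autoref{BZtheorem} at all but simply states it as a restatement of the criterion in \cite{BZ90}, so deferring the forward direction to that reference is exactly what the paper does. Your additional self-contained argument for the implication $\Leftarrow$ (the column-complement description of the unique tableau when $\lambda=(m^n)$ and $\ell(\mu)=n+1$) is correct and is more than the paper supplies; the inductive sketch for $\Rightarrow$ is honestly flagged as incomplete, and since the paper itself makes no attempt at a direct proof, citing \cite{BZ90} there is entirely in line with the paper's treatment.
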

	
	We find (\autoref{complement}) that if $\lambda,\mu$ are partitions contained in a rectangle $(m^{n+1})$ then 
	for $i=1,2$
	\begin{eqnarray}
		(K^{(i)})^{\pm 1}_{\lambda,\mu} = (K^{(i)})^{\pm 1}_{\lambda^c,\mu^c} 
	\end{eqnarray}    
	where $\lambda^c,\mu^c$ denote the complements of the partitions $\lambda,\mu$ inside the rectangle $(m^{n+1})$. Since the complement of the partition $(m^n)$ is $(m)$, this reduces all computation of $K^{(2)}_{\lambda,\mu}$ when $K_{\lambda,\mu}=1$ to the case where $\lambda$ is a single row. 
	
	In this case we calculate $K^{(2)}_{\lambda,\mu}$ by use of the Cauchy identity and Macdonald's principal specialization formula (\autoref{k_n_mu})
	
	Finally, in \autoref{sec:whenKlammu=1} we finish proving the dual version of Haglund's conjecture when $K_{\lambda,\mu}=1$ by analyzing the normalization factors for the integral form Macdonald polynomials. By exploiting duality of Macdonald polynomials (\autoref{Kdual}) we get the proof when $K_{\mu',\lambda'}=1$.
	
	Our reduction principles may also be applied to instances where the final reduction is to pairs $(\lambda,\mu)$ of the form studied by Yoo. Therefore explicit formulas for $k_{\lambda,\mu}(q,t)$ can be obtained in a wider set of cases.

	
	An interesting feauture of our formulas and many of Yoo's formulas is that $K^{(2)}_{\lambda,\mu}(q,t)$ can be written as a sum of $K_{\lambda,\mu}$ many terms, where each term is a product with factors of the form $(t^a-q^bt^c)$ in the numerator and $(1-q^dt^e)$ in the denominator with $a,b,c,d,e \in \mathbb{Z}_{\geq 0}$. This is especially striking in the $K_{\mu',\lambda'}=1$ case where $K_{\lambda,\mu}$ could be $>1$.  

	\subsection*{Acknowledgments} We would like to thank R. Venkatesh for organizing the Workshop on Macdonald Polynomials 2021. We are very grateful to A. Ram for his inspiring lectures and discussions. Finally, a lot of thanks to S. Viswanath for numerous discussions, going through the whole paper and giving helpful suggestions.          
	
	\section{Definitions}
	
	We first review some basic definitions in the theory of symmetric functions. The main reference for this section is \cite{Mac95}.
	
	\subsection{Partitions}
	By a partition we mean a sequence $\lambda = (\lambda_1,\lambda_2,...)$ of non-negative integers which is weakly decreasing and has a finite sum. By $Par$ we denote the set of all partitions. A partition $\lambda$ may be realized as a set of boxes upper left justified arranged in rows so that there are $\lambda_i$ boxes in row $i$, this is called the Young diagram of $\lambda$. For a partition $\lambda$ the conjugate partition $\lambda'$ is the partition obtained by interchanging the rows and columns in the diagram of $\lambda$. The length of a partition $\lambda$ is $\ell(\lambda) = \lambda'_1$ and the size is $|\lambda|=\sum \lambda_i$
	
	The set of partitions is partially ordered by the dominance order defined by \begin{eqnarray}
		\lambda \geq \mu \iff  \lambda_1+...+\lambda_i \geq \mu_1+...+\mu_i \ \forall \ i\geq 1, \text{and} \ |\lambda|=|\mu|
	\end{eqnarray}
	
	For a box $x=(r,c)$ in the diagram of $\lambda$, where $r,c$ are the row and column numbers of $x$ with row numbering starting from 1 in top row and column numbering starting from 1 in the leftmost column, we write $x\in \lambda$ and define the arm length $a_{\lambda}(x)$, coarm length $a'_\lambda(x)$, leg length $l_{\lambda}(x)$ and the coleg length $l'_{\lambda}(x)$ of $x$ by: \begin{eqnarray}
		a_{\lambda}(x) = \lambda_r - c & a'_{\lambda}(x) = c-1 \\ l_{\lambda}(x) = \lambda'_c - r & l'_{\lambda}(x) = r-1
	\end{eqnarray}
	
	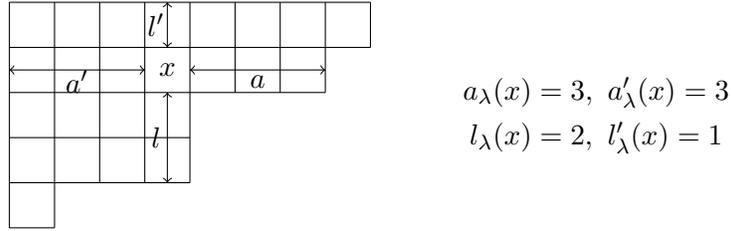
\begin{figure}[h]
		\centering
		\begin{tikzpicture}[yscale=-0.6, xscale=0.6]
			\draw (0,0) -- (8,0);
			\draw (0,1) -- (8,1);
			\draw (0,2) -- (7,2);
			\draw (0,3) -- (4,3);
			\draw (0,4) -- (4,4);
			\draw (0,5) -- (1,5);
			
			\draw (0,0) -- (0,5);
			\draw (1,0) -- (1,5);
			\draw (2,0) -- (2,4);
			\draw (3,0) -- (3,4);
			\draw (4,0) -- (4,4);
			\draw (5,0) -- (5,2);
			\draw (6,0) -- (6,2);
			\draw (7,0) -- (7,2);
			\draw (8,0) -- (8,1);			
			\draw[<->] (4,1.5) -- (7,1.5);
			\draw (5.5,1.75) node{$a$} ;
			
			\draw[<->] (3,1.5) -- (0,1.5);
			\draw (1.5,1.75) node{$a'$} ;
			
			\draw[<->] (3.5,1) -- (3.5,0);
			\draw (3.25,0.5) node{$l'$} ;
			
			\draw[<->] (3.5,2) -- (3.5,4);
			\draw (3.25,3) node{$l$} ;

			\draw (3.5,1.5) node{$x$};
			
			\draw (13,2) node{$a_\lambda(x)=3, \ a'_\lambda(x)=3$}; 
			\draw (13,3) node{$l_\lambda(x)=2, \ l'_\lambda(x)=1$};
		\end{tikzpicture}
		\caption{Young diagram for $\lambda = (8, 7, 4, 4,  1)$ and arm, coarm, leg, coleg lengths of $x = (2,4)$}
		\label{fig:Youngdiagram}
	\end{figure}

	When the partition is clear, we drop $\lambda$ from the notation and simply write $a(x),a'(x),l(x),l'(x)$.
	
	The content of a box $x\in \lambda$ is $c(x) = a'(x)-l'(x)$. The hook length of the box $x$ is $h(x)=a(x)+1+l(x)$. For a partition $\lambda$, define $n(\lambda) = \sum_{x\in \lambda}l'(x)$

	\subsection{Symmetric Functions}
	Let $q$ and $t$ be complex variables. We work in the ring $\Lambda$ of symmetric functions in infinitely many variables $x_1,x_2,...$ with coefficients from $\mathbb{C}(q,t)$ 
	
	The space of symmetric functions has a few known bases indexed by $\lambda \in Par$, such as the monomial basis $(m_{\lambda})$, the elementary basis $(e_{\lambda})$, the homogeneous basis $(h_{\lambda})$, the power sum basis $(p_{\lambda})$, and the Schur basis $(s_{\lambda})$. Their definitions can be found in \cite{Mac95}. 
	
	For a partition $\lambda$ and for $i\geq 1$ let $m_{i}(\lambda)$ be the number of parts of $\lambda$ equal to $i$ and  $z_{\lambda}= \prod_{i=1}^{\ell(\lambda)}i^{m_i(\lambda)}m_{i}(\lambda)!$. The Hall scalar product on $\Lambda$ denoted by $\langle \ , \ \rangle$ is defined by $\langle p_{\lambda} , p_{\mu} \rangle = z_{\lambda} \delta_{\lambda,\mu}$. We also have $\langle s_{\lambda} , s_{\mu} \rangle = \delta_{\lambda,\mu}$, $\langle h_{\lambda} , m_{\mu} \rangle = \delta_{\lambda,\mu}$. 
	
	\subsection{Plethysm}
	
	The power sums $p_k(X)=\sum_{i \geq 0 } x_i^k$, $k \geq 1$ form an algebraically independent generating set for $\Lambda$ over $\mathbb{C}(q,t)$. For a formal series of rational functions $E=E(z_1,z_2,\ldots)$ we denote by $p_k[E]$ the plethystic substitution of $E$ into $p_k$, defined to be $E(z_1^k,z_2^k,\ldots)$. In other words, $p_k[E]$ is the result of replacing each variable in $E$ by it's $k$-th power, and the constants are kept as it is. Since any symmetric function $f\in \Lambda$ is a polynomial in the $p_k$, we can define $f[E]$ as the unique $\mathbb{C}$-algebra homomorphism extending $p_k \mapsto p_k[E]$. Note that, we are not extending as $\mathbb{C}(q,t)$-algebra homomorphism, i.e, $q,t$ are also treated as variables. When using plethysm, we write alphabets $X = \{x_1,x_2,...\}$, $Y=\{y_1,y_2,...\}$ as $X= \sum x_i$, $Y = \sum y_i$. So $p_k[X]=\sum x_i^k = p_k(X)$ for all $k$, and by extension, $f[X] = f(X)$ for any symmetric function $f$.  
	
	In particular, \begin{eqnarray}
		p_{k}\bigg[ X \dfrac{1-q}{1-t}\bigg] = p_k(X)\dfrac{1-q^k}{1-t^k} \\ p_k\bigg[\dfrac{1-q}{1-t}\bigg] = \dfrac{1-q^k}{1-t^k}
	\end{eqnarray}
	
	For more on plethysm see \cite{Hai99}, \cite{Hag06}.
	
	\subsection{Tableaux}
	A semistandard Young tableau or a tableau $T$ of shape $\lambda$ is a filling of the diagram $\lambda$ with entries from $1,2,...$ such that the entries are weakly increasing left to right along the rows and strictly increasing top to bottom along columns. If for each $i\geq 1$ there are $\mu_i$ many $i$'s in $T$ then $T$ is said to have content $\mu=(\mu_1,\mu_2,...)$.  We denote the set of tableaux of shape $\lambda$, content $\mu$ by $SSYT(\lambda,\mu)$. The cardinality of this set is the Kostka number $K_{\lambda,\mu}$. 
	
	We have \begin{eqnarray}
		s_{\lambda} = \sum_{\mu} K_{\lambda,\mu} m_{\mu}
	\end{eqnarray}
	
	By a matrix indexed by partitions we always mean that the rows and columns are indexed by partitions, and the indexing is compatible with dominance ordering i.e, the partitions are listed in sequence such that if $\lambda \geq \mu$ in dominance order then $\lambda$ precedes $\mu$ in the sequence and if $|\lambda|<|\mu|$ then $\lambda$ precedes $\mu$. These are infinite matrices. For a matrix $M$ indexed by partitions, we call it upper-unitriangular if $M_{\lambda,\mu}=0$ unless $\mu \leq \lambda$ and $M_{\lambda,\lambda}=1$. The set of upper unitriangular matrices form a group.   
	
	We will denote by $K$ the matrix indexed by partitions whose $(\lambda,\mu)$th entry is $K_{\lambda,\mu}$. Then $K$ is upper-unitriangular.

	\subsection{Macdonald Symmetric Functions}
	
	The scalar product $\langle \ , \ \rangle_{q,t}$ on $\Lambda$ is defined by setting  \begin{eqnarray}
		\langle f \,, g \rangle_{q,t} = \bigg\langle f[X] ,  g\bigg[X\dfrac{1-q}{1-t}\bigg] \bigg\rangle  
	\end{eqnarray} where $\langle \ , \ \rangle$ on the right hand side is the Hall scalar product.
	
	The Macdonald $P$-functions $(P_\lambda[X;q,t])_{\lambda \in Par}$ are characterized by the following two properties: \begin{eqnarray}
		P_{\lambda}[X;q,t] - m_{\lambda}[X] \in \sum_{\mu < \lambda}\mathbb{C}(q,t)m_\mu[X]  \\ \langle P_{\lambda}[X;q,t], P_\mu[X;q,t] \rangle_{q,t} = 0  \text{ if } \lambda \neq \mu
	\end{eqnarray} 
	where $\leq$ is the dominance order on partitions. 
	
	When the variables are clear, we may drop the $X$ from the notation and just write $P_{\lambda}(q,t)$. 
	
	Let $(Q_{\lambda}(q,t))$  denote the dual basis to $(P_{\lambda}(q,t))$ under the $q,t$ scalar product $\langle \ , \ \rangle_{q,t}$. For a partition $\mu$, let \begin{eqnarray}
		c_{\mu}(q,t) & = &\prod_{x\in \mu}(1-q^{a(x)} \; t^{l(x)+1}) \\ c'_{\mu}(q,t) & = & \prod_{x\in \mu}(1-q^{a(x)+1}\; t^{l(x)}) \\  b_{\mu}(q,t) & = & \dfrac{c_{\mu}(q,t)}{c'_{\mu}(q,t)} 
	\end{eqnarray} 
	
	Then we have \begin{theorem}\cite[Chapter VI (6.19)]{Mac95} 
		\[ Q_{\mu}(q,t) = b_{\mu}(q,t)P_{\mu}(q,t) \]
	\end{theorem}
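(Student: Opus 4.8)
The plan is to reduce the statement to Macdonald's norm formula and then sketch the Pieri-rule proof of that formula. Since the defining characterization of the $P_\lambda(q,t)$ includes $\langle P_\lambda(q,t),P_\mu(q,t)\rangle_{q,t}=0$ for $\lambda\neq\mu$, the family $(P_\lambda(q,t))$ is an orthogonal basis of $\Lambda$, so its dual basis is forced to be the rescaling $Q_\mu(q,t)=\langle P_\mu(q,t),P_\mu(q,t)\rangle_{q,t}^{-1}\,P_\mu(q,t)$. Hence the claim $Q_\mu=b_\mu P_\mu$ is equivalent to the evaluation $\langle P_\mu(q,t),P_\mu(q,t)\rangle_{q,t}=b_\mu(q,t)^{-1}=c'_\mu(q,t)/c_\mu(q,t)$, that is, to $\langle P_\mu,P_\mu\rangle_{q,t}=\prod_{x\in\mu}\frac{1-q^{a(x)+1}t^{l(x)}}{1-q^{a(x)}t^{l(x)+1}}$. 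Everything therefore reduces to computing this squared norm, which is \cite[Chapter VI (6.19)]{Mac95}; what follows is a sketch of that computation.

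First I would establish the Pieri rule. Set $g_r:=Q_{(r)}(q,t)$, characterized by $\prod_i\frac{(tx_iu;q)_\infty}{(x_iu;q)_\infty}=\sum_{r\ge 0}g_r[X]\,u^r$ via the $q$-binomial theorem, so that the Cauchy kernel factors as $\prod_{i,j}\frac{(tx_iy_j;q)_\infty}{(x_iy_j;q)_\infty}=\sum_\lambda P_\lambda[X]Q_\lambda[Y]$. Using the triangularity $P_\lambda-m_\lambda\in\sum_{\mu<\lambda}\mathbb{C}(q,t)m_\mu$ together with the explicit monomial shape of $g_r$, one shows $g_r\,P_\mu=\sum_\lambda\varphi_{\lambda/\mu}P_\lambda$ with the sum over $\lambda\supseteq\mu$ such that $\lambda/\mu$ is a horizontal $r$-strip, and likewise $g_r\,Q_\mu=\sum_\lambda\psi_{\lambda/\mu}Q_\lambda$; moreover $\varphi_{\lambda/\mu}$ and $\psi_{\lambda/\mu}$ admit closed product formulas, each a product of factors of the form $\frac{1-q^at^{l+1}}{1-q^{a+1}t^l}$ over the boxes in the rows (equivalently columns) that meet the strip. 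Next, the symmetry $X\leftrightarrow Y$ of the Cauchy kernel shows that multiplication by $g_r$ and its $\langle\cdot,\cdot\rangle_{q,t}$-adjoint $g_r^{\perp}$ are related by $g_r^{\perp}Q_\lambda=\sum_{\mu}\varphi_{\lambda/\mu}Q_\mu$ while $g_r^{\perp}P_\lambda=\sum_{\mu}\psi_{\lambda/\mu}P_\mu$; substituting $Q_\nu=\langle P_\nu,P_\nu\rangle_{q,t}^{-1}P_\nu$ into the first expansion and comparing coefficients with the second gives, for every single-box pair $\lambda/\mu$,
\[ \frac{\langle P_\mu,P_\mu\rangle_{q,t}}{\langle P_\lambda,P_\lambda\rangle_{q,t}}=\frac{\varphi_{\lambda/\mu}}{\psi_{\lambda/\mu}}. \]
Since $\langle P_\emptyset,P_\emptyset\rangle_{q,t}=1$, iterating this relation up any chain $\emptyset=\mu^{(0)}\subset\mu^{(1)}\subset\cdots\subset\mu^{(|\lambda|)}=\lambda$ of single-box additions determines $\langle P_\lambda,P_\lambda\rangle_{q,t}$ completely in terms of the Pieri coefficients.

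The remaining step, and the main obstacle, is purely combinatorial: one must check that the product formulas for $\varphi_{\lambda/\mu}$ and $\psi_{\lambda/\mu}$ telescope, in the precise sense that $\varphi_{\lambda/\mu}/\psi_{\lambda/\mu}=b_\lambda(q,t)/b_\mu(q,t)$ whenever $\lambda/\mu$ is a single box. Granting this, the displayed recursion collapses to $\langle P_\lambda,P_\lambda\rangle_{q,t}=b_\lambda(q,t)^{-1}$, which is exactly $Q_\lambda=b_\lambda P_\lambda$. Verifying the telescoping identity amounts to tracking how the arm and leg lengths $a(x),l(x)$ of the boxes in a common row and a common column change when a single box is added to $\mu$; this is where the entire content of \cite[Chapter VI (6.19)]{Mac95} is concentrated, the other steps being bookkeeping around the Cauchy kernel and the defining properties of $P_\lambda$. (An alternative route that avoids the Pieri rules is to specialize the Cauchy kernel at $Y=(1,t,t^2,\dots)$, where the product telescopes, and to combine Macdonald's principal-specialization formula for $P_\lambda(1,t,\dots,t^{n-1};q,t)$ with a $q$-binomial expansion to read off the norm directly; the combinatorial cost is comparable.)
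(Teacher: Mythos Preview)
The paper does not give its own proof of this theorem; it simply quotes the identity as \cite[Chapter VI (6.19)]{Mac95} and uses it as a black box in the normalization $J_\mu=c_\mu P_\mu=c'_\mu Q_\mu$. So there is nothing in the paper to compare your argument against.

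That said, what you have written is a faithful outline of Macdonald's own proof of (6.19): reduce $Q_\mu=b_\mu P_\mu$ to the norm evaluation $\langle P_\mu,P_\mu\rangle_{q,t}=b_\mu^{-1}$, establish the Pieri rules for multiplication by $g_r$ on both the $P$- and $Q$-bases, use self-adjointness to get the ratio $\langle P_\mu,P_\mu\rangle_{q,t}/\langle P_\lambda,P_\lambda\rangle_{q,t}=\varphi_{\lambda/\mu}/\psi_{\lambda/\mu}$, and then verify the telescoping $\varphi_{\lambda/\mu}/\psi_{\lambda/\mu}=b_\lambda/b_\mu$ by tracking arm and leg changes under a single-box addition. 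Your sketch is correct, and your parenthetical alternative via principal specialization is also a legitimate route (closer to \cite[Chapter VI (6.11$'$)]{Mac95}). One small quibble: you phrase the Pieri step as ``using the triangularity $P_\lambda-m_\lambda\in\sum_{\mu<\lambda}\mathbb{C}(q,t)m_\mu$ together with the explicit monomial shape of $g_r$'' to obtain the closed product formulas for $\varphi_{\lambda/\mu}$ and $\psi_{\lambda/\mu}$; triangularity alone only gives that $g_rP_\mu$ expands in $P_\lambda$'s with $\lambda\ge\mu$, not the horizontal-strip support or the explicit products. Macdonald's actual derivation of the Pieri coefficients goes through the operators $D_n^r$ (or equivalently through the branching argument in \S6), so if you were filling in this sketch you would need to invoke that machinery rather than bare triangularity.
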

	
	The \emph{integral form Macdonald polynomials} are  \begin{eqnarray}\label{eqn:Jdefn}
		J_{\mu}(q,t) = c_{\mu}(q,t) P_{\mu}(q,t) = c'_{\mu}(q,t) Q_{\mu}(q,t)
	\end{eqnarray} The monomial coefficients of $J_{\mu}(q,t)$ are in $\mathbb{Z}[q,t]$.

	\subsection{$q,t$-Kostka functions}
	We define two $q,t$-analogues of the Kostka numbers. \begin{definition}\label{defnKi}
		For $\lambda,\mu\in Par$, let $K^{(1)}_{\lambda,\mu}(q,t)$ and $K^{(2)}_{\lambda,\mu}(q,t)$ be defined as the change of basis coefficients: \begin{eqnarray}
			P_{\lambda}(q,t) = \sum_{\mu}K^{(1)}_{\lambda,\mu}(q,t)\; m_{\mu} \\ s_{\lambda} = \sum_{\mu}K^{(2)}_{\lambda,\mu}(q,t)\; P_{\mu}(q,t) 
		\end{eqnarray}
	\end{definition}
	
	In other words, $K^{(1)}_{\lambda,\mu}(q,t) = \langle P_{\lambda}(q,t) , h_\mu \rangle$ and $K^{(2)}_{\lambda,\mu}(q,t) = \langle s_{\lambda} , Q_{\mu} \rangle_{q,t}$ 
	Note that both the change of basis matrices are upper- unitriangular, i.e, for $i\in \{1,2\}$, $K^{(i)}_{\lambda,\mu}(q,t) = 0$ unless $\mu \leq \lambda$ and for all $\lambda \in Par$, $K^{(i)}_{\lambda,\lambda}(q,t)=1$. For $K^{(1)}$ this follows from the definition of Macdonald polynomials and for $K^{(2)}$ this is because \begin{eqnarray}\label{K=K2K1}
		K = K^{(2)}K^{(1)}
	\end{eqnarray} where $K$ is the matrix of Kostka numbers, which is known to be upper-unitriangular.
	
	\subsection{Macdonald's formula for $K^{(1)}_{\lambda,\mu}(q,t)$}
	
	Macdonald gave a formula for $K^{(1)}_{\lambda,\mu}(q,t)$ in \cite{Mac95} which we now describe. 
	For two partitions $\lambda,\mu$ with $\mu_i \leq \lambda_i$ for all $i$, $\lambda/\mu$ is called a horizontal strip if $\lambda'_j-\mu'_j \in \{0,1\}$ for all $j\in \mathbb{Z}_{>0}$. 
	If $\lambda/\mu$ is a horizontal strip, then define \begin{eqnarray}
		\psi_{\lambda/\mu}(q,t)= \prod_{\substack{x=(r,c)\in \mu \\ \lambda_r \neq \mu_r \\ \lambda'_c = \mu'_c  } }\dfrac{(1-q^{a_{\mu}(x)}\; t^{\; l_{\mu}(x)+1})}{(1-q^{a_{\mu}(x)+1}\; t^{\; l_{\mu}(x)})}\dfrac{(1-q^{a_{\lambda}(x)+1}\; t^{\; l_{\lambda}(x)})}{(1-q^{a_{\lambda}(x)}\; t^{\; l_{\lambda}(x)+1})} 
	\end{eqnarray} 
	
	Given a tableau $T$, let $T_{\leq i}$ denote the shape obtained by the boxes with content $1,...,i$. In a tableau $T$, $T_{\leq i}/T_{\leq (i-1)}$ is by definition horizontal strip for each $i$. For a tableau $T$, define \begin{eqnarray}
		\psi_T = \prod_{i \geq 1}\psi_{T_{\leq i}/T_{\leq ( i-1)}}
	\end{eqnarray}   
	
	Then we have \begin{theorem}\cite[Chapter VI, ($7.13^\prime$)]{Mac95} For partitions $\lambda,\mu$,
		\begin{eqnarray}\label{eq:K1=sumpsi}
			K^{(1)}_{\lambda,\mu}(q,t) = \sum_{T\in SSYT(\lambda,\mu)}\psi_T(q,t)
		\end{eqnarray}
	\end{theorem}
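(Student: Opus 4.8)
Since this is Macdonald's combinatorial (``tableau'') formula, in practice one cites \cite[Chapter~VI, ($7.13^\prime$)]{Mac95}; the route I would take to reprove it is as follows. The engine is the one-variable \emph{branching rule} for the Macdonald $P$-functions: for every $n\ge1$,
\begin{equation*}
  P_\lambda(x_1,\dots,x_n;q,t)=\sum_{\mu}\psi_{\lambda/\mu}(q,t)\,x_n^{\,|\lambda|-|\mu|}\,P_\mu(x_1,\dots,x_{n-1};q,t),
\end{equation*}
the sum being over partitions $\mu\subseteq\lambda$ with $\lambda/\mu$ a horizontal strip (so in particular $\psi_{\lambda/\lambda}=\psi_{\lambda/\emptyset}=1$, matching the empty-product cases of the displayed formula for $\psi_{\lambda/\mu}$). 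I would obtain it from the Pieri rule $g_r\,Q_\mu=\sum_{\nu}\psi_{\nu/\mu}(q,t)\,Q_\nu$ for the one-row function $g_r:=Q_{(r)}(q,t)$ (the sum over $\nu$ with $\nu/\mu$ a horizontal $r$-strip). Indeed, by the standard duality between multiplication and the coproduct on $\Lambda$ with respect to $\langle\,,\,\rangle_{q,t}$ --- concretely, by splitting a single variable off the Cauchy kernel $\prod_{i,j}(tx_iy_j;q)_\infty/(x_iy_j;q)_\infty=\sum_\lambda P_\lambda(x)Q_\lambda(y)$, so that one factor becomes $\sum_{r\ge0}g_r(y)\,x_n^r$ --- the Pieri rule is equivalent to the statement that the one-variable skew function $P_{\lambda/\mu}(x_n)$ equals $\psi_{\lambda/\mu}\,x_n^{|\lambda|-|\mu|}$ when $\lambda/\mu$ is a horizontal strip and vanishes otherwise; substituting this into $P_\lambda(x_1,\dots,x_n)=\sum_\mu P_{\lambda/\mu}(x_n)\,P_\mu(x_1,\dots,x_{n-1})$ is exactly the branching rule.

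Granting the branching rule, one inducts on the number of variables: applying it to $P_\lambda(x_1,\dots,x_n)$, then to each $P_\mu(x_1,\dots,x_{n-1})$ that occurs, and iterating down to $P_\emptyset=1$, gives
\begin{equation*}
  P_\lambda(x_1,\dots,x_n;q,t)=\sum_{T}\psi_T(q,t)\,x^T,\qquad \psi_T=\prod_{i\ge1}\psi_{T_{\le i}/T_{\le i-1}},
\end{equation*}
where $T$ runs over semistandard Young tableaux of shape $\lambda$ with entries in $\{1,\dots,n\}$ and $x^T=\prod_i x_i^{(\text{number of }i\text{'s in }T)}$. The only combinatorial observation needed is that a chain of horizontal strips $\emptyset=\nu^{(0)}\subseteq\nu^{(1)}\subseteq\dots\subseteq\nu^{(n)}=\lambda$ is precisely the datum of such a tableau $T$, via $\nu^{(i)}=T_{\le i}$, and that the product of branching coefficients accumulated along the chain is exactly $\psi_T$, while the accumulated monomial is $x^T$. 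The identity holds for every $n$ and is stable, hence passes to $\Lambda$. Finally, since $P_\lambda$ is symmetric, the coefficient of $m_\mu$ equals the coefficient of the single monomial $x_1^{\mu_1}x_2^{\mu_2}\cdots$, namely $\sum_T\psi_T$ over tableaux of shape $\lambda$ with exactly $\mu_i$ entries equal to $i$ for each $i$, i.e.\ over $T\in SSYT(\lambda,\mu)$; this is $K^{(1)}_{\lambda,\mu}(q,t)=\sum_{T\in SSYT(\lambda,\mu)}\psi_T(q,t)$.

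The real content --- and the main obstacle --- is the Pieri rule with the exact coefficients $\psi_{\nu/\mu}$ (equivalently, the explicit one-variable value of the skew $P$-function). Macdonald proves it by computing the scalar products $\langle g_r\,Q_\mu,P_\lambda\rangle_{q,t}$ through the action of his commuting difference operators, of which the $P_\lambda$ are the joint eigenfunctions, together with an induction on the dominance order and a careful identification of the resulting rational functions with the products $\psi_{\nu/\mu}$. That computation, with its attendant care about the normalization factors $b_\mu,c_\mu,c_\mu'$, is the step I expect to be longest and most delicate; everything downstream of the branching rule is routine bookkeeping.
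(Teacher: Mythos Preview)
The paper does not supply its own proof of this theorem; it merely cites \cite[Chapter~VI, ($7.13^\prime$)]{Mac95} as a known result. Your sketch is correct and is precisely the route Macdonald takes in that reference: Pieri rule for $g_r Q_\mu$ $\Rightarrow$ one-variable branching rule $P_\lambda(x_1,\dots,x_n)=\sum_\mu \psi_{\lambda/\mu}\,x_n^{|\lambda|-|\mu|}P_\mu(x_1,\dots,x_{n-1})$ $\Rightarrow$ iterate to obtain the tableau expansion $P_\lambda=\sum_T\psi_T\,x^T$ $\Rightarrow$ read off the coefficient of $m_\mu$ as the coefficient of $x_1^{\mu_1}x_2^{\mu_2}\cdots$. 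Your identification of the Pieri computation as the substantive step is accurate; the rest is indeed bookkeeping.
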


	\subsection{Specializations}\label{specializations}
	\begin{enumerate}
		\item\label{KostkaFoulkesDefn} At $q=0$, $K^{(2)}_{\lambda,\mu}(0,t)$ is the Kostka-Foulkes polynomial $K_{\lambda,\mu}(t)$. It is a polynomial in $t$ with non-negative integer coefficients. A combinatorial formula for the Kostka-Foulkes polynomial was given by Lascoux-Schutzenberger.  There is a function $ch : SSYT(\lambda,\mu) \to \mathbb{Z}_{\geq 0}$ called charge, such that \begin{eqnarray}
			K_{\lambda,\mu}(t) = \sum_{T\in SSYT(\lambda,\mu)} t^{ch(T)}
		\end{eqnarray}
		
		\item At $t=1$, $K^{(2)}_{\lambda,\mu}(q,1) = K_{\lambda,\mu}$ for arbitrary $q$.
		
		\item At $q=1$, the matrix $K^{(2)}(1,t) = J(K^{-1})^{tr}J $, where $J$ is the matrix indexed by partitions, with $J_{\lambda,\mu} = \delta_{\lambda,\mu'}$. So the matrix $K^{(2)}$ contains information of both the matrix $K$ and its inverse. A general version is \autoref{Kdual} below. 
		
	\end{enumerate}
	
	\subsection{Duality}
	
	By $(K^{(i)})^{-1}_{\lambda,\mu}$ we mean taking the $\lambda,\mu$th coordinate of the inverse matrix.

	The following lemma says that expanding the Macdonald functions in the Schur basis is roughly equivalent to its inverse problem, expanding the Schur functions in the Macdonald basis.
	
	\begin{lemma}\cite[Chapter VI, ($5.1^{\prime\prime}$)]{Mac95}\label{Kdual}
		\begin{eqnarray}\label{eqnKdual}
			K^{(2)}_{\lambda,\mu}(q,t) = (K^{(2)})^{-1}_{\mu',\lambda'}(t,q)
		\end{eqnarray}
	\end{lemma}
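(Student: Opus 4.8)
The plan is to work with the duality built into the $(q,t)$-scalar product itself. Recall from the defining properties that $\langle P_\lambda(q,t), P_\mu(q,t)\rangle_{q,t} = 0$ for $\lambda\neq\mu$ and $\langle P_\lambda(q,t), Q_\mu(q,t)\rangle_{q,t} = \delta_{\lambda\mu}$, so $(Q_\mu(q,t))$ is the dual basis of $(P_\lambda(q,t))$. The key external ingredient is Macdonald's duality: the involution $\omega_{q,t}:\Lambda\to\Lambda$ (with $\omega_{q,t}p_r = (-1)^{r-1}\tfrac{1-q^r}{1-t^r}p_r$) satisfies $\omega_{q,t}P_\lambda(q,t) = Q_{\lambda'}(t,q)$ and $\omega_{q,t}Q_\lambda(q,t) = P_{\lambda'}(t,q)$, together with $\omega(s_\lambda) = s_{\lambda'}$ for the $q$-independent involution $\omega$, and the adjunction formula $\langle \omega_{t,q}f, g\rangle_{q,t} = \langle \omega f, g\rangle$. (These are exactly the statements in the commented-out block of the excerpt; I would first un-comment and retain that $\omega_{q,t}$ subsection so the proof has them available.)

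First I would unwind the definitions: by \autoref{defnKi} and the discussion after it, $K^{(2)}_{\lambda,\mu}(q,t) = \langle s_\lambda, Q_\mu(q,t)\rangle_{q,t}$, since $(Q_\mu)$ is dual to $(P_\mu)$ under $\langle\,,\,\rangle_{q,t}$. On the other side, writing the expansion $P_{\lambda'}(t,q) = \sum_\nu K^{(2)}_{\lambda',\nu}(t,q)\,P_\nu(t,q)$ and pairing against $Q_{\mu'}(t,q)$ gives $(K^{(2)})^{-1}_{\mu',\lambda'}(t,q) = \langle Q_{\lambda'}(t,q), \text{(something)}\rangle$; more cleanly, the inverse matrix of $K^{(2)}(t,q)$ has entries expressing $P$ in terms of $s$, namely $P_{\nu}(t,q) = \sum_\rho (K^{(2)})^{-1}_{\nu,\rho}(t,q)\, s_\rho$, so $(K^{(2)})^{-1}_{\mu',\lambda'}(t,q) = \langle P_{\mu'}(t,q), s_{\lambda'}\rangle = \langle s_{\lambda'}, P_{\mu'}(t,q)\rangle$ (the Hall product is symmetric).

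The heart of the argument is then the chain of identities
\begin{align*}
K^{(2)}_{\lambda,\mu}(q,t)
&= \langle s_\lambda, Q_\mu(q,t)\rangle_{q,t}
= \langle \omega_{t,q}(\omega s_\lambda), Q_\mu(q,t)\rangle_{q,t} \\
&= \langle \omega s_\lambda, Q_\mu(q,t)\rangle_{?}
\end{align*}
— here I must be careful about which scalar product the adjunction moves me into. The correct route is: apply $\langle \omega_{t,q}f, g\rangle_{q,t} = \langle \omega f, g\rangle$ with $f$ chosen so that $\omega_{t,q}f = s_\lambda$, i.e.\ $f = \omega_{t,q}^{-1}s_\lambda$; but since we ultimately want to swap $q\leftrightarrow t$, the clean statement to use is that $\omega_{t,q}$ is an isometry from $\langle\,,\,\rangle_{t,q}$-structures to $\langle\,,\,\rangle_{q,t}$-structures in the sense that $\langle \omega_{t,q}P_\nu(t,q), \omega_{t,q}Q_\rho(t,q)\rangle_{q,t} = \langle P_\nu(t,q), Q_\rho(t,q)\rangle_{t,q} = \delta_{\nu\rho}$, which is immediate since $\omega_{t,q}P_\nu(t,q) = Q_{\nu'}(q,t)$ and $\omega_{t,q}Q_\rho(t,q) = P_{\rho'}(q,t)$. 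Thus $\omega_{t,q}$ carries the dual bases $(P_\nu(t,q), Q_\rho(t,q))$ to the dual bases $(Q_{\nu'}(q,t), P_{\rho'}(q,t))$. Now expand: applying $\omega_{t,q}$ to $s_{\lambda'} = \omega s_\lambda = \sum_\nu \langle s_{\lambda'}, P_{\mu'}(t,q)\rangle\,(\text{dual coords})$ — concretely, write $\omega s_\lambda = s_{\lambda'} = \sum_\nu c_\nu\, Q_\nu(t,q)$ where $c_\nu = \langle s_{\lambda'}, P_\nu(t,q)\rangle$; apply $\omega_{t,q}$ to get $\omega_{t,q}\omega s_\lambda = \sum_\nu c_\nu\, P_{\nu'}(q,t)$, and then pair with $P_\mu(q,t)$. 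Since $\langle \omega_{t,q}\omega s_\lambda, P_\mu(q,t)\rangle_{q,t}$ can be computed two ways, matching coefficients yields $K^{(2)}_{\lambda,\mu}(q,t) = c_{\mu'} = \langle s_{\lambda'}, P_{\mu'}(t,q)\rangle = (K^{(2)})^{-1}_{\mu',\lambda'}(t,q)$, as desired.

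I expect the main obstacle to be purely bookkeeping: keeping straight the three involutions ($\omega$, $\omega_{q,t}$, $\omega_{t,q}$) and the two scalar products as $q,t$ are swapped, and making sure the adjunction $\langle \omega_{t,q}f, g\rangle_{q,t} = \langle \omega f, g\rangle$ is applied in the direction that actually produces a $(t,q)$-evaluation rather than collapsing everything to the Hall product. A safe way to organize this, which I would adopt if the direct coefficient-matching above gets delicate, is to verify the matrix identity $K^{(2)}(q,t) = J\,\big(K^{(2)}(t,q)\big)^{-\mathrm{tr}}\,J$ abstractly: show $\omega_{t,q}$ intertwines the change-of-basis maps, using $\omega s_\lambda = s_{\lambda'}$ on one side and $\omega_{t,q}P_\nu(t,q) = Q_{\nu'}(q,t)$, $\omega_{t,q}Q_\nu(t,q)=P_{\nu'}(q,t)$ on the other, and read off the $(\lambda,\mu)$ entry. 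Either way the proof is short once the duality of $\omega_{q,t}$ is in hand; this is precisely the content of \cite[Chapter VI, $(5.1'')$]{Mac95}, and for the paper's purposes it suffices to cite it, with the above sketch indicating how it follows from the $\omega_{q,t}$-duality relations recorded earlier.
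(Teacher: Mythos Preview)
Your approach is correct and is exactly the one the paper has in mind (indeed, the commented-out block you reference contains precisely this argument): use $\omega_{t,q}P_{\mu'}(t,q)=Q_{\mu}(q,t)$, the adjunction $\langle \omega_{t,q}f,g\rangle_{q,t}=\langle \omega f,g\rangle$, and $\omega s_\lambda=s_{\lambda'}$ to pass from $\langle s_\lambda,Q_\mu(q,t)\rangle_{q,t}$ to $\langle P_{\mu'}(t,q),s_{\lambda'}\rangle=(K^{(2)})^{-1}_{\mu',\lambda'}(t,q)$. Your coefficient-matching detour is unnecessary---the direct four-line chain of equalities already does the job once you note that both scalar products are symmetric and that $\omega$ is self-adjoint for the Hall product.
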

	Since $c_{\lambda}(q,t) = c'_{\lambda'}(t,q)$ \cite[Chapter VI, (8.2)]{Mac95} from \autoref{eqn:Jdefn},\autoref{eqnKdual} and \autoref{defnKi} we get 
	\begin{eqnarray}\label{Jduality}
		\langle J_{\lambda}(t,q),s_\mu \rangle = \langle J_{\lambda'}(q,t),s_{\mu'}\rangle_{q,t} 
	\end{eqnarray}

	\subsection{Haglund's conjecture}\label{sec:HagConj}
	
	In \cite{HHL06} it was shown that the monomial coefficients $\langle J_{\lambda}(q,t),h_{\mu} \rangle$ of $J_{\lambda}(q,t)$ have the following positivity property:	
	\begin{eqnarray}
		\dfrac{\langle J_{\lambda}(q,q^k),h_{\mu} \rangle}{(1-q)^{|\lambda|}} \in \mathbb{Z}_{\geq 0}[q] \quad \forall \ k \in \mathbb{Z}_{\geq 0}
	\end{eqnarray} 
	Haglund made the following conjecture in \cite{Hag10} about the Schur coefficients $\langle J_{\lambda}(q,t),s_{\mu} \rangle$ of $J_{\lambda}(q,t)$:
	\begin{conjecture}[Haglund]\label{hagconj} 
		For partitions $\lambda$ and $\mu$,
		\begin{equation*}
			\dfrac{\langle J_{\lambda}(q,q^k),s_\mu \rangle}{(1-q)^{|\lambda|}} \in \mathbb{Z}_{\geq 0}[q] \quad \forall \ k \in \mathbb{Z}_{\geq 0} \tag{Hag($\lambda,\mu$)} 
		\end{equation*}
	\end{conjecture}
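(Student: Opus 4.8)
The target is the full conjecture \ref{hagconj}, asserting $\langle J_\lambda(q,q^k),s_\mu\rangle/(1-q)^{|\lambda|}\in\mathbb{Z}_{\geq 0}[q]$ for \emph{every} pair $(\lambda,\mu)$. The plan is to pass first to the dual side. By the duality \eqref{Jduality} together with \autoref{Kdual}, and recalling that $k_{\lambda,\mu}(q,t)$ agrees with $K^{(2)}_{\lambda,\mu}(q,t)$ up to the explicit factor $c'_\mu(q,t)$, proving \ref{hagconj} for all pairs is equivalent to establishing the dual form
\[
\frac{k_{\lambda,\mu}(t^k,t)}{(1-t)^{|\lambda|}}\in\mathbb{Z}_{\geq 0}[t]\qquad\forall\,k\in\mathbb{Z}_{\geq 0}
\]
for all pairs. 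Working with $k_{\lambda,\mu}$ (equivalently $K^{(2)}_{\lambda,\mu}$) is advantageous because its entries sit in the transition matrix whose inverse carries the Schur coefficients of $P_\lambda$, and because its multiplicative behaviour is controllable.

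The second step strips away everything that can be peeled off multiplicatively, reducing to a rigid core. Whenever a partial sum of $\lambda$ meets the corresponding partial sum of $\mu$, \autoref{rowmult} factors $(K^{(i)})^{\pm1}_{\lambda,\mu}$ as a product over the two sub-partitions obtained by cutting across that row; \autoref{colmult} does the same across a column, and \autoref{complement} gives invariance under complementation inside a bounding rectangle. Iterating these, together with the rectangle removal of \autoref{cor:rectangleinsert}, reduces the dual statement for $(\lambda,\mu)$ to the same statement for \emph{irreducible} pairs, those with $\lambda\geq\mu$ and strict partial-sum inequalities $\lambda_1+\cdots+\lambda_i>\mu_1+\cdots+\mu_i$ for $1\le i\le\ell(\lambda)$. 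Since a product of elements of $\mathbb{Z}_{\geq 0}[t]$ remains in $\mathbb{Z}_{\geq 0}[t]$, the entire conjecture rests on the irreducible case.

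For an irreducible pair the goal is an explicit positive formula: write $k_{\lambda,\mu}(q,t)$ as a sum of exactly $K_{\lambda,\mu}$ terms, each a single product with numerator factors $(t^a-q^bt^c)$ and denominator factors $(1-q^dt^e)$, where $a,b,c,d,e\in\mathbb{Z}_{\geq 0}$. Such a form is tailor-made for the conjecture: after setting $q=t^k$, every numerator factor $t^a-t^{bk+c}$ equals $\pm$ a power of $t$ times $(1-t^m)$ for some $m\ge 0$, while $(1-t^{dk+e})=(1-t)(1+t+\cdots+t^{dk+e-1})$; so once the signs align and the $(1-t)$ factors in numerator and denominator are matched against $(1-t)^{|\lambda|}$, each term becomes a monomial times a quotient of factors $(1+t+\cdots)$, manifestly in $\mathbb{Z}_{\geq 0}[t]$. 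When one of the two Kostka numbers is $1$, \autoref{BZtheorem} forces $\lambda$ (or $\mu'$) to be a rectangle $(m^n)$; complementation collapses the rectangle to a single row, and there the Cauchy identity with Macdonald's principal specialization produces exactly such a formula (\autoref{k_n_mu}). This is the content of the main theorem, and amounts to the conjecture along the ``boundary'' of the dominance interval $[\mu,\lambda]$.

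The main obstacle is the generic irreducible pair, where $K_{\lambda,\mu}>1$ and $K_{\mu',\lambda'}>1$ simultaneously and no closed form for the Macdonald--Schur transition is available. Here the term-by-term strategy stalls: there is no known canonical decomposition of $k_{\lambda,\mu}$ into $K_{\lambda,\mu}$ positive pieces of the shape above, and no combinatorial model --- analogous to the charge statistic computing the specialization $K^{(2)}_{\lambda,\mu}(0,t)=K_{\lambda,\mu}(t)$ --- is known for the Schur expansion itself. My plan in this range would be to attempt an induction on the interval $[\mu,\lambda]$, using the factorizations of \autoref{rowmult} and \autoref{colmult} to isolate boundary pieces while seeking a Pieri- or branching-type recursion in the $P$-basis that lowers $\lambda$ toward $\mu$ and preserves positivity of the specialized quotient; failing that, to build a charge-like statistic lifting $K_{\lambda,\mu}(t)$ to the two-variable setting. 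Producing either the recursion or the statistic in full generality, and proving it respects the $(1-t)^{|\lambda|}$ normalization, is exactly the step I expect to be hard, and is precisely why the unconditional conjecture remains open beyond the multiplicity-one pairs treated here.
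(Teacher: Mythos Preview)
Your proposal is not a proof of \autoref{hagconj}, and you correctly recognize this yourself: the final paragraph concedes that for a generic irreducible pair with $K_{\lambda,\mu}>1$ and $K_{\mu',\lambda'}>1$ you have no argument, only a hoped-for recursion or statistic. That is not a gap in your reasoning so much as an accurate diagnosis of the state of the problem. The paper does \emph{not} prove \autoref{hagconj}; it is stated as a conjecture and remains one. What the paper establishes is the main theorem, namely \ref{hagdualconj} (equivalently \ref{Hag}) for multiplicity-one pairs only, via exactly the route you sketch in your first three paragraphs: pass to the dual formulation, factor through \autoref{rowmult}, \autoref{colmult}, and \autoref{cor:rectangleinsert} to reduce to irreducible pairs, then use \autoref{BZtheorem} and \autoref{complement} to collapse to the single-row case handled by \autoref{k_n_mu} and \autoref{klambda1nprop}.

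So the situation is: your outline of the reduction steps matches the paper's method for the cases it does settle, and your identification of the obstruction (no combinatorial model or positive decomposition of $k_{\lambda,\mu}$ for general irreducible pairs) is precisely why the full conjecture is still open. There is nothing to compare against for the general statement, because the paper offers no proof of it.
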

	Meesue Yoo showed this is true in some special cases in \cite{Yoo12}, \cite{Yoo15}. 
	
	Let \begin{eqnarray}\label{eqn:kdefn}
		k_{\lambda,\mu}(q,t) = \langle s_{\lambda}, J_{\mu}(q,t) \rangle_{q,t}
	\end{eqnarray} 
	
	Then \begin{eqnarray}\label{k=K2c'}
		k_{\lambda,\mu}(q,t) = K^{(2)}_{\lambda,\mu}(q,t)c'_{\mu}(q,t) = (K^{(2)})^{-1}_{\mu',\lambda'}(t,q)c'_{\mu}(q,t)
	\end{eqnarray}
	
	Note that $c'_{\mu}(0,t) = 1$, so \begin{eqnarray}
		k_{\lambda,\mu}(0,t) = K_{\lambda,\mu}(t)
	\end{eqnarray} where the right hand side is the Kostka-Foulkes polynomial as in \autoref{specializations}. By \autoref{Jduality} we have the following dual version of Haglund's conjecture.
	\begin{conjecture} 
		For partitions $\lambda,\mu$,
		\begin{equation*}
			\dfrac{k_{\lambda,\mu}(t^k,t)}{(1-t)^{|\lambda|}} \in \mathbb{Z}_{\geq 0}[t] \quad \forall \ k \in \mathbb{Z}_{\geq 0} \tag{Hag$^\prime(\lambda,\mu$)} \label{hagdualconj}
		\end{equation*}
	\end{conjecture}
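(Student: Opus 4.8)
The plan is to establish \ref{hagdualconj} in the two ranges covered by the theorem above — pairs with $K_{\lambda,\mu}=1$ and pairs with $K_{\mu',\lambda'}=1$; the conjecture in full generality I do not propose to settle here. The first observation is that these two hypotheses are exchanged by conjugating both partitions and swapping $q\leftrightarrow t$: combining $k_{\lambda,\mu}(q,t)=K^{(2)}_{\lambda,\mu}(q,t)\,c'_\mu(q,t)$ with \autoref{Jduality} (and $c_\lambda(q,t)=c'_{\lambda'}(t,q)$), the positivity of $k_{\mu',\lambda'}(t,q)/(1-t)^{|\lambda|}$ is literally the statement $\langle J_\lambda(t,q),s_\mu\rangle/(1-t)^{|\lambda|}\in\mathbb{Z}_{\geq0}[t]$, which is \ref{Hag} for $(\lambda,\mu)$. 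So it suffices to prove Hag$^{\prime}(\lambda,\mu)$ under the hypothesis $K_{\lambda,\mu}=1$, and the $K_{\mu',\lambda'}=1$ case follows formally.

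Next I would deploy the multiplicative reduction principles. By \autoref{rowmult} and \autoref{colmult}, whenever a partial sum of $\lambda$ meets the corresponding partial sum of $\mu$ (across a row or, dually, across a column), every entry $(K^{(i)})^{\pm 1}_{\lambda,\mu}$ factors as a product over the two resulting sub-pairs, and the normalizations $c'_\mu$ factor compatibly. Iterating, any pair with $\lambda\geq\mu$ breaks into irreducible pieces; since Kostka numbers are multiplicative along the same decomposition and each piece satisfies $\lambda^j\geq\mu^j$ (so $K_{\lambda^j,\mu^j}\geq1$), the hypothesis $K_{\lambda,\mu}=1$ forces $K_{\lambda^j,\mu^j}=1$ for every piece. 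Then \autoref{cor:rectangleinsert} lets me strip the common leading rectangle from each piece without affecting Hag$^{\prime}$. Because a product of elements of $\mathbb{Z}_{\geq0}[t]$ is again in $\mathbb{Z}_{\geq0}[t]$, the only thing to check at this stage is that the power of $(1-t)$ is apportioned correctly among the factors; granting that, Hag$^{\prime}(\lambda,\mu)$ reduces to the case of an irreducible pair with $K_{\lambda,\mu}=1$.

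Now I invoke Berenshtein and Zelevinskii's classification \autoref{BZtheorem}: an irreducible pair with $K_{\lambda,\mu}=1$ has either $\lambda=(m^n)$ with $\ell(\mu)=n+1$, or $\lambda=(n)$ a single row. In the rectangle case, \autoref{complement} says that passing to complements inside the bounding rectangle $(m^{n+1})$ leaves $(K^{(i)})^{\pm1}$ unchanged, and since the complement of $(m^n)$ in $(m^{n+1})$ is the single row $(m)$, this case collapses to the single-row case. It therefore remains to compute $k_{(n),\mu}(q,t)=K^{(2)}_{(n),\mu}(q,t)\,c'_\mu(q,t)$. For this I would write $s_{(n)}=h_n$, expand it against the $Q$-basis using the Cauchy identity, and read off the coefficients via Macdonald's principal specialization formula, obtaining the closed product formula \autoref{k_n_mu}; then push the answer back through \autoref{cor:rectangleinsert} to reinstate the stripped rectangle and through the inversion duality \autoref{Kdual} to move between $K^{(2)}$ and its inverse. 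The upshot is an explicit formula for $k_{\lambda,\mu}(q,t)$ as a sum of $K_{\lambda,\mu}$ products, each a ratio of numerator factors $(t^a-q^bt^c)$ over denominator factors $(1-q^dt^e)$ with $a,b,c,d,e\in\mathbb{Z}_{\geq0}$.

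The final step is the specialization $q=t^k$. Each denominator factor becomes $(1-t^{dk+e})=(1-t)\cdot[\text{$q$-integer in }\mathbb{Z}_{\geq0}[t]]$, and each numerator factor $t^a-t^{bk+c}$ is, up to a monomial in $t$, $\pm(1-t)$ times a non-negative polynomial; one must verify that the total multiplicity of $(1-t)$ accumulated over the whole expression — numerator, denominator, and the $c'_\mu$ normalization together — is exactly $|\lambda|$, and that the quotient is genuinely in $\mathbb{Z}_{\geq0}[t]$ for every $k$. I expect this $(1-t)$-counting and sign bookkeeping to be the main obstacle: the exponents depend intricately on the shape of $\mu$ and, in the rectangle case, on which boxes survive complementation; and in the $K_{\mu',\lambda'}=1$ branch the number of summands $K_{\lambda,\mu}$ can exceed $1$, so the argument must rule out the need for cancellation \emph{between} distinct summands and instead exhibit each product individually — after the right number of $(1-t)$ factors has been cancelled against the $c'_\mu$ normalization — as a non-negative polynomial, uniformly in $k$.
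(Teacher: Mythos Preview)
Your overall architecture---reduce via \autoref{rowmult}, \autoref{colmult}, \autoref{cor:rectangleinsert} to irreducible pairs, classify these by \autoref{BZtheorem}, and use \autoref{complement} to collapse the rectangle case to the single-row case---matches the paper. But there are two genuine gaps.

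\emph{The duality reduction is false.} You claim that once Hag$'(\lambda,\mu)$ is established for all pairs with $K_{\lambda,\mu}=1$, the case $K_{\mu',\lambda'}=1$ follows formally. It does not. The identity \autoref{Jduality} gives Hag$(\lambda,\mu)\Leftrightarrow$ Hag$'(\mu',\lambda')$; applying this with $(\alpha,\beta)=(\mu',\lambda')$ shows that Hag$'(\lambda,\mu)$ for pairs with $K_{\mu',\lambda'}=1$ is equivalent to Hag$(\alpha,\beta)$ for pairs with $K_{\alpha,\beta}=1$, \emph{not} to Hag$'(\alpha,\beta)$ for such pairs. The two families of statements are genuinely distinct, and the paper treats them separately: for $K_{\mu',\lambda'}=1$ one works with $(K^{(2)})^{-1}_{\mu',\lambda'}(t,q)$, uses complementation on the pair $(\mu',\lambda')$, and lands on the base case $k_{\lambda^c,(1^m)}$ from \autoref{klambda1nprop}, with the much simpler normalization ratio $c_{(m^n)}/c_{(m)}$ in which the bottom row cancels trivially.

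\emph{The complementation step hides the real work.} In the rectangle case $\lambda=(m^n)$, \autoref{complement} gives $K^{(2)}_{\lambda,\mu}=K^{(2)}_{(m),\mu^c}$, but at the level of $k$ this becomes
\[
k_{\lambda,\mu}(q,t)=k_{(m),\mu^c}(q,t)\,\dfrac{c'_{\mu}(q,t)}{c'_{\mu^c}(q,t)},
\]
and it is not at all obvious that the ratio $c'_{\mu}/c'_{\mu^c}$ is a product of factors $(1-q^{\alpha}t^{\beta})$ with $\alpha,\beta\geq 0$; a priori some denominator factors could survive. This is not ``$(1-t)$-bookkeeping'': one must prove that every factor of $c'_{\mu^c}$ actually occurs in $c'_{\mu}$. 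The paper does this by computing $f_{\mu}(q,t)-f_{\mu^c}(q,t)$ explicitly (where $f_{\nu}=\sum_{x\in\nu}q^{a(x)}t^{l(x)}$) and showing it lies in $\mathbb{Z}_{\geq 0}[q,t]$, which in turn rests on the inequality $\mu^c_i\leq\mu_i$ for all $i$ under the constraints $|\mu|=mn$, $\ell(\mu)=n+1$, $\mu\subset(m^{n+1})$. Your proposal conflates this with \autoref{cor:rectangleinsert}, which handles a different operation (stripping a \emph{common} rectangle, where the normalization ratio is manifestly polynomial); the complementation ratio requires the dedicated argument.
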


		\section{When $\lambda$ is a single row or $\mu$ is a single column}
		
		In this section we derive formulas for $K^{(2)}_{(n),\mu}(q,t)$ and $K^{(2)}_{\lambda,1^n}(q,t)$, and use it to show  \ref{hagdualconj} is true in these cases.
		
		We begin by reviewing Cauchy formulas and Principal Specializations.

		\subsection{Cauchy identity}

		For a series of rational functions $E$ we let $\Omega[E] = \sum_{n \geq 0} h_n[E]$. Let $(u_{\lambda})_{\lambda \in \text{Par}}$, $(v_{\lambda})_{\lambda \in \text{Par}}$ be Hall-dual bases, then the Cauchy identity says that \begin{eqnarray}
			\Omega[XY] = \sum_{\lambda \in \text{Par}}u_{\lambda}[X]v_\lambda[Y]
		\end{eqnarray}  
		
		If $(u_{\lambda})_{\lambda \in \text{Par}}$, $(v_{\lambda})_{\lambda \in \text{Par}}$ are $q,t$-dual bases, then by definition of $\langle , \rangle_{q,t}$  $\bigg(u_{\lambda}\bigg[X\dfrac{1-q}{1-t}\bigg]\bigg)_{\lambda \in Par}$, $(v_{\lambda}[X])_{\lambda \in Par}$ are Hall-dual so, \begin{eqnarray}
			\Omega[XY] = \sum_{\lambda \in Par} u_{\lambda}\bigg[X\dfrac{1-q}{1-t}\bigg] v_{\lambda}[Y]
		\end{eqnarray}
		or, by replacing $X$ with $X\dfrac{1-t}{1-q}$ we get the $(q,t)$-Cauchy identity: \begin{eqnarray}\label{eqn:qtcauchy}
			\Omega\bigg[XY\dfrac{1-t}{1-q}\bigg] = \sum_{\lambda \in Par} u_{\lambda}[X] v_{\lambda}[Y]
		\end{eqnarray}
		
		Since $(h_{\lambda})_{\lambda\in Par}$ and $(m_{\lambda})_{\lambda \in Par}$ are Hall-dual, $\bigg(h_{\lambda}\bigg[ X \dfrac{1-t}{1-q} \bigg]\bigg)_{\lambda\in Par}$ is $q,t$-dual to $(m_{\lambda}[X])_{\lambda \in Par}$. 
		
		We give a quick proof of the following well-known result, \cite{Mac95}[Chapter VI: (5.5)] to illustrate the power of plethystic methods. 
		
		\begin{proposition}\label{Q_n}
			\begin{eqnarray}
				Q_{(n)}[X;q,t] = h_{n}\bigg[X \dfrac{1-t}{1-q}\bigg]
			\end{eqnarray}
		\end{proposition}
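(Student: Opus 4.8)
The plan is to read $Q_{(n)}$ directly off the $(q,t)$-Cauchy identity \eqref{eqn:qtcauchy}, by comparing two expansions of the same series. Applying \eqref{eqn:qtcauchy} to the $q,t$-dual bases $(P_\lambda)_{\lambda\in Par}$ and $(Q_\lambda)_{\lambda\in Par}$ gives
\[
\Omega\!\left[XY\,\frac{1-t}{1-q}\right]=\sum_{\lambda\in Par}P_\lambda[X]\,Q_\lambda[Y],
\]
whereas applying it to the $q,t$-dual pair $\bigl(h_\lambda\!\left[X\frac{1-t}{1-q}\right]\bigr)_{\lambda\in Par}$ and $(m_\lambda[X])_{\lambda\in Par}$ (identified just above the statement) gives
\[
\Omega\!\left[XY\,\frac{1-t}{1-q}\right]=\sum_{\lambda\in Par}h_\lambda\!\left[X\,\frac{1-t}{1-q}\right]m_\lambda[Y].
\]
So these two sums are equal.

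Next I would specialize $Y$ to a single variable $y$ and compare coefficients of $y^{n}$. On the second sum, $m_\lambda[y]=y^{|\lambda|}$ if $\lambda$ has at most one row and $m_\lambda[y]=0$ otherwise, so only $\lambda=(n)$ contributes and the coefficient of $y^{n}$ is $h_{n}\!\left[X\frac{1-t}{1-q}\right]$. For the first sum I would use the single-variable specialization $P_\lambda[y]=\delta_{\lambda,(|\lambda|)}\,y^{|\lambda|}$, which follows from the dominance triangularity $P_\lambda=m_\lambda+\sum_{\mu<\lambda}(\ast)\,m_\mu$ together with the fact that $(n)$ is the top element of the dominance order on partitions of $n$, so $m_{(n)}$ cannot occur in $P_\lambda$ unless $\lambda=(n)$. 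Then $Q_\lambda[y]=b_\lambda(q,t)\,P_\lambda[y]$ vanishes unless $\lambda$ is a single row, and the coefficient of $y^{n}$ in the first sum is $b_{(n)}(q,t)\,P_{(n)}[X]=Q_{(n)}[X]$. Equating the two coefficients gives $Q_{(n)}[X;q,t]=h_{n}\!\left[X\frac{1-t}{1-q}\right]$, as claimed.

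An alternative, equally short route is to verify the defining duality directly: since $\bigl(h_\lambda[X\frac{1-t}{1-q}]\bigr)$ is $q,t$-dual to $(m_\lambda)$, one has $\bigl\langle m_\mu,\,h_{n}[X\frac{1-t}{1-q}]\bigr\rangle_{q,t}=\delta_{\mu,(n)}$; expanding $P_\lambda$ in monomials via triangularity then yields $\bigl\langle P_\lambda,\,h_{n}[X\frac{1-t}{1-q}]\bigr\rangle_{q,t}=\delta_{\lambda,(n)}$ (because $(n)\le\lambda$ forces $\lambda=(n)$), and uniqueness of the dual basis $(Q_\lambda)$ identifies $h_{n}[X\frac{1-t}{1-q}]$ with $Q_{(n)}$. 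Either way, the one point that needs care is the single-variable specialization of Macdonald polynomials — equivalently, that $m_{(n)}$ appears in $P_\lambda$ only when $\lambda=(n)$; the rest is bookkeeping with plethystic substitutions, above all keeping the direction $\frac{1-t}{1-q}$ straight rather than its reciprocal.
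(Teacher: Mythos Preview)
Your proof is correct and follows essentially the same route as the paper's: both equate the $(P,Q)$ and $(h[\,\cdot\,\tfrac{1-t}{1-q}],m)$ expansions of the $(q,t)$-Cauchy kernel, specialize $Y$ to a single variable, and use triangularity to kill all but the one-row terms. The only cosmetic difference is that the paper places $Q_\lambda$ in the $X$-slot and $P_\lambda$ in the $Y$-slot (so it reads off $Q_{(n)}[X]$ directly from $P_\lambda[y]=\delta_{\lambda,(n)}y^n$), whereas you swap the roles and then invoke $Q_\lambda=b_\lambda P_\lambda$ to pass from $P_{(n)}[X]\,b_{(n)}$ to $Q_{(n)}[X]$; this extra step is harmless.
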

		
		\begin{proof}
			Cauchy identity gives \begin{eqnarray}\label{handmincauchy}
				\sum_{\lambda \in Par} h_{\lambda}\bigg[X\dfrac{1-t}{1-q}\bigg]m_{\lambda}[Y] = \sum_{\lambda \in Par}Q_{\lambda}[X;q,t]P_{\lambda}[Y;q,t]
			\end{eqnarray} 
			
			Substitute $Y=y$, where $y$ is a variable. Since \begin{eqnarray}
				m_{\lambda}[y]=m_{\lambda}(y,0,0,...)=  \begin{cases}
					y^n ,& \text{ if } \lambda = (n) \text{ for some } n\geq 0 \\ 0 ,& \text{otherwise} 
				\end{cases}
			\end{eqnarray}
			and by uni-triangularity for $P_{\lambda}$, \begin{eqnarray}
				P_{\lambda}[y]=  \begin{cases}
					y^n ,& \text{ if } \lambda = (n) \text{ for some } n\geq 0 \\ 0 ,& \text{otherwise} 
				\end{cases}
			\end{eqnarray}
			
			Hence, by \autoref{handmincauchy} above, \begin{eqnarray}
				\sum_{n \geq 0}h_{(n)}\bigg[X \dfrac{1-t}{1-q} \bigg]y^n = \sum_{n \geq 0} Q_{(n)}[X;q,t]y^n
			\end{eqnarray}
			
			Comparing coefficient of $y^n$ we get the result.
		\end{proof}

		\subsection{Principal Specializations}
		
		We can use the Cauchy identity and plethystic substituion to calculate the first row of various transition matrices. The next lemma and the corollaries illustrate that. We will need Macdonald's evaluation identity stated below.
		
		\begin{theorem}\cite[Chapter VI (6.17)]{Mac95} \label{Maceval}
			Let $z$ be a complex variable. Then \begin{eqnarray}
				P_{\lambda}\bigg[\dfrac{1-z}{1-t};q,t\bigg] = \prod_{s \in \lambda} \dfrac{t^{l'(s)}-q^{a'(s)}z}{1-q^{a(s)}t^{l(s)+1}} \\ Q_{\lambda}\bigg[\dfrac{1-z}{1-t};q,t\bigg] = \prod_{s \in \lambda} \dfrac{t^{l'(s)}-q^{a'(s)}z}{1-q^{a(s)+1}t^{l(s)}}
			\end{eqnarray} 
		\end{theorem}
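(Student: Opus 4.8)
The plan is to derive the evaluation identity from the \emph{principal specialization} of $P_\lambda$ at a finite geometric progression $1,t,\dots,t^{n-1}$, which is itself proved by induction on the number of variables via the branching rule. First, the $Q$-identity follows from the $P$-identity: since $Q_\mu(q,t)=b_\mu(q,t)P_\mu(q,t)$ with $b_\mu=c_\mu/c'_\mu$, $c_\mu=\prod_{s\in\mu}(1-q^{a(s)}t^{l(s)+1})$ and $c'_\mu=\prod_{s\in\mu}(1-q^{a(s)+1}t^{l(s)})$, multiplying $\prod_{s\in\lambda}\frac{t^{l'(s)}-q^{a'(s)}z}{1-q^{a(s)}t^{l(s)+1}}$ by $c_\lambda/c'_\lambda$ cancels the denominators against $c_\lambda$ and replaces them by $c'_\lambda$, producing exactly $\prod_{s\in\lambda}\frac{t^{l'(s)}-q^{a'(s)}z}{1-q^{a(s)+1}t^{l(s)}}$. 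So it suffices to prove the formula for $P_\lambda$.

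Next I would observe that both sides of the $P$-identity are polynomials in $z$ over $\mathbb{C}(q,t)$ of degree at most $|\lambda|$: the right-hand side is a product of $|\lambda|$ linear factors $t^{l'(s)}-q^{a'(s)}z$ divided by a quantity free of $z$, and the left-hand side is the image of the degree-$|\lambda|$ polynomial $P_\lambda$ in $p_1,\dots,p_{|\lambda|}$ under $p_k\mapsto p_k[\tfrac{1-z}{1-t}]=\frac{1-z^{k}}{1-t^{k}}$, a polynomial of degree $k$ in $z$. Hence it is enough to check equality at the $|\lambda|+1$ pairwise distinct values $z=1,t,t^2,\dots,t^{|\lambda|}$. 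At $z=t^{n}$ the plethystic alphabet $\tfrac{1-z}{1-t}$ is the finite alphabet $\{1,t,\dots,t^{n-1}\}$, so the left-hand side becomes $P_\lambda(1,t,\dots,t^{n-1};q,t)$, while pulling $t^{l'(s)}$ out of each numerator factor turns the right-hand side into $t^{n(\lambda)}\prod_{s\in\lambda}\frac{1-q^{a'(s)}t^{\,n-l'(s)}}{1-q^{a(s)}t^{\,l(s)+1}}$ (for $n=0$ both sides vanish unless $\lambda=\varnothing$, the box $(1,1)$ contributing $1-1$ to the numerator). Thus the theorem reduces to the principal specialization formula
\[
P_\lambda(1,t,\dots,t^{n-1};q,t)=t^{n(\lambda)}\prod_{s\in\lambda}\frac{1-q^{a'(s)}t^{\,n-l'(s)}}{1-q^{a(s)}t^{\,l(s)+1}},\qquad n\ge 0.
\]

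I would prove this by induction on $n$ using the branching rule $P_\lambda(x_1,\dots,x_n;q,t)=\sum_{\mu}\psi_{\lambda/\mu}(q,t)\,x_n^{\,|\lambda|-|\mu|}\,P_\mu(x_1,\dots,x_{n-1};q,t)$, the sum over $\mu$ with $\lambda/\mu$ a horizontal strip (the single-row-increment form of the tableau expansion \autoref{eq:K1=sumpsi}, see \cite[Chapter~VI]{Mac95}). The base case $n=1$ is immediate, since $P_\lambda(x_1)=x_1^{|\lambda|}$ when $\ell(\lambda)\le 1$ and $0$ otherwise. For the inductive step one sets $x_i=t^{i-1}$, applies the inductive hypothesis (with $n$ replaced by $n-1$) to each $P_\mu(1,t,\dots,t^{n-2};q,t)$, and is reduced to verifying the purely combinatorial identity
\[
\sum_{\mu}\psi_{\lambda/\mu}(q,t)\,t^{\,n(\mu)+(n-1)(|\lambda|-|\mu|)}\prod_{s\in\mu}\frac{1-q^{a'_\mu(s)}t^{\,(n-1)-l'_\mu(s)}}{1-q^{a_\mu(s)}t^{\,l_\mu(s)+1}}=t^{\,n(\lambda)}\prod_{s\in\lambda}\frac{1-q^{a'_\lambda(s)}t^{\,n-l'_\lambda(s)}}{1-q^{a_\lambda(s)}t^{\,l_\lambda(s)+1}},
\]
the sum again over horizontal strips $\lambda/\mu$.

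\textbf{The main obstacle} is this last summation. Since $\lambda/\mu$ is a horizontal strip it meets each column in at most one box, so I would organize the verification column by column: using the explicit product form of $\psi_{\lambda/\mu}$ and carefully comparing the arms, legs, coarms and colegs of the boxes of $\mu$ and of $\lambda$ in a fixed column (most factors on the two sides match and cancel in pairs), the identity factors into one single-variable telescoping summation per column, of basic-hypergeometric ($q$-Chu--Vandermonde) type, whose value is the corresponding column factor on the right. An alternative that bypasses this bookkeeping is to specialize the $(q,t)$-Cauchy identity \autoref{handmincauchy} at $Y=\{1,t,\dots,t^{n-1}\}$, exactly as in the proof of \autoref{Q_n}: the Cauchy kernel telescopes over the finite geometric alphabet to $\prod_i (t^{n}x_i;q)_\infty/(x_i;q)_\infty$, which the $q$-binomial theorem expands in the monomial basis, yielding a dominance-unitriangular linear system for the scalars $Q_\mu(1,t,\dots,t^{n-1};q,t)$; solving it starting from $\mu=(n)$ using Macdonald's formula \autoref{eq:K1=sumpsi} for $K^{(1)}$, and then dividing by $b_\mu(q,t)$, recovers $P_\mu(1,t,\dots,t^{n-1};q,t)$. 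Either way the essential content is a hypergeometric summation, and I expect the column-by-column route to be the cleaner one to write down.
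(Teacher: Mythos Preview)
The paper does not supply its own proof of this theorem; it simply quotes it from \cite[Chapter~VI, (6.17)]{Mac95} and uses it as input. So there is nothing in the paper to compare your argument against. That said, your reduction is the standard one: observing that both sides are polynomials in $z$ of degree $\le |\lambda|$ and checking them at $z=t^n$ for $0\le n\le |\lambda|$ is exactly how Macdonald passes from the principal specialization (6.11) to the one-parameter identity (6.17), and your derivation of the $Q$-formula from the $P$-formula via $b_\lambda=c_\lambda/c'_\lambda$ is fine.

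The gap is in your proposed proof of the principal specialization itself. The claim that the branching-rule sum ``factors into one single-variable telescoping summation per column'' is not correct as stated: a horizontal strip $\lambda/\mu$ is \emph{not} a free choice of one bit per column, because the partition condition $\mu'_j\ge\mu'_{j+1}$ couples adjacent columns whenever $\lambda'_j=\lambda'_{j+1}$ (for instance, for $\lambda=(2,2)$ there are three horizontal strips, not four). So the summation does not split as a product over columns, and the identity you need is not a collection of independent one-variable sums. One can salvage an inductive argument along these lines, but it requires grouping columns into maximal blocks of equal depth and performing a genuinely multivariate (though still hypergeometric) summation on each block; this is considerably more delicate than a telescoping check. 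Macdonald's own proof of (6.11) avoids this combinatorics entirely: he first establishes the evaluation symmetry (VI.6.6) using the Macdonald operators $D_n^r$, and then the principal specialization drops out. If you want a self-contained argument, that operator route is the one to write down; your alternative via the Cauchy identity is too vague as it stands (the system you describe is not triangular in a way that lets you read off $Q_\mu(1,t,\dots,t^{n-1})$ directly).
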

		
		\subsection{When $\lambda$ is a row or $\mu$ is a column}
		We now calculate $k_{\lambda,\mu}(q,t)$ in these special cases.
		Substituting $Y$ with $\dfrac{1-z}{1-t}$ in the Cauchy identity \autoref{eqn:qtcauchy} and using \autoref{Maceval} we get the following lemma.
		
		\begin{lemma}\label{h_nlemma}
			Let $z$ be a complex variable. For a pair $(u_{\lambda})_{\lambda \in \text{Par}}$ and $(v_{\lambda})_{\lambda \in \text{Par}}$ of Hall-dual bases of $\Lambda$ we have
			\begin{eqnarray}
				h_n\bigg[X\dfrac{1-z}{1-q}\bigg] = \sum_{\lambda \, \vdash n} v_{\lambda}\bigg[ \dfrac{1-z}{1-t}\bigg] u_{\lambda}[X]
			\end{eqnarray}
			In particular, \begin{eqnarray}\label{h_nPlemma}
				h_n\bigg[X\dfrac{1-z}{1-q}\bigg] = \sum_{\lambda \, \vdash n} \prod_{x \in \lambda}\dfrac{t^{l'(x)}-q^{a'(x)}z}{1-q^{a(x)+1}t^{l(x)}}P_{\lambda}[X;q,t]
			\end{eqnarray}
			
			At $t=q$, \begin{eqnarray}\label{h_nslemma}
				h_n\bigg[X\dfrac{1-z}{1-q}\bigg] = \sum_{\lambda \, \vdash n}q^{n(\lambda)}\prod_{x \in \lambda}\dfrac{1-q^{c(x)}z}{1-q^{h(x)}}s_{\lambda}[X]
			\end{eqnarray}
		\end{lemma}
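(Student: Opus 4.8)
The plan is to start from the $(q,t)$-Cauchy identity \autoref{eqn:qtcauchy}, namely $\Omega\big[XY\tfrac{1-t}{1-q}\big] = \sum_{\lambda} u_\lambda[X] v_\lambda[Y]$, valid for any pair of Hall-dual bases $(u_\lambda),(v_\lambda)$ — here we actually want to read it in the form where $\big(h_\lambda[X\tfrac{1-t}{1-q}]\big)$ is $q,t$-dual to $(m_\lambda[X])$, but more to the point we want the generic Hall-dual statement. First I would recall the ungraded Cauchy identity $\Omega[XY]=\sum_\lambda u_\lambda[X]v_\lambda[Y]$ for Hall-dual bases, and then perform the plethystic substitution $Y \mapsto \tfrac{1-z}{1-t}$ on one side and, symmetrically, recognize that after substitution the left-hand side $\Omega\big[X\tfrac{1-z}{1-t}\big]$ can be rewritten. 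The key manipulation is the identity $\Omega\big[X\tfrac{1-z}{1-t}\big] = \sum_{n\geq 0} h_n\big[X\tfrac{1-z}{1-t}\big]$ by definition of $\Omega$, together with the plethystic rewriting $h_n\big[X\tfrac{1-z}{1-t}\big]$ versus $h_n\big[X\tfrac{1-z}{1-q}\big]$ — wait, these are not equal, so the correct route is: apply the substitution to the $(q,t)$-Cauchy identity so that $v_\lambda$ is evaluated at $\tfrac{1-z}{1-t}$, which is exactly where Macdonald's evaluation identity \autoref{Maceval} applies.

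Concretely: in \autoref{eqn:qtcauchy} take $(u_\lambda),(v_\lambda)$ to be Hall-dual, substitute $Y \mapsto \tfrac{1-z}{1-t}$, so the left side becomes $\Omega\big[X\tfrac{1-z}{1-t}\cdot\tfrac{1-t}{1-q}\big] = \Omega\big[X\tfrac{1-z}{1-q}\big] = \sum_{n\geq 0} h_n\big[X\tfrac{1-z}{1-q}\big]$. The right side becomes $\sum_\lambda u_\lambda[X]\, v_\lambda\big[\tfrac{1-z}{1-t}\big]$. Since each $v_\lambda\big[\tfrac{1-z}{1-t}\big]$ is a scalar (no $X$-dependence), grouping by $|\lambda|=n$ and comparing the homogeneous degree-$n$ components of both sides in $X$ yields the first displayed equation. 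For \autoref{h_nPlemma}, specialize $(u_\lambda,v_\lambda) = (P_\lambda(q,t), Q_\lambda(q,t))$, which are $q,t$-dual — here one uses the $q,t$-Cauchy identity before the replacement, i.e. the version with $u_\lambda[X\tfrac{1-q}{1-t}]$ Hall-dual to $v_\lambda[X]$ — and plug in the second formula of \autoref{Maceval}, $Q_\lambda\big[\tfrac{1-z}{1-t};q,t\big] = \prod_{s\in\lambda}\tfrac{t^{l'(s)}-q^{a'(s)}z}{1-q^{a(s)+1}t^{l(s)}}$. For \autoref{h_nslemma}, set $t=q$: then $P_\lambda(q,q)=s_\lambda$, the factor $\tfrac{t^{l'(x)}-q^{a'(x)}z}{1-q^{a(x)+1}t^{l(x)}}$ becomes $\tfrac{q^{l'(x)}-q^{a'(x)}z}{1-q^{a(x)+l(x)+1}} = q^{l'(x)}\tfrac{1-q^{a'(x)-l'(x)}z}{1-q^{h(x)}} = q^{l'(x)}\tfrac{1-q^{c(x)}z}{1-q^{h(x)}}$, and $\prod_{x\in\lambda}q^{l'(x)} = q^{\sum_{x\in\lambda} l'(x)} = q^{n(\lambda)}$, giving the stated form.

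The only genuinely delicate point is bookkeeping with the two scalar products: one must be careful to apply \autoref{Maceval} to the argument $\tfrac{1-z}{1-t}$ (not $\tfrac{1-z}{1-q}$), which forces the choice of which side of the $(q,t)$-Cauchy identity gets the $\tfrac{1-q}{1-t}$ twist before substituting $Y$. I expect this is the step most likely to trip up the reader, so the write-up should display the substitution explicitly: start from $\sum_\lambda u_\lambda\big[X\tfrac{1-q}{1-t}\big] v_\lambda[Y] = \Omega[XY]$, put $Y=\tfrac{1-z}{1-t}$, hit $u_\lambda$ with the Macdonald evaluation after also transferring the plethysm — actually the cleanest is to keep $(u_\lambda),(v_\lambda)$ Hall-dual throughout and only invoke $q,t$-duality of $(P_\lambda,Q_\lambda)$ at the very end when substituting into the generic formula, since $v_\lambda[\tfrac{1-z}{1-t}]$ with $v_\lambda=Q_\lambda(q,t)$ is precisely the content of \autoref{Maceval}. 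Everything else — comparing degree-$n$ homogeneous parts, the $t=q$ arithmetic — is routine.
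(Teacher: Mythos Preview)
Your approach is correct and matches the paper's exactly: substitute $Y=\tfrac{1-z}{1-t}$ into the $(q,t)$-Cauchy identity \eqref{eqn:qtcauchy}, extract the degree-$n$ homogeneous part in $X$, then apply \autoref{Maceval} with $(u_\lambda,v_\lambda)=(P_\lambda,Q_\lambda)$ for \eqref{h_nPlemma} and set $t=q$ for \eqref{h_nslemma}. Your instinct that the first identity really wants $q,t$-dual (rather than Hall-dual) bases is correct --- that is precisely the hypothesis under which \eqref{eqn:qtcauchy} is stated, and the ``In particular'' indeed uses the $q,t$-dual pair $(P_\lambda,Q_\lambda)$.
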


		\begin{proposition}\label{k_n_mu}
			\begin{eqnarray}\label{eqn:k_n_mu}
				k_{(n),\mu}(q,t) = t^{n(\mu)}\prod_{x \in \mu}(1-q^{a'(x)+1}t^{-l'(x)})
			\end{eqnarray}  
			
		\end{proposition}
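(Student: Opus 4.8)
The plan is to reduce the computation to \autoref{h_nPlemma}, which already records the expansion of $h_n\bigl[X\tfrac{1-z}{1-q}\bigr]$ in the Macdonald $P$-basis, combined with the relation $J_\mu = c'_\mu Q_\mu$ and the $(q,t)$-duality of the bases $(P_\lambda)$ and $(Q_\lambda)$.

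First, since $s_{(n)}=h_n$ and $J_\mu(q,t)=c'_\mu(q,t)Q_\mu(q,t)$ by \autoref{eqn:Jdefn}, the definition \autoref{eqn:kdefn} of $k_{(n),\mu}$ gives
\[
k_{(n),\mu}(q,t)=\langle h_n , J_\mu(q,t)\rangle_{q,t}=c'_\mu(q,t)\,\langle h_n , Q_\mu(q,t)\rangle_{q,t}.
\]
Because $(P_\lambda(q,t))$ and $(Q_\lambda(q,t))$ are dual with respect to $\langle\ ,\ \rangle_{q,t}$, the factor $\langle h_n , Q_\mu(q,t)\rangle_{q,t}$ is exactly the coefficient of $P_\mu(q,t)$ when $h_n$ is expanded in the $P$-basis. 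To obtain that coefficient I would specialize the complex variable $z$ to $q$ in \autoref{h_nPlemma}; then the plethystic alphabet $X\tfrac{1-z}{1-q}$ collapses to $X$, so that
\[
h_n=\sum_{\lambda\vdash n}\ \prod_{x\in\lambda}\frac{t^{l'(x)}-q^{a'(x)+1}}{1-q^{a(x)+1}t^{l(x)}}\ P_\lambda[X;q,t],
\]
whence $\langle h_n , Q_\mu(q,t)\rangle_{q,t}=\prod_{x\in\mu}\dfrac{t^{l'(x)}-q^{a'(x)+1}}{1-q^{a(x)+1}t^{l(x)}}$.

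Finally, multiplying by $c'_\mu(q,t)=\prod_{x\in\mu}(1-q^{a(x)+1}t^{l(x)})$ cancels every denominator, giving
\[
k_{(n),\mu}(q,t)=\prod_{x\in\mu}\bigl(t^{l'(x)}-q^{a'(x)+1}\bigr),
\]
and pulling $t^{l'(x)}$ out of each factor together with $n(\mu)=\sum_{x\in\mu}l'(x)$ turns this into $t^{n(\mu)}\prod_{x\in\mu}(1-q^{a'(x)+1}t^{-l'(x)})$, as claimed.

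The only points that require a little care, rather than any genuine obstacle, are the legitimacy of the specialization $z=q$ (valid since \autoref{h_nPlemma} is an identity of rational functions in $z$, and $p_k\bigl[X\tfrac{1-z}{1-q}\bigr]=p_k(X)\tfrac{1-z^k}{1-q^k}\to p_k(X)$ as $z\to q$) and the fact that reading off a single coefficient by pairing against $Q_\mu$ is justified precisely by the $(q,t)$-duality of $(P_\lambda)$ and $(Q_\lambda)$; everything else is bookkeeping with arms, legs, coarms and colegs.
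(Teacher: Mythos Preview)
Your argument is correct and follows exactly the approach indicated in the paper: substitute $z=q$ in \autoref{h_nPlemma}, use $s_{(n)}=h_n$, and combine \autoref{eqn:Jdefn} with \autoref{eqn:kdefn} and the $(q,t)$-duality of $(P_\lambda)$ and $(Q_\lambda)$. You have simply written out the details that the paper compresses into a single sentence.
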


		\begin{proof}
			By substituting $z=q$ in \autoref{h_nPlemma} and using \autoref{eqn:Jdefn}, \autoref{eqn:kdefn} and the fact that $s_{(n)} = h_n$.	
		\end{proof}
		We can use Lemma \autoref{h_nlemma} to get another proof of the following result of \cite{Yoo12} in the dual formalism.  
		
		\begin{proposition}\label{klambda1nprop}
			\begin{eqnarray}
				k_{\lambda,1^n}(q,t) &=& \dfrac{t^{n(\lambda')}[n]_t!}{\prod_{x \in \lambda}[h(x)]_t} \prod_{x \in \lambda}(1-t^{-c(x)}q) \\ & = &  K_{\lambda,1^n}(t) \prod_{x \in \lambda} (1-t^{-c(x)}q) 
			\end{eqnarray} where $[j]_t = \dfrac{1-t^j}{1-t}$ for $j\in \mathbb{Z}_{\geq 1}$ and $[n]_t! = [n]_t[n-1]_t\ldots[1]_t$ 
		\end{proposition}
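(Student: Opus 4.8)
The plan is to use the duality relation \autoref{Jduality} to move the computation onto the single row $(n)$, where \autoref{Q_n} supplies an explicit plethystic formula, and then to recognize what is left as a principal specialization of a single Schur function evaluated through Macdonald's identity \autoref{Maceval}. First I would rewrite $k_{\lambda,1^n}(q,t)=\langle J_{1^n}(q,t),s_\lambda\rangle_{q,t}$ and apply \autoref{Jduality} with the row $(n)$ in the role of $\lambda$ and $\lambda'$ in the role of $\mu$, obtaining $k_{\lambda,1^n}(q,t)=\langle J_{(n)}(t,q),s_{\lambda'}\rangle$, the ordinary Hall pairing. By \autoref{eqn:Jdefn}, $J_{(n)}(q,t)=c'_{(n)}(q,t)\,Q_{(n)}(q,t)$; every box of the row $(n)$ has leg $0$ and arm in $\{0,1,\dots,n-1\}$, so $c'_{(n)}(q,t)=\prod_{k=1}^{n}(1-q^{k})$, while \autoref{Q_n} gives $Q_{(n)}(q,t)=h_n\!\left[X\tfrac{1-t}{1-q}\right]$. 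Swapping $q$ and $t$ and pairing with $s_{\lambda'}$, the Cauchy identity $h_n[XY]=\sum_{\mu\vdash n}s_\mu[X]\,s_\mu[Y]$ (the $(s_\lambda)$ are Hall self-dual) yields
\[
k_{\lambda,1^n}(q,t)=\Big(\prod_{k=1}^{n}(1-t^{k})\Big)\,s_{\lambda'}\!\left[\tfrac{1-q}{1-t}\right].
\]

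It then remains to evaluate $s_{\lambda'}\!\left[\tfrac{1-q}{1-t}\right]$. Specializing \autoref{Maceval} at $q=t$, where $P_\mu(\,\cdot\,;t,t)=s_\mu$, gives $s_\mu\!\left[\tfrac{1-z}{1-t}\right]=\prod_{s\in\mu}\dfrac{t^{l'(s)}-z\,t^{a'(s)}}{1-t^{h(s)}}$. Applying this with $\mu=\lambda'$ and $z=q$, and transferring the box statistics back to $\lambda$ by conjugation (under which $a'_{\lambda'}\leftrightarrow l'_{\lambda}$, $l'_{\lambda'}\leftrightarrow a'_{\lambda}$, and $h_{\lambda'}=h_\lambda$), then using $c(x)=a'(x)-l'(x)$ and $\sum_{x\in\lambda}a'(x)=n(\lambda')$, one arrives at
\[
s_{\lambda'}\!\left[\tfrac{1-q}{1-t}\right]=t^{\,n(\lambda')}\prod_{x\in\lambda}\frac{1-t^{-c(x)}q}{1-t^{h(x)}}.
\]
Substituting this into the previous display and observing that $\prod_{k=1}^{n}(1-t^{k})$ and $\prod_{x\in\lambda}(1-t^{h(x)})$ each contain $n=|\lambda|$ factors of $(1-t)$, so that their ratio equals $[n]_t!\big/\prod_{x\in\lambda}[h(x)]_t$, gives the first formula of the proposition. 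The second formula follows by setting $q=0$: the product $\prod_{x\in\lambda}(1-t^{-c(x)}q)$ collapses to $1$, so $k_{\lambda,1^n}(0,t)=t^{\,n(\lambda')}[n]_t!\big/\prod_{x\in\lambda}[h(x)]_t$, and since $k_{\lambda,\mu}(0,t)=K_{\lambda,\mu}(t)$ (see \autoref{k=K2c'} and the remark just after it) this identifies the $q$-independent prefactor as the Kostka--Foulkes polynomial $K_{\lambda,1^n}(t)$.

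I do not expect a genuine obstacle; the one place demanding care is the conjugation bookkeeping in the evaluation of $s_{\lambda'}\!\left[\tfrac{1-q}{1-t}\right]$ — correctly transposing the arm, leg, coarm and coleg data when passing between $\lambda'$ and $\lambda$ in \autoref{Maceval}, and checking that its $q\to t$ specialization is legitimate. (Alternatively one can bypass \autoref{Jduality}: since $P_{1^n}=e_n$ and $c_{1^n}(q,t)=\prod_{k=1}^{n}(1-t^{k})$, one has $k_{\lambda,1^n}(q,t)=\big(\prod_{k=1}^{n}(1-t^{k})\big)\langle s_\lambda,e_n\rangle_{q,t}$, and the dual Cauchy identity $e_n[XY]=\sum_{\mu}s_\mu[X]\,s_{\mu'}[Y]$ again reduces this to $s_{\lambda'}\!\left[\tfrac{1-q}{1-t}\right]$.)
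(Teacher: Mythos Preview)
Your proposal is correct and follows essentially the same route as the paper: both use \autoref{Jduality} to pass from $k_{\lambda,1^n}$ to the Hall pairing $\langle J_{(n)},s_{\lambda'}\rangle$, identify $Q_{(n)}$ with $h_n\bigl[X\tfrac{1-t}{1-q}\bigr]$ via \autoref{Q_n}, extract the Schur coefficient as a principal specialization via \autoref{Maceval} at $q=t$, and then transpose the coarm/coleg data from $\lambda'$ back to $\lambda$. The only cosmetic difference is the order of operations: the paper first packages Cauchy plus \autoref{Maceval} into the ready-made expansion \autoref{h_nslemma} and applies duality at the end, whereas you apply duality first and then invoke Cauchy and \autoref{Maceval} directly; the conjugation bookkeeping you flag as the delicate step is exactly the step the paper also carries out.
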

		
		\begin{proof}
			Substituting $z=t$ in \autoref{h_nslemma} we get   \begin{eqnarray}
				h_n\bigg[X\dfrac{1-t}{1-q}\bigg] = \sum_{\lambda \vdash n} \prod_{x \in \lambda} \dfrac{q^{l'(x)}-q^{a'(x)}t}{1-q^{h(x)}}s_{\lambda}[X]
			\end{eqnarray}
			By \autoref{Q_n} the left hand side is $Q_n(q,t)$, and so the product in front of $s_{\lambda}[X]$ on the right hand side is $\langle Q_n(q,t),s_{\lambda} \rangle$.\newline Observe that $c'_{(n)}(q,t) = (q;q)_n$ where by $(a;x)_n$ we mean the product $\prod_{i=0}^{n-1}(1-ax^{i})$. Since $c'_{(n)}Q_n = J_n$, \autoref{Jduality} gives: \begin{eqnarray}
				k_{\lambda',1^n}(q,t) = (t;t)_n \prod_{x \in \lambda} \dfrac{t^{l'(x)}-t^{a'(x)}q}{1-t^{h(x)}} 
			\end{eqnarray}
			Since conjugation interchanges coleg lengths and coarm lengths, and the set of hook lengths remains unchanged, \begin{eqnarray}
				k_{\lambda,1^n}(q,t) & = & (t;t)_n \prod_{x \in \lambda} \dfrac{t^{a'(x)}-t^{l'(x)}q}{1-t^{h(x)}} \\ & = & \dfrac{t^{n(\lambda')}(t;t)_n}{\prod_{x \in \lambda}(1-t^{h(x)})} \prod_{x \in \lambda}(1-t^{-c(x)}q) 
			\end{eqnarray}
			Since $k_{\lambda,\mu}(0,t) = K_{\lambda,\mu}(t)$, the result follows.
		\end{proof}

		\begin{corollary}\label{hagforroworcol}
			\emph{\ref{hagdualconj}} holds when $\lambda$ is a row or when $\mu$ is a column. 
		\end{corollary}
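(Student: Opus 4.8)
The plan is to substitute $q=t^k$ directly into the two explicit formulas just established --- \autoref{k_n_mu} for the row case $\lambda=(n)$ and \autoref{klambda1nprop} for the column case $\mu=(1^n)$ --- and to verify positivity one factor at a time. A preliminary remark I would record is that in both cases $|\lambda|=|\mu|=n$, so we must divide by $(1-t)^n$, which matches the $n$ boxes appearing in each product.

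For the column case, \autoref{klambda1nprop} gives $k_{\lambda,1^n}(t^k,t)=K_{\lambda,1^n}(t)\prod_{x\in\lambda}\bigl(1-t^{\,k-c(x)}\bigr)$, and since $K_{\lambda,1^n}(t)$ is a Kostka--Foulkes polynomial it already lies in $\mathbb{Z}_{\ge 0}[t]$ (\autoref{specializations}); so everything reduces to the product. Here I would split on the size of $k$. Since $c(x)\le\lambda_1-1$ for every box, if $k\ge\lambda_1$ then every exponent $k-c(x)$ is $\ge1$, so each factor equals $(1-t)(1+t+\cdots+t^{\,k-c(x)-1})$, and after dividing by $(1-t)^n$ one is left with a product of polynomials with nonnegative coefficients, still multiplied by $K_{\lambda,1^n}(t)\in\mathbb{Z}_{\ge 0}[t]$. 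If instead $k\le\lambda_1-1$, then the box $(1,k+1)$ lies in $\lambda$ and has content exactly $k$, so the factor $1-t^{0}=0$ appears and $k_{\lambda,1^n}(t^k,t)=0$, which is trivially in $\mathbb{Z}_{\ge 0}[t]$.

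The row case I would handle in exactly the same shape. From \autoref{k_n_mu}, $k_{(n),\mu}(t^k,t)=t^{n(\mu)}\prod_{x=(r,c)\in\mu}\bigl(1-t^{\,kc-r+1}\bigr)$ with $t^{n(\mu)}\in\mathbb{Z}_{\ge 0}[t]$. If every box $(r,c)\in\mu$ satisfies $r\le kc$, then every exponent is $\ge1$ and the same cyclotomic bookkeeping finishes the proof after dividing by $(1-t)^n$. Otherwise there is a box $(r_0,c_0)\in\mu$ with $r_0\ge kc_0+1$; since $\mu$ is a partition this forces $\mu'_{c_0}\ge r_0\ge kc_0+1$, so the box $(kc_0+1,c_0)$ is also in $\mu$, and its exponent $kc_0-(kc_0+1)+1$ equals $0$ --- so once more the product vanishes.

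The one point that needs genuine care, and which I would flag as the main obstacle, is that \autoref{k_n_mu} and \autoref{klambda1nprop} both involve negative powers of $t$ (through $t^{-l'(x)}$ and $t^{-c(x)}$), so a priori it is not even clear that $k_{\lambda,\mu}(t^k,t)$ is a polynomial, let alone one with nonnegative coefficients. The dichotomy above is precisely the mechanism that resolves this: whenever an exponent would be negative, the argument exhibits a box whose exponent is exactly $0$, collapsing the whole expression to $0$. (Alternatively one could invoke that $k_{\lambda,\mu}(q,t)\in\mathbb{Z}[q,t]$, but the box-locating argument is self-contained and equally short.) Everything else is routine --- the identity $\tfrac{1-t^m}{1-t}=1+t+\cdots+t^{m-1}$ for $m\ge1$, together with the nonnegativity of $t^{n(\mu)}$ and of the Kostka--Foulkes polynomial $K_{\lambda,1^n}(t)$ --- so I expect no further difficulty.
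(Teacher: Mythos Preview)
Your proof is correct and follows essentially the same approach as the paper's own proof: substitute $q=t^k$ into the explicit formulas of \autoref{k_n_mu} and \autoref{klambda1nprop}, then argue that either every exponent is positive (giving a product of $t$-integers) or some exponent is exactly $0$ (forcing the whole product to vanish). The only cosmetic difference is that the paper phrases the vanishing case as an intermediate-value argument (``the exponent decreases by $1$ along a row/column and starts non-negative, so it must hit $0$ before becoming negative''), whereas you locate the offending box explicitly; these are the same observation.
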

		\begin{proof}
			
			When $\lambda = (n)$ and $\mu$ is arbitrary, we have from \autoref{eqn:k_n_mu} \begin{eqnarray}
				k_{(n),\mu}(t^k,t) = t^{n(\mu)}\prod_{x \in \mu}(1-t^{k(a'(x)+1)-l'(x)}) \quad \forall \ k \in \mathbb{Z}_{\geq 0}
			\end{eqnarray}  
			Since the quantity $k(a'(x)+1)-l'(x)$ decreases by $1$ down a column and it is non-negative on the top row, if it is negative for some box then it must also attain the value $0$ for some box above it. Hence \begin{eqnarray}
				\dfrac{k_{(n),\mu}(t^k,t)}{(1-t)^n} = \begin{cases}
					t^{n(\mu)} \prod_{x \in \mu} [k(a'(x)+1)-l'(x)]_t & \text{ if } l(\mu) \leq k , \\ \quad \quad \quad 0 & \text{otherwise}
				\end{cases}  
			\end{eqnarray} where $[m]_t = 1+t+...+t^{m-1}$. In both cases $\dfrac{k_{(n),\mu}(t^k,t)}{(1-t)^n} \in \mathbb{Z}_{\geq 0}[t]$.
			
			Next, when $\mu = 1^n$ and $\lambda$ is arbitrary, we get \begin{eqnarray}
				k_{\lambda,1^n}(t^k,t) = K_{\lambda,1^n}(t) \prod_{x \in \lambda}(1-t^{k-c(x)})
			\end{eqnarray}
			Along a row the quantity $k-c(x)$ decreases by $1$ as one moves from left to right, and for the first box in the row (in the first column) $k-c(x)\geq 0$. So if  $k-c(x)<0$  for some $x\in \lambda$ then in that row there is a box $y$ such that $k-c(y)=0$, implying $k_{\lambda,1^n}(t^k,t)=0$. Otherwise $k-c(x)\geq 0$ for all $x\in \lambda$, and hence \begin{eqnarray}
				\dfrac{k_{\lambda,1^n}(t^k,t)}{(1-t)^n} = \begin{cases}
					K_{\lambda,1^n}(t) \prod_{x \in \lambda}[k-c(x)]_t & \text{if } \lambda_1 \leq k \\ \quad\quad\quad 0 & \text{otherwise} 
				\end{cases} 
			\end{eqnarray}
			Since $K_{\lambda,\mu}(t)\in \mathbb{Z}_{\geq 0}[t]$, $\dfrac{k_{\lambda,1^n}(t^k,t)}{(1-t)^n} \in \mathbb{Z}_{\geq 0}[t]$.
			
		\end{proof}

		\section{Multiplication}
		
		We state a general lemma on posets that will be useful in this section. 
		
		\begin{lemma}\label{posetlemma}
			Let $M$ be an upper uni-triangular matrix indexed by a partially ordered set $P$, i.e, for all $a\in P$,   $M_{a,a}=1$ and $M_{a,b}=0$ unless $b\leq a$. For an interval $I$ in $P$ let $M|_{I}=(M_{a,b})_{a,b\in I}$ denote the matrix restricted to the interval $I$. Then for any interval $J$ containing $a,b$, we have $(M^{-1})_{a,b} = (M|_{J})^{-1}_{a,b}$.  
		\end{lemma}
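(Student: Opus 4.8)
The plan is to invert $M$ by forward substitution performed along the partial order, and then to observe that the recursion producing the single entry $(M^{-1})_{a,b}$ refers only to matrix entries indexed by elements of the interval $[b,a]=\{c\in P: b\leq c\leq a\}$. Since any interval $J$ of $P$ that contains both $a$ and $b$ automatically contains all of $[b,a]$, the very same recursion computes $(M|_J)^{-1}_{a,b}$, so the two quantities must agree.

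First I would dispose of the case $a\not\geq b$. The order on $J$ is the restriction of the order on $P$, so $M|_J$ is again upper uni-triangular and $a\not\geq b$ in $J$ as well; since the inverse of an upper uni-triangular matrix is again upper uni-triangular, both $(M^{-1})_{a,b}$ and $(M|_J)^{-1}_{a,b}$ vanish. So assume $a\geq b$, and write the interval as $J=[x,y]$; from $x\leq b\leq a\leq y$ we get $[b,a]\subseteq J$. Now fix the row index $a$, set $N=M^{-1}$, and expand the identity $(NM)_{a,c}=\delta_{a,c}$ for $c\leq a$. Because $N_{a,d}=0$ unless $d\leq a$ and $M_{d,c}=0$ unless $c\leq d$, the sum runs only over $d$ with $c\leq d\leq a$; isolating the $d=c$ term (where $M_{c,c}=1$) gives $N_{a,a}=1$ and
\[
N_{a,c}=-\sum_{c< d\leq a}N_{a,d}\,M_{d,c}\qquad(c< a).
\]

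Restricting attention to $c\in[b,a]$, every index $d$ occurring on the right-hand side lies in $(c,a]\subseteq[b,a]$, and $M_{d,c}$ is then indexed by a pair from $[b,a]$; hence this recursion, solved by downward induction on $c$ starting at $c=a$, determines $N_{a,c}$ for all $c\in[b,a]$ using only the submatrix $M|_{[b,a]}$. Running the identical computation inside $M|_J$ — whose inverse is again upper uni-triangular for the induced order — yields the same recursion for $(M|_J)^{-1}_{a,c}$, $c\in[b,a]$, since all indices appearing lie in $[b,a]\subseteq J$ and $M|_J$ agrees with $M$ there; by uniqueness of the solution, $(M|_J)^{-1}_{a,b}=N_{a,b}=(M^{-1})_{a,b}$. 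The only delicate point — and the closest thing to an obstacle in so short an argument — is the bookkeeping: verifying that the collapsed index ranges genuinely stay inside $[b,a]$, and noting that the recursion terminates, i.e. that the relevant intervals of $P$ are finite. The latter holds throughout this paper because dominance order only compares partitions of a fixed size.
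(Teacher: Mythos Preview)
Your proof is correct and follows essentially the same approach as the paper: invert $M$ by the forward-substitution recursion $N_{a,c}=-\sum_{c<d\leq a}N_{a,d}M_{d,c}$ and observe that all indices appearing lie in $[b,a]\subseteq J$. You are simply more explicit than the paper, which compresses the argument to one sentence, and you additionally dispose of the $a\not\geq b$ case and note the finiteness needed for the recursion to terminate.
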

		
		\begin{proof}
			Let $N=M^{-1}$. The lemma follows since one can simply write down the solution to the equations \begin{eqnarray}
				\delta_{a,b} = \sum_{a\leq c \leq b}N_{a,c}M_{c,b}
			\end{eqnarray}
			recursively, starting from $b=a$.
		\end{proof}

		Let $\lambda^{1}, \lambda^2$ be two partitions such that the least non-zero part of $\lambda^1$ is atleast the largest part of $\lambda^2$. In this case by $(\lambda^1, \lambda^2)$ we mean the partition $(\lambda^1_1,...,\lambda^1_{l(\lambda^1)},\lambda^2_1,...)$.
		
		\begin{lemma}\label{rowmult}
			Let $\lambda = (\lambda^1,\lambda^2)$ and $\mu = (\mu^1,\mu^2)$ where $\lambda^j \geq\mu^j $ (in particular, $|\lambda^j|=|\mu^j|$) for $j=1,2$  and $l(\lambda^1)=l(\mu^1)$. 
			
			Then for $i=1,2$, \begin{eqnarray}
				(K^{(i)})^{\pm 1}_{\lambda,\mu} = (K^{(i)})^{\pm 1}_{\lambda^1,\mu^1}(K^{(i)})^{\pm 1}_{\lambda^2,\mu^2}
			\end{eqnarray}
			
			
		\end{lemma}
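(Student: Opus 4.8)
The plan is to prove the identity for $K^{(1)}$ directly from Macdonald's tableau formula \autoref{eq:K1=sumpsi}, to observe that the same argument with all weights erased gives the corresponding identity for ordinary Kostka numbers, and then to deduce the remaining three identities (the $K^{(2)}$ statement and the two inverses) by a formal computation with Kronecker products of matrices restricted to the interval $[\mu,\lambda]$, using \autoref{posetlemma} and the factorization $K=K^{(2)}K^{(1)}$ from \autoref{K=K2K1}.

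First I would set $r=l(\lambda^1)=l(\mu^1)$, so that $\lambda_1+\dots+\lambda_r=|\lambda^1|=|\mu^1|=\mu_1+\dots+\mu_r$. In any $T\in SSYT(\lambda,\mu)$, an entry equal to $i$ lying in row $j$ forces $j\le i$, so all $|\mu^1|$ boxes carrying an entry $\le r$ lie in the first $r$ rows; since those rows contain exactly $|\lambda^1|=|\mu^1|$ boxes, the first $r$ rows of $T$ are precisely the boxes with entries $\le r$. Thus $T$ splits into $T^1\in SSYT(\lambda^1,\mu^1)$ (the top $r$ rows) and $T^2\in SSYT(\lambda^2,\mu^2)$ (the remaining rows, with $r$ subtracted from every entry), and conversely any such pair stacks to a tableau of shape $\lambda$ and content $\mu$ because the shifted entries of $T^2$ exceed $r$, which is at least every entry of $T^1$. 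This yields a bijection $SSYT(\lambda,\mu)\cong SSYT(\lambda^1,\mu^1)\times SSYT(\lambda^2,\mu^2)$.

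Next I would check that $\psi_T=\psi_{T^1}\psi_{T^2}$ under this bijection. For $i\le r$ the shapes $T_{\le i}$ and $T_{\le i-1}$ literally coincide with $(T^1)_{\le i}$ and $(T^1)_{\le i-1}$, so those factors agree. For $i>r$ the horizontal strip $T_{\le i}/T_{\le i-1}$ lies in rows $>r$, and the crucial point is that every box $x=(a,c)$ contributing to $\psi_{T_{\le i}/T_{\le i-1}}$ has $a>r$ and $c\le\lambda^2_1\le\lambda^1_r$, the last inequality being exactly the assertion that $\lambda=(\lambda^1,\lambda^2)$ is a genuine partition. Hence column $c$ of $\lambda^1\subseteq T_{\le i-1}$ is complete, so deleting the first $r$ rows leaves the arm length of $x$ unchanged and subtracts $r$ from both $a$ and the column height $\lambda'_c$, hence leaves the leg length unchanged — and the same holds for the larger shape $T_{\le i}$. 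Consequently each factor of $\psi_{T_{\le i}/T_{\le i-1}}$ equals the corresponding factor of $\psi_{(T^2)_{\le i-r}/(T^2)_{\le i-r-1}}$, and multiplying over all $i$ gives $\psi_T=\psi_{T^1}\psi_{T^2}$. Summing \autoref{eq:K1=sumpsi} over the bijection above then gives $K^{(1)}_{\lambda,\mu}=K^{(1)}_{\lambda^1,\mu^1}K^{(1)}_{\lambda^2,\mu^2}$, and setting every $\psi$ equal to $1$ gives the Kostka identity $K_{\lambda,\mu}=K_{\lambda^1,\mu^1}K_{\lambda^2,\mu^2}$; both hold verbatim with any comparable pair $\nu\ge\tau$ cut at the same row $r$ in place of $\lambda\ge\mu$.

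Finally I would note that $(\nu^1,\nu^2)\mapsto(\nu^1,\nu^2)$ is an isomorphism of posets $[\mu^1,\lambda^1]\times[\mu^2,\lambda^2]\to[\mu,\lambda]$ for dominance order: every element of $[\mu,\lambda]$ has $r$-th partial sum $|\mu^1|$ and so may be cut at row $r$, the parts in rows $1,\dots,r$ remain $\ge\lambda^1_r\ge\lambda^2_1$ so the concatenation is a legitimate partition lying in $[\mu,\lambda]$, and comparability of concatenations is equivalent to comparability in each factor. Under this identification the previous step says $K^{(1)}|_{[\mu,\lambda]}=K^{(1)}|_{[\mu^1,\lambda^1]}\otimes K^{(1)}|_{[\mu^2,\lambda^2]}$ and $K|_{[\mu,\lambda]}=K|_{[\mu^1,\lambda^1]}\otimes K|_{[\mu^2,\lambda^2]}$, where $M|_I$ denotes restriction to the interval $I$ and $\otimes$ is the Kronecker product. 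Since restriction to $[\mu,\lambda]$ commutes with multiplication of upper-unitriangular matrices (only indices between the two endpoints contribute to a product) and with inversion (\autoref{posetlemma}), and since $(A\otimes B)(C\otimes D)=AC\otimes BD$ and $(A\otimes B)^{-1}=A^{-1}\otimes B^{-1}$, the relation $K=K^{(2)}K^{(1)}$ restricts to $K^{(2)}|_{[\mu,\lambda]}=K^{(2)}|_{[\mu^1,\lambda^1]}\otimes K^{(2)}|_{[\mu^2,\lambda^2]}$, and similarly for $(K^{(1)})^{-1}$ and $(K^{(2)})^{-1}$. Reading off the $(\lambda,\mu)$-entry in each case, and using \autoref{posetlemma} once more to replace the interval-restricted inverses by the global ones, gives $(K^{(i)})^{\pm1}_{\lambda,\mu}=(K^{(i)})^{\pm1}_{\lambda^1,\mu^1}(K^{(i)})^{\pm1}_{\lambda^2,\mu^2}$ for $i=1,2$. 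The one genuinely technical step is the arm/leg bookkeeping in the third paragraph, and it rests entirely on the inequality $\lambda^1_r\ge\lambda^2_1$; everything else is formal.
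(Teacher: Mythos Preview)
Your proof is correct and follows essentially the same approach as the paper's: establish the tableau bijection $SSYT(\lambda,\mu)\cong SSYT(\lambda^1,\mu^1)\times SSYT(\lambda^2,\mu^2)$, verify $\psi_T=\psi_{T^1}\psi_{T^2}$ to obtain the $K^{(1)}$ identity, then use the poset isomorphism $[\mu,\lambda]\cong[\mu^1,\lambda^1]\times[\mu^2,\lambda^2]$ together with Kronecker products, \autoref{K=K2K1}, and \autoref{posetlemma} to deduce the remaining cases. Your write-up is in fact somewhat more explicit than the paper's in justifying the arm/leg invariance (via the inequality $c\le\lambda^2_1\le\lambda^1_r$ ensuring column $c$ is complete in the first $r$ rows) and in verifying that concatenation gives a genuine poset isomorphism, but the argument is the same.
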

		
		\begin{proof}
			Let $T \in SSYT(\lambda,\mu)$. Since $l(\lambda^1)=l(\mu^1)$ and $|\lambda^1|=|\mu^1|$, in the first $l(\lambda^1)$ rows of $T$ the content must be $\mu^1$. We can break the tableau $T$ into two pieces; let $T^1$ comprise the first $l(\lambda^1)$ rows, and $T^2$ be the tableau formed by the remaining rows of $T$ in which we subtract $l(\lambda^1)$ from each entry. It is clear we get a bijection \begin{eqnarray}
				SSYT(\lambda,\mu) & \rightarrow & SSYT(\lambda^1,\mu^1) \times SSYT(\lambda^2,\mu^2) \nonumber \\ T & \mapsto & (T^1,T^2) \nonumber
			\end{eqnarray} 
			
			In particular, $K_{\lambda,\mu} = K_{\lambda^1,\mu^1}K_{\lambda^2,\mu^2}$.

			For $1 \leq i \leq l(\lambda^1)$ the horizontal strips $T_{\leq i}/T_{\leq i-1}$ and $T^1_{\leq i}/T^1_{\leq i-1}$ are the same, so $\psi_{T_{\leq i}/T_{\leq i-1}} = \psi_{T^1_{\leq i}/T^1_{\leq i-1}}$. 
			
			For $i > l(\lambda^1)$ the horizontal strips $T_{\leq i}/T_{\leq i-1}$ differ from $T^2_{\leq i-l(\lambda^1)}/T^2_{\leq i-l(\lambda^1)-1}$ in the first $l(\lambda^1)$ rows, but none of those boxes contribute to $\psi_{T_{\leq i}/T_{\leq i-1}}$, and the contribution from the remaining boxes are same since the arms and legs are the same. So $\psi_{T_{\leq i}/T_{\leq i-1}} = \psi_{T^2_{\leq i-l(\lambda^1)}/T^2_{\leq i-l(\lambda^1)-1}}$ 
			
			Therefore the above bijection preserves $\psi$: \begin{eqnarray}
				\psi_T = \psi_{T^1}\psi_{T^2}
			\end{eqnarray}
			
			By \autoref{eq:K1=sumpsi}  we get \begin{eqnarray}
				K^{(1)}_{\lambda,\mu} = K^{(1)}_{\lambda^1,\mu^1}K^{(1)}_{\lambda^2,\mu^2}
			\end{eqnarray}

			Let $\lambda \geq \nu \geq \mu$. Letting $\nu^1 = (\nu_1,...,\nu_{l(\lambda^1)})$ and $\nu^2 = (\nu_{l(\lambda^1)+1},...)$ we have that $\lambda^i \geq \nu^i \geq \mu^i$ for $i=1,2$ and $l(\lambda^1) = l(\nu^1) = l(\mu^1)$. So the interval $[\lambda,\mu]$ in the dominance order becomes $[\lambda,\mu] = [\lambda^1,\mu^1] \times [\lambda^2,\mu^2]$. The matrix $K^{(1)}$ restricted to $[\lambda,\mu]$ is then the tensor product: \begin{eqnarray}
				K^{(1)}|_{[\lambda,\mu]} = K^{(1)}|_{[\lambda^1,\mu^1]} \otimes K^{(1)}|_{[\lambda^2,\mu^2]}   
			\end{eqnarray} In particular, $ 
			K|_{[\lambda,\mu]} = K|_{[\lambda^1,\mu^1]} \otimes K|_{[\lambda^2,\mu^2]}$.
			
			Since inverse of the tensor product of two matrices is the tensor product of their inverses we get 
			\begin{eqnarray}
				(K^{(1)})^{-1}|_{[\lambda,\mu]} = (K^{(1)})^{-1}|_{[\lambda^1,\mu^1]} \otimes (K^{(1)})^{-1}|_{[\lambda^2,\mu^2]}   
			\end{eqnarray}
			
			By \autoref{K=K2K1}

			\begin{eqnarray}
				K^{(2)}|_{[\lambda,\mu]} = K^{(2)}|_{[\lambda^1,\mu^1]} \otimes K^{(2)}|_{[\lambda^2,\mu^2]}  
			\end{eqnarray}
			
			Once again, taking inverses we get \begin{eqnarray}
				(K^{(2)})^{-1}|_{[\lambda,\mu]} = (K^{(2)})^{-1}|_{[\lambda^1,\mu^1]} \otimes (K^{(2)})^{-1}|_{[\lambda^2,\mu^2]}  
			\end{eqnarray}
			

			By \autoref{posetlemma}, we conclude for $i=1,2$ \begin{eqnarray}
				(K^{(i)})^{\pm 1}_{\lambda,\mu} = (K^{(i)})^{\pm 1}_{\lambda^1,\mu^1}(K^{(i)})^{\pm 1}_{\lambda^2,\mu^2}
			\end{eqnarray}
			
		\end{proof}

		Let $\lambda^1, \lambda^2$ be two partitions such that the last non-zero column length of $\lambda^{1}$ is greater equal to the first column length of $\lambda^{2}$. Then by $\lambda^1+\lambda^2$ we denote the partition $(\lambda^1_1+\lambda^2_1,\lambda^1_2+\lambda^2_2,\ldots)$. Note that if $\lambda = \lambda^1+\lambda^2$ then $\lambda' = (\lambda^{1'},\lambda^{2'})$ in the notation of \autoref{rowmult}. By $\lambda-\lambda^1$ we mean the partition $\lambda^2$. 		
		
		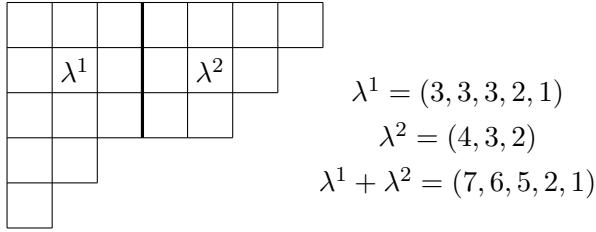
\begin{figure}[h]
			\centering
			\begin{tikzpicture}[yscale=-0.6, xscale=0.6]
				\draw (0,0) -- (7,0);
				\draw (0,1) -- (7,1);
				\draw (0,2) -- (6,2);
				\draw (0,3) -- (5,3);
				\draw (0,4) -- (2,4);
				\draw (0,5) -- (1,5);
				\draw (0,0) -- (0,5);
				\draw (1,0) -- (1,5);
				\draw (2,0) -- (2,4);
				\draw[very thick] (3,0) -- (3,3);
				\draw (4,0) -- (4,3);
				\draw (5,0) -- (5,3);
				\draw (6,0) -- (6,2);
				\draw (7,0) -- (7,1);
				
				\draw (1.5,1.5) node{$\lambda^1$};
				\draw (4.5,1.5) node{$\lambda^2$};
				
				\draw (10,2) node{$\lambda^1=(3,3,3,2,1)$};
				\draw (10,3) node{$\lambda^2=(4,3,2)$};
				\draw (10,4) node{$\lambda^1+\lambda^2=(7,6,5,2,1)$};
			\end{tikzpicture}	
			
			\caption{Example of $\lambda^1+\lambda^2$}
		\end{figure}
		
		\begin{lemma}\label{colmult}
			Let $\lambda,\mu \in Par$ such that $\lambda = \lambda^1+\lambda^2$ and $\mu = \mu^1+\mu^2$ with $\lambda^1 \geq \mu^1$, $\lambda^2 \geq \mu^2$ and $\lambda^1_1 = \mu^1_1$. Then for $i=1,2$, \begin{eqnarray}
				(K^{(i)})^{\pm 1}_{\lambda,\mu} = (K^{(i)})^{\pm 1}_{\lambda^1,\mu^1} (K^{(i)})^{\pm 1}_{\lambda^2,\mu^2}
			\end{eqnarray} 
			
			
		\end{lemma}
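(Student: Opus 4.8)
The plan is to reduce \autoref{colmult} to \autoref{rowmult} by conjugating all partitions, using for the $K^{(2)}$ statement the reciprocity of \autoref{Kdual}. Put $m=\lambda^1_1=\mu^1_1$. Since $\lambda=\lambda^1+\lambda^2$ we have $\lambda'=(\lambda^{1'},\lambda^{2'})$, and likewise $\mu'=(\mu^{1'},\mu^{2'})$; moreover the conditions making $\lambda^1+\lambda^2$ and $\mu^1+\mu^2$ legitimate column sums are exactly the conditions that $(\lambda^{1'},\lambda^{2'})$ and $(\mu^{1'},\mu^{2'})$ are legitimate concatenations in the sense of \autoref{rowmult}. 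Conjugation reverses the dominance order, so $\lambda^j\geq\mu^j$ becomes $\mu^{j'}\geq\lambda^{j'}$, while $\lambda^1_1=\mu^1_1$ becomes $\ell(\lambda^{1'})=\ell(\mu^{1'})=m$. Applying \autoref{rowmult} to the pair $(\mu',\lambda')$ with the decompositions $\mu'=(\mu^{1'},\mu^{2'})$, $\lambda'=(\lambda^{1'},\lambda^{2'})$ therefore gives, for $i=1,2$,
\[
	(K^{(i)})^{\pm 1}_{\mu',\lambda'}(q,t) = (K^{(i)})^{\pm 1}_{\mu^{1'},\lambda^{1'}}(q,t)\,(K^{(i)})^{\pm 1}_{\mu^{2'},\lambda^{2'}}(q,t).
\]

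For $i=2$ this already finishes the proof. As a matrix identity \autoref{Kdual} reads $K^{(2)}_{\alpha,\beta}(q,t)=(K^{(2)})^{-1}_{\beta',\alpha'}(t,q)$, hence also $(K^{(2)})^{-1}_{\alpha,\beta}(q,t)=K^{(2)}_{\beta',\alpha'}(t,q)$. Substituting these two relations into the three factors of the displayed identity (and using $(\gamma')'=\gamma$ and $|\lambda^j|=|\mu^j|$) turns its $(+1)$-form into the $(-1)$-form of colmult evaluated at $(t,q)$, and its $(-1)$-form into the $(+1)$-form; since all quantities are rational functions we may then interchange $q$ and $t$ to obtain $(K^{(2)})^{\pm 1}_{\lambda,\mu}=(K^{(2)})^{\pm 1}_{\lambda^1,\mu^1}(K^{(2)})^{\pm 1}_{\lambda^2,\mu^2}$.

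The $i=1$ case is genuinely different: the reduction above only yields a \emph{row}-multiplicativity of $K^{(1)}$ at the conjugate partitions, so one needs a $K^{(1)}$-analogue of \autoref{Kdual} in order to convert it. Such a relation can be produced by applying the automorphism $\omega_{q,t}$ to the identity $h_\mu\!\left[X\tfrac{1-t}{1-q}\right]=\sum_\lambda K^{(1)}_{\lambda\mu}(q,t)\,Q_\lambda[X;q,t]$ (which records that $(h_\mu[X\tfrac{1-t}{1-q}])$ is $q,t$-dual to $(m_\mu)$ while $(P_\lambda)$ is $q,t$-dual to $(Q_\lambda)$): using $\omega_{q,t}\!\left(h_\mu[X\tfrac{1-t}{1-q}]\right)=e_\mu$ and the standard identity $\omega_{q,t}Q_\lambda(q,t)=P_{\lambda'}(t,q)$, one gets $e_\mu=\sum_\nu K^{(1)}_{\nu'\mu}(q,t)\,P_\nu[X;t,q]$. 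Since $e_r=P_{1^r}$ for all parameters, expanding $e_\mu=\prod_i e_{\mu_i}$ by the dual Pieri rule for the $P_\nu(t,q)$ identifies the numbers $K^{(1)}_{\cdot,\cdot}(q,t)$ with entries of $(K^{(1)})^{-1}$ at arguments $(t,q)$, up to explicit factors; feeding this identification into the displayed identity of the first paragraph then yields colmult for $K^{(1)}$, and via $K=K^{(2)}K^{(1)}$ one also recovers the column-multiplicativity of the ordinary Kostka matrix.

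The hard part is exactly this last step — obtaining a usable reciprocity for $K^{(1)}$, equivalently controlling the dual Pieri expansion of $e_\mu$ in the basis $(P_\nu(t,q))$ — and it is harder than the $K^{(2)}$ case because the monomial basis is not, up to the evident twist, $\omega_{q,t}$-stable. One should also note that the obvious shortcut of mimicking the proof of \autoref{rowmult} with a cut of $T\in SSYT(\lambda,\mu)$ along column $m$ does \emph{not} work: when a row of $\lambda$ runs past column $m$, a box of that row lying in the first $m$ columns can contribute nontrivially to $\psi_T$ in \autoref{eq:K1=sumpsi} while contributing trivially to the matching factor of $\psi_{T^1}$, so the naive column-cut is neither content- nor $\psi$-preserving, and a subtler combinatorial bijection would be required if one wanted a purely tableau-theoretic proof.
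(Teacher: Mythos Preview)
Your treatment of $K^{(2)}$ is correct and is exactly the paper's argument: conjugate, apply \autoref{rowmult} to $(\mu',\lambda')$, and translate back with \autoref{Kdual}.

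The gap is in the $K^{(1)}$ case. The sentence ``identifies the numbers $K^{(1)}_{\cdot,\cdot}(q,t)$ with entries of $(K^{(1)})^{-1}$ at arguments $(t,q)$, up to explicit factors'' is not justified and, as stated, is not true. What your $\omega_{q,t}$ computation actually produces is $e_\mu=\sum_\nu K^{(1)}_{\nu',\mu}(q,t)\,P_\nu(t,q)$, i.e.\ the $P(t,q)$-expansion of $e_\mu$; but $(K^{(1)})^{-1}_{\nu,\mu}(t,q)$ is the coefficient of $P_\nu(t,q)$ in $m_\mu$, not in $e_\mu$. Passing from $e_\mu$ to $m_\mu$ introduces the (non-multiplicative, non-diagonal) transition matrix between the $e$ and $m$ bases, so no clean reciprocity of the form $K^{(1)}_{\lambda,\mu}(q,t)=(K^{(1)})^{-1}_{\mu',\lambda'}(t,q)$ (even ``up to explicit factors'') drops out, and the dual Pieri expansion of $e_\mu$ does not repair this. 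Your own caveat that this is ``the hard part'' is accurate: as written, the $K^{(1)}$ case is not proved.

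The paper avoids this entirely. Once the $K^{(2)}$ identity is known, it holds not just for $(\lambda,\mu)$ but for every pair in the interval $[\lambda,\mu]$, because (via conjugation) $[\lambda,\mu]$ is in bijection with $[\lambda^1,\mu^1]\times[\lambda^2,\mu^2]$ and every $\gamma\in[\lambda,\mu]$ splits as $\gamma^1+\gamma^2$ with $\gamma^1_1=\lambda^1_1$. Hence $K^{(2)}|_{[\lambda,\mu]}=K^{(2)}|_{[\lambda^1,\mu^1]}\otimes K^{(2)}|_{[\lambda^2,\mu^2]}$ as matrices. Specializing $t=1$ gives the same tensor factorization for the ordinary Kostka matrix $K$, and then $K^{(1)}=(K^{(2)})^{-1}K$ (from $K=K^{(2)}K^{(1)}$) inherits the tensor factorization, as does its inverse. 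Invoking \autoref{posetlemma} yields the entrywise statement. This route uses nothing beyond what you already established for $K^{(2)}$ together with the interval argument from \autoref{rowmult}; no $K^{(1)}$-analogue of \autoref{Kdual} is needed.
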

		
		\begin{proof}
			
			Note that $\lambda' = (\lambda^{1'},\lambda^{2'})$ and $\mu' = (\mu^{1'},\mu^{2'})$, $\mu^{1'} \geq \lambda^{1'}$ and $l(\mu^{1'}) = \mu^1_1 = \lambda^1_1 = l(\lambda^{1'})$. Therefore we can apply \autoref{rowmult} to $\mu',\lambda'$. By \autoref{Kdual} 	
			\begin{eqnarray}
				K^{(2)}_{\lambda,\mu}(q,t) & = & (K^{(2)})^{-1}_{\mu',\lambda'}(t,q) \\ & = & (K^{(2)})^{-1}_{\mu^{1'},\lambda^{1'}}(t,q)(K^{(2)})^{-1}_{\mu^{2'},\lambda^{2'}}(t,q) \\ & = & K^{(2)}_{\lambda^1,\mu^1}(q,t)K^{(2)}_{\lambda^2,\mu^2}(q,t) 
			\end{eqnarray}
			
			In particular, \begin{eqnarray}
				K_{\lambda,\mu} = K_{\lambda^1,\mu^1}K_{\lambda^2,\mu^2}	
			\end{eqnarray}
			
			If a partition $\gamma$ satisfies $\lambda \geq \gamma \geq \mu$ then $\mu' \geq \gamma' \geq \lambda'$. By earlier argument, $[\mu',\lambda']=[\mu^{1'},\lambda^{1'}]\times [\mu^{2'},\lambda^{2'}]$. Then $[\lambda,\mu] = [\lambda^1,\mu^1]+[\lambda^2,\mu^2]$ and any $\gamma^1 \in [\lambda^1,\mu^1]$ satisfies $\lambda^1_1 = \gamma^1_1 = \mu^1_1$. Thus \begin{eqnarray}
				K^{(2)}|_{[\lambda,\mu]} = K^{(2)}|_{[\lambda^1,\mu^1]} \otimes K^{(2)}|_{[\lambda^2,\mu^2]}  
			\end{eqnarray}
			As before, we take inverses and use \autoref{K=K2K1}, to prove the statement.   
		\end{proof}

		Let $\lambda^1,\lambda^2,\mu^1,\mu^2$ be as in \autoref{rowmult}. Note that since $l(\lambda^1)= l(\mu^1)$ and $\lambda^1 \geq \mu^1$, we have $\mu^1_{l(\lambda^1)} \geq \lambda^1_{l(\lambda^1)}$. Thus both $\lambda^1$ and $\mu^1$ contain the rectangle $R$ of row length $\lambda^1_{l(\lambda^1)}$ and column length $l(\lambda^1)$. 
		
		\begin{corollary}\label{cor:rectangleinsert}
			Let $\lambda^1,\lambda^2,\mu^1,\mu^2$ be as in \autoref{rowmult}, and $R$ be the rectangular partition of row length $\lambda^1_{l(\lambda^1)}$ and column length $l(\lambda^1)$. If \emph{\ref{hagdualconj}} is true for $\lambda^1-R, \mu^1-R$ and for $\lambda^2,\mu^2$ then it is true for $\lambda,\mu$. 
			
		\end{corollary}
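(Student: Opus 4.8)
The plan is to convert the reduction into an exact product formula for $k_{\lambda,\mu}$ and then check positivity factor by factor. Set $\ell := l(\lambda^1)$ and $m := \lambda^1_{\ell}$, so $R=(m^{\ell})$ and, as observed before the statement, $\lambda^1$ and $\mu^1$ both contain $R$; write $\lambda^1 = R+(\lambda^1-R)$ and $\mu^1 = R+(\mu^1-R)$ in the column-sum notation preceding \autoref{colmult}. I would first record the inequalities
\[
\mu^2_1 \;\leq\; m \;\leq\; \mu^1_{\ell},
\]
the left one because $\lambda^2\geq\mu^2$ forces $\mu^2_1\leq\lambda^2_1$ while $(\lambda^1,\lambda^2)$ being a partition forces $\lambda^2_1\leq\lambda^1_\ell=m$, and the right one because $\lambda^1\geq\mu^1$ with $l(\lambda^1)=l(\mu^1)$ and $|\lambda^1|=|\mu^1|$ forces $\lambda^1_\ell\leq\mu^1_\ell$. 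The same comparison of partial sums gives $\lambda^1-R\geq\mu^1-R$, so \autoref{colmult} applies to $\lambda^1=R+(\lambda^1-R)$, $\mu^1=R+(\mu^1-R)$; since $K^{(2)}_{R,R}=1$ it yields $K^{(2)}_{\lambda^1,\mu^1}=K^{(2)}_{\lambda^1-R,\mu^1-R}$, and combining with \autoref{rowmult},
\[
K^{(2)}_{\lambda,\mu}(q,t)=K^{(2)}_{\lambda^1-R,\mu^1-R}(q,t)\,K^{(2)}_{\lambda^2,\mu^2}(q,t).
\]
Multiplying by the relevant $c'$-factors via \autoref{k=K2c'} (applied to each of the three pairs) turns this into
\[
k_{\lambda,\mu}(q,t)=k_{\lambda^1-R,\mu^1-R}(q,t)\,k_{\lambda^2,\mu^2}(q,t)\,\frac{c'_{\mu}(q,t)}{c'_{\mu^1-R}(q,t)\,c'_{\mu^2}(q,t)}.
\]

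The heart of the argument is to identify the last quotient as the honest polynomial product $\prod_{i=1}^{\ell}\prod_{c=1}^{m}\bigl(1-q^{\mu^1_i-c+1}t^{\ell-i+(\mu^2)'_c}\bigr)$. I would split the cells of $\mu=(\mu^1,\mu^2)$ into three regions: rows $\leq\ell$ and columns $>m$; the rectangle $R$ (rows $\leq\ell$, columns $\leq m$); and rows $>\ell$. Using $\mu'_c=(\mu^1)'_c+(\mu^2)'_c$ together with the boxed inequalities — which make $(\mu^2)'_c=0$ when $c>m$ and $(\mu^1)'_c=\ell$ when $c\leq m$ — a routine check of arm and leg lengths shows that the $c'$-factors of $\mu$ coming from the first region reconstruct $c'_{\mu^1-R}$ exactly (the cell $(i,c)$ of $\mu$ matching the cell $(i,c-m)$ of $\mu^1-R$), those from the third region reconstruct $c'_{\mu^2}$ exactly (the cell $(i,c)$ matching $(i-\ell,c)$), and the remaining factors, one per cell $(i,c)$ of $R$, are $1-q^{\mu^1_i-c+1}t^{\ell-i+(\mu^2)'_c}$, in which every exponent is $\geq 0$. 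Hence
\[
k_{\lambda,\mu}(q,t)=k_{\lambda^1-R,\mu^1-R}(q,t)\,k_{\lambda^2,\mu^2}(q,t)\prod_{i=1}^{\ell}\prod_{c=1}^{m}\bigl(1-q^{\mu^1_i-c+1}t^{\ell-i+(\mu^2)'_c}\bigr).
\]

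To conclude I would substitute $q=t^k$ and divide by $(1-t)^{|\lambda|}$. Because $|\lambda|=|\lambda^1-R|+|\lambda^2|+\ell m$ and the double product contributes exactly $\ell m$ factors of $(1-t)$, the quantity $k_{\lambda,\mu}(t^k,t)/(1-t)^{|\lambda|}$ factors as $\dfrac{k_{\lambda^1-R,\mu^1-R}(t^k,t)}{(1-t)^{|\lambda^1-R|}}$ times $\dfrac{k_{\lambda^2,\mu^2}(t^k,t)}{(1-t)^{|\lambda^2|}}$ times $\prod_{i,c}\dfrac{1-t^{N_{i,c}}}{1-t}$ with $N_{i,c}=k(\mu^1_i-c+1)+\ell-i+(\mu^2)'_c\geq 0$. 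The first two factors lie in $\mathbb{Z}_{\geq 0}[t]$ by the hypothesized \ref{hagdualconj} for $(\lambda^1-R,\mu^1-R)$ and for $(\lambda^2,\mu^2)$; each $\tfrac{1-t^{N}}{1-t}$ with $N\geq 0$ is either $0$ or $1+t+\cdots+t^{N-1}$; and $\mathbb{Z}_{\geq 0}[t]$ is closed under multiplication, which gives \ref{hagdualconj} for $(\lambda,\mu)$.

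The only delicate point I anticipate is the cell-by-cell $c'$-bookkeeping of the second paragraph: one must be sure that the leg length of a cell in the $\mu^1$-block, computed inside the glued partition $\mu$, does not absorb rows coming from the $\mu^2$-block, and symmetrically that the leg length of a cell in the $\mu^2$-block is unaffected by the $\mu^1$-block — and this is exactly what the inequalities $\mu^2_1\leq m\leq\mu^1_\ell$ guarantee. Everything else is a formal assembly of \autoref{rowmult}, \autoref{colmult}, \autoref{k=K2c'}, and the definition of $c'_\mu$.
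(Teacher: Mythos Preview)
Your proof is correct and follows the same route as the paper: reduce $K^{(2)}_{\lambda,\mu}$ via \autoref{colmult} and \autoref{rowmult}, convert to $k$ via \autoref{k=K2c'}, and identify $c'_{\mu}/(c'_{\mu^1-R}c'_{\mu^2})$ as the product over $x\in R$ of $(1-q^{a_\mu(x)+1}t^{l_\mu(x)})$ --- your explicit exponents $\mu^1_i-c+1$ and $\ell-i+(\mu^2)'_c$ are exactly $a_\mu(x)+1$ and $l_\mu(x)$ for $x=(i,c)\in R$. The only difference is that you spell out the cell-by-cell $c'$-bookkeeping and the enabling inequalities $\mu^2_1\le m\le\mu^1_\ell$, which the paper compresses into the one-line remark that the arms and legs of boxes in $\mu^1-R$ do not interact with those of $\mu^2$.
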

		
		\begin{proof}
			
			
			By applying \autoref{colmult} to $\lambda^1 = R + (\lambda^1-R)$ and $\mu^1 = R + (\mu^1-R)$ we get for $i=1,2$,  \begin{eqnarray}
				K^{(i)}_{\lambda^1,\mu^1} = K^{(i)}_{R,R}K^{(i)}_{\lambda^1-R,\mu^1-R} = K^{(i)}_{\lambda^1-R,\mu^1-R} 
			\end{eqnarray}
			
			So, by \autoref{rowmult} \begin{eqnarray}\label{eqn:Kiprod}
				K^{(i)}_{\lambda,\mu} = K^{(i)}_{\lambda^1,\mu^1}K^{(i)}_{\lambda^2,\mu^2}= K^{(i)}_{\lambda^1-R,\mu^1-R}K^{(i)}_{\lambda^2,\mu^2}
			\end{eqnarray}
			
			Since the arms and legs of boxes in $\mu^1-R$ have no intersection with those of $\mu^2$, it follows that \begin{eqnarray}
				\dfrac{c'_{\mu}}{c'_{\mu^1-R}c'_{\mu^2}} = \prod_{x \in R}(1-q^{a_{\mu}(x)+1}t^{l_{\mu}(x)})
			\end{eqnarray}
			
			From \autoref{k=K2c'}, we get \begin{eqnarray}\label{kmultcor}
				k_{\lambda,\mu}(q,t) = k_{\lambda^1-R,\mu^1-R}(q,t)\; k_{\lambda^2,\mu^2}(q,t)\; \prod_{x \in R}(1-q^{a_{\mu}(x)+1}t^{l_{\mu}(x)})
			\end{eqnarray} 
			
			So, \begin{eqnarray}
				\dfrac{k_{\lambda,\mu}(t^k,t)}{(1-t)^{|\lambda|}} = \dfrac{k_{\lambda^1-R,\mu^1-R}(t^k,t)}{(1-t)^{|\lambda^1|-|R|}}\; \dfrac{k_{\lambda^2,\mu^2}(t^k,t)}{(1-t)^{|\lambda^2|}}\; \dfrac{\prod_{x \in R}(1-t^{k(a_{\mu}(x)+1)+l_{\mu}(x)})}{(1-t)^{|R|}}
			\end{eqnarray}  
			The last term on the right hand side is a product of $t$-numbers and hence in $\mathbb{Z}_{\geq 0}[t]$.
			
		\end{proof}
		
		
		As particular cases we get the next corollary \begin{corollary}\label{kplusrectangle}
			If \emph{\ref{hagdualconj}} is true for $\lambda,\mu$ then it is true for $(R,\lambda),(R,\mu)$ and for $(S+\lambda),(S+\mu)$ where $R,S$  are rectangular partitions, with row-length of $R$ atleast as big as $\lambda_1$ and column length of $S$ atleast as big as $l(\mu)$
		\end{corollary}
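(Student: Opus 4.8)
The plan is to handle the two claims separately; throughout we assume $\lambda\ge\mu$ (hence $|\lambda|=|\mu|$), since otherwise $k_{\lambda,\mu}=0$, and dominance of $(R,\lambda),(R,\mu)$ (resp. of $S+\lambda,S+\mu$) reduces to dominance of $\lambda,\mu$, so $k$ vanishes for the larger pairs as well and \ref{hagdualconj} is trivial throughout. For $((R,\lambda),(R,\mu))$ the claim is a direct instance of \autoref{cor:rectangleinsert}: take, in the notation there, $\lambda^1=\mu^1=R$, $\lambda^2=\lambda$, $\mu^2=\mu$. The hypotheses of \autoref{rowmult} hold ($R\ge R$, $\lambda\ge\mu$, $\ell(R)=\ell(R)$), the concatenations $(R,\lambda)=(\lambda^1,\lambda^2)$ and $(R,\mu)$ are legal precisely because the row length of $R$ is at least $\lambda_1\ge\mu_1$, and the rectangle produced in \autoref{cor:rectangleinsert} --- of row length $\lambda^1_{\ell(\lambda^1)}$ and column length $\ell(\lambda^1)$ --- is exactly $R$. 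Hence $\lambda^1-R=\mu^1-R=\emptyset$, and since $k_{\emptyset,\emptyset}=1$ makes \ref{hagdualconj} trivially true for the empty pair, \autoref{cor:rectangleinsert} gives \ref{hagdualconj} for $((R,\lambda),(R,\mu))$.

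For $((S+\lambda),(S+\mu))$ I would argue directly, mirroring the proof of \autoref{cor:rectangleinsert} but with \autoref{colmult} in place of \autoref{rowmult}. Since $\lambda\ge\mu$ forces $\ell(\lambda)\le\ell(\mu)\le\ell(S)$, the column sums $S+\lambda$ and $S+\mu$ are legal; applying \autoref{colmult} with $\lambda^1=\mu^1=S$, $\lambda^2=\lambda$, $\mu^2=\mu$ (valid: $S\ge S$, $\lambda\ge\mu$, $S_1=S_1$) and using $K^{(2)}_{S,S}=1$ gives $K^{(2)}_{S+\lambda,\,S+\mu}=K^{(2)}_{\lambda,\mu}$, whence by \autoref{k=K2c'}
\[
k_{S+\lambda,\,S+\mu}(q,t)=k_{\lambda,\mu}(q,t)\,\frac{c'_{S+\mu}(q,t)}{c'_{\mu}(q,t)} .
\]
The next step is to compute $c'_{S+\mu}/c'_{\mu}$: prepending the $S_1$ columns of $S$ to $\mu$ moves a box $(r,c)\in\mu$ to $(r,c+S_1)\in S+\mu$ while changing neither its arm ($\mu_r-c$) nor its leg ($\mu'_c-r$), because $(S+\mu)_r=S_1+\mu_r$ and the $(c+S_1)$-th column of $S+\mu$ still has height $\mu'_c$. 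So $c'_{S+\mu}/c'_{\mu}=\prod(1-q^{a+1}t^{\,l})$ over the $|S|=S_1\ell(S)$ new boxes $(r,c)$ with $1\le r\le\ell(S)$, $1\le c\le S_1$, where $a=S_1+\mu_r-c\ge0$ and $l=\ell(S)-r\ge0$.

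Finally, at $q=t^k$ each such factor becomes $1-t^{\,k(a+1)+l}$ with nonnegative exponent, so $\dfrac{1-t^{\,k(a+1)+l}}{1-t}=[\,k(a+1)+l\,]_t\in\mathbb{Z}_{\ge0}[t]$ (this is $0$ exactly on the bottom row of $S$ when $k=0$). Dividing the displayed identity by $(1-t)^{|S+\lambda|}=(1-t)^{|\lambda|}(1-t)^{|S|}$ and invoking the hypothesis $k_{\lambda,\mu}(t^k,t)/(1-t)^{|\lambda|}\in\mathbb{Z}_{\ge0}[t]$ shows $k_{S+\lambda,\,S+\mu}(t^k,t)/(1-t)^{|S+\lambda|}\in\mathbb{Z}_{\ge0}[t]$, i.e. \ref{hagdualconj} for $((S+\lambda),(S+\mu))$. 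I do not expect any genuine obstacle here; the only part requiring care is the arm-leg bookkeeping for $c'_{S+\mu}/c'_{\mu}$ --- verifying that the old boxes' statistics are untouched and that the new factors' exponents remain $\ge0$, so that the degenerate factors (of value $0$) still lie in $\mathbb{Z}_{\ge0}[t]$.
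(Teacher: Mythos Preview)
Your proof is correct and follows essentially the same approach as the paper, which records this corollary simply as ``particular cases'' of \autoref{cor:rectangleinsert}. For the first claim your application of \autoref{cor:rectangleinsert} with $\lambda^1=\mu^1=R$ is exactly what the paper intends. For the second claim you bypass \autoref{cor:rectangleinsert} and argue directly from \autoref{colmult}; this is the same ingredient used in the proof of \autoref{cor:rectangleinsert}, and your explicit computation of $c'_{S+\mu}/c'_\mu$ (checking that old boxes keep their arm and leg, and that new boxes contribute factors $(1-q^{a+1}t^l)$ with $a,l\ge 0$) is precisely the bookkeeping that underlies equation \eqref{kmultcor}. In fact your direct route via \autoref{colmult} is slightly cleaner here: the rectangle singled out in the statement of \autoref{cor:rectangleinsert} is the maximal one of column length $\ell(\lambda^1)$, which in the edge case $\ell(\lambda)=\ell(\mu)=\ell(S)$ is strictly larger than $S$ and would change the hypothesis pair; going straight to \autoref{colmult} with $\lambda^1=\mu^1=S$ avoids this wrinkle.
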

		
		\begin{remark}
			We deduce \cite[Theorem 3.1.1]{Yoo12} in the dual setup by \autoref{kmultcor} and using \autoref{k_n_mu}. In particular this proves \emph{\ref{hagdualconj}} holds for $\lambda = (a+k,b-k)$ and $\mu =(a,b)$. 
			Similarly, we obtain \cite[Proposition 3.4]{Yoo15} in the dual setup by applying \autoref{kmultcor} and using \autoref{klambda1nprop}.  
		\end{remark}

		\begin{example} We demonstrate the two principles of \autoref{rowmult} and \autoref{colmult} in this example (see \autoref{eqn:Kiprod})
			\begin{eqnarray*}
				K^{(2)}_{533,44111}(q,t) = K^{(2)}_{53,44}(q,t)K^{(2)}_{3,111}(q,t) =  K^{(2)}_{2,11}(q,t)K^{(2)}_{3,111}(q,t)
			\end{eqnarray*}
		\end{example}
		
		\section{Complementation}
		
		In this section we switch to type $GL_n$ Macdonald polynomials and then come back to symmetric functions. The reference for this section is \cite{Mac95} Chapter VI section 9. 
		
		Let $n \in \mathbb{Z}_{>0}$ be fixed and let $q$ and $t$ be two complex numbers, with $t=q^k$ for some fixed $k \in \mathbb{Z}_{\geq 0}$. 
		
		Let $\mathcal{P} = \mathbb{Z}^n$, and $\mathcal{P}^+ = \{ \alpha \in \mathcal{P} : \alpha_1 \geq \alpha_2 \geq ... \geq \alpha_n \}$. For $\alpha=(\alpha,\ldots,\alpha_n)\in \mathbb{Z}^n$ let $x^\alpha = (x_1^{\alpha_1},\ldots,x_n^{\alpha_n})$. Let $W=S_n$. By $\mathbb{C}[\mathcal{P}]$ we denote the ring of Laurent polynomials $\mathbb{C}[x_1^{\pm 1},x_2^{\pm 1},...,x_n^{\pm 1}]$. There is a ring homomorphism $\Lambda \to \mathbb{C}[\mathcal{P}]^W$ defined by sending $x_{n+i} \mapsto 0$ for $i\geq 1$. We denote the image of a symmetric function $f$ by $f(X_n)$. For any $\lambda \in \mathcal{P}^+$ (not necessarily with non-negative coordinates) we can define $P_{\lambda}(x;q,t) \in \mathbb{C}[\mathcal{P}]^W$ by the relations: \begin{eqnarray}\label{Plam+1^n}
			P_{\lambda}(x;q,t) & = & P_{\lambda}(X_n;q,t) \quad \text{ if } \ \lambda_n \geq 0 \\
			P_{\lambda+(1^n)}(x;q,t) & = & (x_1...x_n)P_{\lambda}(x;q,t) \quad \forall \ \lambda \in \mathcal{P}^+
		\end{eqnarray} 
		We also define for all $\lambda \in \mathcal{P}^+$, $m_{\lambda}(x) = P_{\lambda}(x;q,1)$ and $s_{\lambda}(x) = P_{\lambda}(x;q,q)$.
		
		
		Also, $\mathcal{P}^+$ has the dominance partial order $\geq$ defined by \begin{eqnarray}
			\lambda \geq \mu \ \text{if and only if} \ \lambda-\mu \in \mathbb{Z}_{\geq 0}\{e_i-e_j:1 \leq i<j \leq n\}
		\end{eqnarray}
		where $e_i$ denotes the standard basis vector $(0,\ldots,1,\ldots,0)$ with $1$ in $i$-th position and $0$ everywhere else.
		
		Let \ $\bar{}: \mathbb{C}[\mathcal{P}] \to \mathbb{C}[\mathcal{P}]$ be the $\mathbb{C}$-algebra involution defined by $x_i \mapsto x_i^{-1}$ for $i\in \{ 1,...,n \}$. Then there is a scalar product on $\mathbb{C}[\mathcal{P}]$ defined by \begin{eqnarray}
			\langle f , g \rangle' = \dfrac{1}{n!} \text{ct}(f\overline{g}\Delta)
		\end{eqnarray}
		where $\text{ct}$ denotes the constant term map on the Laurent polynomial ring, i.e, ct($f$)=coefficient of $x^0$ in $f$ and \begin{eqnarray}
			\Delta = \prod_{i \neq j}\prod_{r=0}^{k-1}(1-q^r\dfrac{x_i}{x_j})
		\end{eqnarray}
		
		The next theorem characterizes the $P_{\lambda}(x;q,t) : \lambda \in \mathcal{P}^+$.
		
		\begin{theorem}
			The $P_{\lambda}(x;q,t): \lambda \in \mathcal{P}^+$ is the unique family of elements in $\mathbb{C}[\mathcal{P}]^W$ satisfying \begin{eqnarray}
				P_{\lambda}(x;q,t) = m_{\lambda}(x;q,t) + \sum_{\mu < \lambda} a_{\lambda,\mu}(q,t) m_{\mu}(x) \\  \ \langle P_{\lambda}(x;q,t) , P_{\mu}(x;q,t) \rangle' = 0 \ \text{for } \ \lambda \neq \mu 
			\end{eqnarray} 
			for some $a_{\lambda,\mu}(q,t) \in \mathbb{C}(q,t) \subset \mathbb{C}$
		\end{theorem}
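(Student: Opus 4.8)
The plan is to view this as Macdonald's characterization of the $GL_n$-type Macdonald polynomials \cite{Mac95}, the only new wrinkle being that $\lambda$ ranges over all of $\mathcal{P}^+=\mathbb{Z}^n$ rather than only over dominant weights with non-negative entries. I would first record two elementary facts about the dominance order on $\mathcal{P}^+$: every interval $[\mu,\lambda]=\{\nu\in\mathcal{P}^+:\mu\leq\nu\leq\lambda\}$ is finite (the partial sums $\nu_1+\dots+\nu_j$ are pinned between those of $\mu$ and $\lambda$, with $|\nu|=|\lambda|$ fixed), so that $\{m_\lambda(x)\}_{\lambda\in\mathcal{P}^+}$ is a $\mathbb{C}$-basis of $\mathbb{C}[\mathcal{P}]^W$ and all the triangular expansions below are finite; and the order is invariant under $\lambda\mapsto\lambda+(1^n)$, i.e. $\nu\leq\lambda\iff\nu+(1^n)\leq\lambda+(1^n)$. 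Both are immediate from the definitions.

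\textbf{Existence.} I would check that the $P_\lambda(x;q,t)$ built from \eqref{Plam+1^n} satisfy the two stated properties. When $\lambda_n\geq 0$, $P_\lambda(x;q,t)=P_\lambda(X_n;q,t)$ is the $n$-variable specialization of the Macdonald symmetric function, and its monomial-triangularity $P_\lambda-m_\lambda\in\sum_{\mu<\lambda}\mathbb{C}(q,t)m_\mu$ in $\Lambda$ restricts to the desired relation in $\mathbb{C}[\mathcal{P}]^W$, since $m_\mu(X_n)$ is either $0$ or $m_\mu(x)$ and $\mu<\lambda$ in the dominance order on partitions gives $\mu\leq\lambda$ in $\mathcal{P}^+$; the coefficients lie in $\mathbb{C}(q,t)$. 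For general $\lambda$, choose $r$ with $\lambda_n+r\geq 0$ and write $P_\lambda(x;q,t)=(x_1\cdots x_n)^{-r}P_{\lambda+(r^n)}(x;q,t)$; since $m_\mu(x)(x_1\cdots x_n)^{-r}=m_{\mu-(r^n)}(x)$ and dominance is translation-invariant, the triangular relation transfers from $\lambda+(r^n)$ to $\lambda$. For orthogonality, Macdonald's theorem gives $\langle P_\lambda(x;q,t),P_\mu(x;q,t)\rangle'=0$ for distinct $\lambda,\mu$ with non-negative entries; for arbitrary distinct $\lambda,\mu$ multiply both by a common $(x_1\cdots x_n)^r$ to reduce to that case, noting that $\overline{x_1\cdots x_n}=(x_1\cdots x_n)^{-1}$ forces $\langle (x_1\cdots x_n)^rf,(x_1\cdots x_n)^rg\rangle'=\langle f,g\rangle'$.

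\textbf{Uniqueness.} Given a second family $(P'_\lambda)$ satisfying both conditions, both families are $\mathbb{C}$-bases of $\mathbb{C}[\mathcal{P}]^W$ unitriangular with respect to $\{m_\lambda(x)\}$ (the change of basis is unitriangular and dominance intervals are finite), so I may write $P_\lambda(x;q,t)=\sum_{\mu\leq\lambda}c_{\lambda\mu}P'_\mu(x;q,t)$ with $c_{\lambda\lambda}=1$, a finite sum. Fix $\nu<\lambda$. Expanding $P'_\nu$ in the unprimed family (all indices $\leq\nu<\lambda$) and using orthogonality of the $P_\rho$ gives $\langle P_\lambda(x;q,t),P'_\nu(x;q,t)\rangle'=0$; on the other hand the displayed expansion of $P_\lambda$ together with orthogonality of the primed family gives $\langle P_\lambda(x;q,t),P'_\nu(x;q,t)\rangle'=c_{\lambda\nu}\langle P'_\nu(x;q,t),P'_\nu(x;q,t)\rangle'$. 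Hence $c_{\lambda\nu}\langle P'_\nu,P'_\nu\rangle'=0$, and as the self-pairing is nonzero at the parameters in play (Macdonald's norm formula is a product of factors that do not vanish for $q$ not a root of unity and $t=q^k$), $c_{\lambda\nu}=0$ for all $\nu<\lambda$, so $P_\lambda=P'_\lambda$.

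The step I expect to be the main obstacle — although it is one where I simply invoke a known result rather than do real work — is the non-vanishing of the norms $\langle P'_\nu(x;q,t),P'_\nu(x;q,t)\rangle'$ at the specific parameters, which is exactly what lets the triangular Gram--Schmidt argument close; everything else is bookkeeping around the translation $\lambda\mapsto\lambda+(1^n)$ and the finiteness of dominance intervals.
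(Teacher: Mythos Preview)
Your approach mirrors the paper's: reduce both triangularity and orthogonality to the polynomial case via the translation $\lambda\mapsto\lambda+(r^n)$, cite Macdonald for that case, and then argue uniqueness from the fact that a unitriangular change of basis between two orthogonal families must be the identity. The paper is terser and handles $|\lambda|\neq|\mu|$ separately by a degree count before translating, whereas you handle everything uniformly via translation-invariance of $\langle\,,\,\rangle'$; your version is cleaner on that point, and your explicit remarks on finiteness of dominance intervals are a welcome addition.

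There is one small slip in your uniqueness argument. You conclude $c_{\lambda\nu}\langle P'_\nu,P'_\nu\rangle'=0$ and then appeal to Macdonald's norm formula to assert the self-pairing is nonzero. But Macdonald's formula is for the $P_\nu$; for the a priori unknown family $(P'_\nu)$ you have no such formula, so invoking it here is circular. The fix is immediate: run the same two-way computation with the roles of $P$ and $P'$ swapped (expand $P'_\lambda=\sum_{\mu\leq\lambda}c'_{\lambda\mu}P_\mu$), so that the self-pairing you need is $\langle P_\nu,P_\nu\rangle'$, which is nonzero by Macdonald. Alternatively, argue by induction up the dominance order within each fixed degree (a finite poset), so that $P'_\nu=P_\nu$ is already established for $\nu<\lambda$ by the time you need its norm. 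The paper's one-line ``triangular orthogonal matrix is diagonal'' hides the same nondegeneracy input, so this is really just a matter of saying clearly which family's norms you are using.
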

		
		\begin{proof}
			If $\lambda\in Par$ with $l(\lambda) \leq n$ then $P_{\lambda}(x;q,t)$ is a homogeneous polynomial of degree $|\lambda|$. By \autoref{Plam+1^n}, $P_{\lambda}(X_n;q,t)$ is a homogeneous Laurent polynomial with degree $\lambda_1+\ldots+\lambda_n$ for all $\lambda \in \mathcal{P}^+$ . By comparing degrees it is clear that if $\lambda_1+\ldots+\lambda_n \neq \mu_1+\ldots+\mu_n$ then $\langle P_{\lambda}(x;q,t) , P_{\mu}(x;q,t) \rangle' = 0$. By \autoref{Plam+1^n} and the definition of $\langle \ , \ \rangle'$ it is enough to check orthogonality of the set $\{P_{\lambda}(X_n;q,t) : \lambda \in \mathcal{P}^+ \text{ \ such that} \ \lambda_n \geq 0 \}$. This is shown in \cite{Mac95}[Chapter VI, (9.5)]. Since $\{m_{\lambda}(x): \lambda \in \mathcal{P}^+\}$ is a basis of $\mathbb{C}[\mathcal{P}]^W$ so is $\{ P_{\lambda}(x;q,t): \lambda\in \mathcal{P}^+ \}$.
			
			Any two bases of $\mathbb{C}[\mathcal{P}]^W$ satisfying the above conditions will be related by a triangular orthogonal matrix and any such matrix is necessarily diagonal, so the uniqueness follows.
			
		\end{proof}
		
		Let $w_0 \in S_n$ be the permutation that sends $i \mapsto (n+1-i)$ for $i\in \{ 1,...,n \}$. Let $\phi: \mathbb{C}[\mathcal{P}] \to \mathbb{C}[\mathcal{P}]$ be the $\mathbb{C}$-algebra homomorphism defined by $\phi(x^{\alpha}) = x^{-w_0\alpha}$. So $\phi(f)(x) = w_0 \overline{f}(x)$
		\begin{proposition}
			For $\lambda \in \mathcal{P}^+$, $\phi(P_{\lambda}(x;q,t)) = P_{-w_0\lambda}(x;q,t)$. In particular, $\phi(s_{\lambda}(x)) = s_{-w_0\lambda}(x)$ 
		\end{proposition}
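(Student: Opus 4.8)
The plan is to invoke the uniqueness in the characterization theorem just stated. It suffices to check that, for each $\nu\in\mathcal{P}^+$, the Laurent polynomial $\phi\bigl(P_{-w_0\nu}(x;q,t)\bigr)$ lies in $\mathbb{C}[\mathcal{P}]^W$, has a monomial expansion $m_\nu(x)+\sum_{\mu<\nu}c_{\nu,\mu}(q,t)\,m_\mu(x)$ with $c_{\nu,\mu}(q,t)\in\mathbb{C}(q,t)$, and is orthogonal under $\langle\ ,\ \rangle'$ to $\phi\bigl(P_{-w_0\nu'}(x;q,t)\bigr)$ whenever $\nu\ne\nu'$. The characterization theorem then forces $\phi\bigl(P_{-w_0\nu}(x;q,t)\bigr)=P_\nu(x;q,t)$, and since $\lambda\mapsto-w_0\lambda$ is an involution of $\mathcal{P}^+$ this is exactly $\phi\bigl(P_\lambda(x;q,t)\bigr)=P_{-w_0\lambda}(x;q,t)$.

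First I would record the elementary facts. Since $w_0e_i=e_{n+1-i}$, for $\lambda\in\mathcal{P}^+$ we have $-w_0\lambda=(-\lambda_n,\dots,-\lambda_1)\in\mathcal{P}^+$, the map $\lambda\mapsto-w_0\lambda$ squares to the identity, and it is order-preserving on $(\mathcal{P}^+,\le)$ because it sends $e_i-e_j$ with $i<j$ to $e_{n+1-j}-e_{n+1-i}$, again of the form $e_a-e_b$ with $a<b$. Because $\phi(f)(x)=w_0\overline{f}(x)$ and the bar involution commutes with the $W$-action, $\phi$ carries $\mathbb{C}[\mathcal{P}]^W$ into itself; and from $\phi(x^\beta)=x^{-w_0\beta}$ together with $-w_0(W\lambda)=W(-w_0\lambda)$ (using $w_0w=(w_0ww_0)\,w_0$) one gets $\phi(m_\lambda)=m_{-w_0\lambda}$. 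Applying $\phi$ to $P_\lambda(x;q,t)=m_\lambda(x)+\sum_{\mu<\lambda}a_{\lambda,\mu}(q,t)m_\mu(x)$ and reindexing by $\nu=-w_0\mu$ then gives $\phi\bigl(P_\lambda(x;q,t)\bigr)=m_{-w_0\lambda}(x)+\sum_{\nu<-w_0\lambda}a_{\lambda,-w_0\nu}(q,t)\,m_\nu(x)$, which is in $\mathbb{C}[\mathcal{P}]^W$ and has the required triangular shape.

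The only real computation is that $\phi$ preserves $\langle\ ,\ \rangle'$. Here I would use that $\Delta$ is $W$-invariant (the product runs over all ordered pairs $i\ne j$) and fixed by the bar involution ($\overline{\Delta}=\Delta$ on swapping $i\leftrightarrow j$), and that $\text{ct}$ is invariant both under the $W$-action and under the bar involution. For $f,g\in\mathbb{C}[\mathcal{P}]$ one has $\overline{\phi(g)}=w_0g$, hence $\phi(f)\,\overline{\phi(g)}\,\Delta=w_0(\overline{f}\,g)\cdot\Delta=w_0(\overline{f}\,g\,\Delta)$; applying $\text{ct}$, then its $W$-invariance, then $\text{ct}(h)=\text{ct}(\overline{h})$ together with $\overline{\overline{f}\,g\,\Delta}=f\,\overline{g}\,\Delta$, yields $\text{ct}\bigl(\phi(f)\,\overline{\phi(g)}\,\Delta\bigr)=\text{ct}\bigl(f\,\overline{g}\,\Delta\bigr)$, i.e. $\langle\phi(f),\phi(g)\rangle'=\langle f,g\rangle'$. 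In particular $\langle\phi(P_\lambda),\phi(P_\mu)\rangle'=\langle P_\lambda,P_\mu\rangle'=0$ for $\lambda\ne\mu$, which completes the verification of the hypotheses of the characterization theorem and hence proves $\phi\bigl(P_\lambda(x;q,t)\bigr)=P_{-w_0\lambda}(x;q,t)$. For the final assertion, take $k=1$ (so $t=q$): since $s_\lambda(x)=P_\lambda(x;q,q)$ this gives $\phi(s_\lambda(x))=s_{-w_0\lambda}(x)$. The one delicate point is keeping straight the interaction of $w_0$, the bar involution and $\text{ct}$ in this isometry step; everything else is routine bookkeeping with the dominance order on $\mathcal{P}^+$.
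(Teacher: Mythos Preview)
Your proof is correct and follows essentially the same route as the paper: verify unitriangularity of $\phi(P_\lambda)$ via $\phi(m_\mu)=m_{-w_0\mu}$ and the order-preservation of $\lambda\mapsto -w_0\lambda$, then check orthogonality using the $S_n$-invariance of $\Delta$ and of $\text{ct}$, and conclude by the uniqueness in the characterization theorem. Your write-up is in fact more explicit than the paper's, in particular in establishing the full isometry $\langle\phi(f),\phi(g)\rangle'=\langle f,g\rangle'$ (where the paper only sketches the orthogonality).
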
  
		
		\begin{proof}
			Since $\phi(m_{\mu}(x)) = m_{-w_0\mu}(x) \quad \forall \mu \in \mathcal{P}^+ $, we have
			\begin{eqnarray}
				\phi(P_{\lambda}(x;q,t)) = m_{-w_0\lambda}(x)+ \sum_{\mu < \lambda}a_{\lambda,\mu}(q,t) m_{-w_0\mu}(x)
			\end{eqnarray}
			Note that $\mu \leq \lambda \iff -w_0\mu \leq -w_0\lambda$, so $\phi(P_\lambda)$ satisfies the unitriangularity property. To prove $\phi(P_{\lambda}(x;q,t)) =  P_{-w_0\lambda}(x;q,t) $, we only need to prove for $\lambda \neq \mu$, $\langle \phi(P_{\lambda}(x;q,t)), \phi(P_{\mu}(x;q,t))\rangle'  = 0$. This follows from the fact that the constant term of a Laurent polynomial is $S_n$ invariant, $w_0$ is a homomorphism and that $\Delta$ is $S_n$-invariant. 

		\end{proof}

		\begin{corollary}\label{complement}
			Let $\lambda, \mu \subset (m^n)$ be partitions. Let $\lambda^c = (m-\lambda_{n},m-\lambda_{n-1},...,m-\lambda_1)$ and $\mu^c = (m-\mu_{n},m-\mu_{n-1},...,m-\mu_1)$. Then for $i=1,2$
			\begin{eqnarray}
				(K^{(i)})^{\pm 1}_{\lambda,\mu} = (K^{(i)})^{\pm 1}_{\lambda^c,\mu^c} 
			\end{eqnarray}  
		\end{corollary}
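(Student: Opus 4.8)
The plan is to transport the change-of-basis identities that define $K^{(1)}$ and $K^{(2)}$ through the ``complementation'' automorphism of $\mathbb{C}[\mathcal{P}]^W$ built from $\phi$ and multiplication by $(x_1\cdots x_n)^m$, and then invoke \autoref{posetlemma} to pass to the inverse matrices.

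First I would fix $m,n$ with $\lambda,\mu\subseteq(m^n)$ and record two elementary facts. Writing $\nu^c=-w_0\nu+(m^n)$, the assignment $\nu\mapsto\nu^c$ is an involution on the set $R_{m,n}$ of partitions contained in $(m^n)$; since $-w_0$ preserves the dominance cone on $\mathcal{P}^+$ (exactly the observation $\mu\le\lambda\iff -w_0\mu\le -w_0\lambda$ used in the preceding proposition) and adding $(m^n)$ is order-preserving, the involution $\nu\mapsto\nu^c$ is order-preserving on $R_{m,n}$. Moreover any $\gamma$ with $\lambda\ge\gamma\ge\mu$ lies in $R_{m,n}$: from dominance $\gamma_1\le\lambda_1\le m$, and if $\ell(\gamma)>\ell(\mu)=:l$ then $\gamma_1+\dots+\gamma_l\ge\mu_1+\dots+\mu_l=|\mu|=|\gamma|$ forces $\gamma_{l+1}=\dots=0$, a contradiction, so $\ell(\gamma)\le\ell(\mu)\le n$. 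Hence $\nu\mapsto\nu^c$ restricts to a poset isomorphism $[\lambda,\mu]\xrightarrow{\ \sim\ }[\lambda^c,\mu^c]$.

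Next, working with $t=q^k$, I would consider the linear automorphism $\Theta$ of $\mathbb{C}[\mathcal{P}]^W$ given by $\Theta(g)=(x_1\cdots x_n)^m\,\phi(g)$. Iterating $P_{\nu+(1^n)}=(x_1\cdots x_n)P_\nu$ together with $\phi(P_\nu(x;q,t))=P_{-w_0\nu}(x;q,t)$ from the preceding proposition gives $\Theta(P_\nu(X_n;q,t))=P_{\nu^c}(X_n;q,t)$ for every $\nu\in R_{m,n}$, and likewise $\Theta(m_\nu(X_n))=m_{\nu^c}(X_n)$ and $\Theta(s_\nu(X_n))=s_{\nu^c}(X_n)$. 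Now restrict the defining expansion $P_\lambda=\sum_\nu K^{(1)}_{\lambda,\nu}(q,t)\,m_\nu$ to $n$ variables: any $\nu$ with $K^{(1)}_{\lambda,\nu}\neq 0$ satisfies $\nu\le\lambda$, so $\nu_1\le m$, while terms with $\ell(\nu)>n$ have $m_\nu(X_n)=0$; thus the sum effectively runs over $\nu\in R_{m,n}$. Applying $\Theta$, reindexing $\rho=\nu^c$, and comparing with the analogous expansion of $P_{\lambda^c}(X_n;q,t)$ in the (linearly independent) $m_\rho(X_n)$, $\rho\in R_{m,n}$, yields $K^{(1)}_{\lambda^c,\rho}(q,q^k)=K^{(1)}_{\lambda,\rho^c}(q,q^k)$, hence (taking $\rho=\mu^c$) $K^{(1)}_{\lambda,\mu}=K^{(1)}_{\lambda^c,\mu^c}$ at every $t=q^k$; as both sides are rational in $q,t$ and agree on the Zariski-dense set $\{t=q^k:k\ge 0\}$, they coincide as rational functions. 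The identical argument with $s_\lambda$ in place of $P_\lambda$ and $P_\mu$ in place of $m_\mu$ gives $K^{(2)}_{\lambda,\mu}=K^{(2)}_{\lambda^c,\mu^c}$.

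Finally, for the inverse matrices I would use \autoref{posetlemma}: since $K^{(i)}$ is upper-unitriangular in the dominance order, $(K^{(i)})^{-1}_{\lambda,\mu}$ is the $(\lambda,\mu)$-entry of $(K^{(i)}|_{[\lambda,\mu]})^{-1}$. By the first paragraph $\nu\mapsto\nu^c$ identifies $[\lambda,\mu]$ with $[\lambda^c,\mu^c]$ as posets, and by the previous paragraph it carries $K^{(i)}|_{[\lambda,\mu]}$ entrywise onto $K^{(i)}|_{[\lambda^c,\mu^c]}$; hence it carries their inverses onto one another, so $(K^{(i)})^{-1}_{\lambda,\mu}=(K^{(i)})^{-1}_{\lambda^c,\mu^c}$. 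I expect the only delicate points to be the careful bookkeeping when truncating to $n$ variables (ensuring no coefficients are lost in passing to $\mathbb{C}[\mathcal{P}]^W$) and checking that $\nu\mapsto\nu^c$ really is a poset isomorphism on the relevant intervals; the rest is a direct transport of structure under $\Theta$.
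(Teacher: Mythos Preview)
Your proof is correct and follows essentially the same route as the paper: both transport the defining expansions through the automorphism $g\mapsto (x_1\cdots x_n)^m\,\phi(g)$ of $\mathbb{C}[\mathcal{P}]^W$ and compare coefficients after specializing to $n$ variables. The only cosmetic differences are that the paper handles the inverse matrices by applying the same argument directly to the inverse expansions $m_\lambda=\sum_\nu(K^{(1)})^{-1}_{\lambda,\nu}P_\nu$ and $P_\lambda=\sum_\nu(K^{(2)})^{-1}_{\lambda,\nu}s_\nu$ (rather than via \autoref{posetlemma} and the poset isomorphism $\nu\mapsto\nu^c$), and that you make explicit the Zariski-density passage from $t=q^k$ to general $(q,t)$ which the paper leaves implicit.
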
 
		
		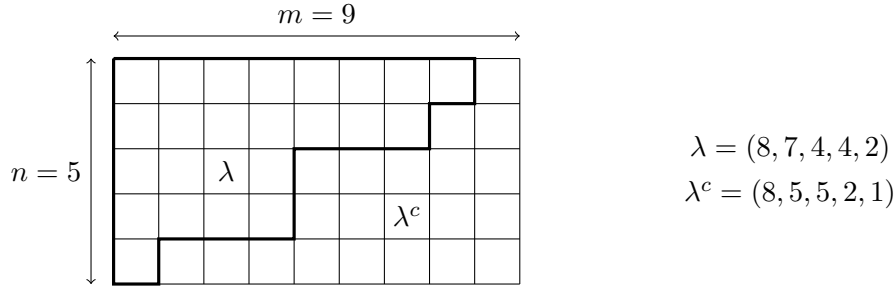
\begin{figure}[h]\label{fig:lamc}
			\begin{tikzpicture}[yscale=-0.6, xscale=0.6]
				\draw[very thick] (0,0)--(8,0)--(8,1)--(7,1)--(7,2)--(4,2)--(4,4)--(1,4)--(1,5)--(0,5)--(0,0);
				
				\draw (0,0)--(9,0);
				\draw (0,1)--(9,1);
				\draw (0,2)--(9,2);
				\draw (0,3)--(9,3);
				\draw (0,4)--(9,4);
				\draw (0,5)--(9,5);
				
				\draw (0,0)--(0,5);
				\draw (1,0)--(1,5);
				\draw (2,0)--(2,5);
				\draw (3,0)--(3,5);
				\draw (4,0)--(4,5);
				\draw (5,0)--(5,5);
				\draw (6,0)--(6,5);
				\draw (7,0)--(7,5);
				\draw (8,0)--(8,5);
				\draw (9,0)--(9,5);
				
				\draw (2.5,2.5) node{$\lambda$};
				\draw (6.5,3.5) node{$\lambda^c$};
				
				\draw[<->] (0,-.5)--(9,-.5);
				\draw (4.5,-1) node{$m = 9$};
				
				\draw[<->] (-.5,0)--(-.5,5);
				\draw (-1.5,2.5) node{$n = 5$};

				\draw (15,2) node{$\lambda = (8,7,4,4,2)$};
				\draw (15,3) node{$\lambda^c = (8,5,5,2,1)$};
			\end{tikzpicture}	
			
			\caption{Example of $\lambda,\lambda^c$ when $m=9,n=5$}
			
		\end{figure}
		
		\begin{proof}
			Let $w_0 \in S_n$ be as before. First note that if $\lambda \subset (m^n)$ is a partition then $\lambda^c = (m-\lambda_{n},...,m-\lambda_1)$ is also a partition. So it makes sense to talk about $K^{(i)}_{\lambda^c,\mu^c}(q,t)$ in this situation. 
			
			We have in the ring $\Lambda$: \begin{eqnarray}
				P_{\lambda}(q,t) = \sum_{\nu \leq \lambda} K^{(1)}_{\lambda,\nu}(q,t) \; m_\nu 
			\end{eqnarray}
			Specializing to $n$ variables we have \begin{eqnarray}
				P_{\lambda}(X_n;q,t) = \sum_{\nu \leq \lambda} K^{(1)}_{\lambda,\nu}(q,t) \; m_{\nu}(X_n)
			\end{eqnarray}
			On the right hand side, if $l(\nu)>n$ then the corresponding term vanishes. Now applying the map $\phi$ to this equation we get \begin{eqnarray}
				P_{-w_0\lambda}(X_n;q,t) = \sum_{\nu \leq \lambda} K^{(1)}_{\lambda,\nu}(q,t) m_{-w_0\nu}(X_n) 
			\end{eqnarray} 
			Multiplying by $(x_1...x_n)^m$ and using \autoref{Plam+1^n} we get \begin{eqnarray}\label{eqnPcomplementX_ntomcomplementX_n}
				P_{(m^n)-w_0\lambda}(X_n;q,t) = \sum_{\nu \leq \lambda} K^{(1)}_{\lambda,\nu}(q,t) m_{(m^n)-w_0\nu}(X_n) 
			\end{eqnarray}
			
			Again, consider the equation in $\Lambda$ \begin{eqnarray}
				P_{\lambda^c}(q,t) = \sum_{\gamma \leq \lambda^c} K^{(1)}_{\lambda^c,\gamma}(q,t)m_{\gamma}
			\end{eqnarray} and specialize to $n$ variables.
			We get that the coefficient of $m_{\mu^c}(X_n)$ is $K^{(1)}_{\lambda^c,\mu^c}(q,t)$. Here we have used the fact that $m_{\mu^c}(X_n) \neq 0$ since $\mu \subset (m^n)$. Comparing with \autoref{eqnPcomplementX_ntomcomplementX_n} we get the desired statement for $K^{(1)}$. By considering \begin{eqnarray}
				m_{\lambda} = \sum_{\nu \leq \lambda}(K^{(1)})^{-1}_{\lambda,\nu}(q,t)P_{\nu}(q,t)
			\end{eqnarray} we get the inverse statement for $K^{(1)}$. The proof for $(K^{(2)})^{\pm 1}$ is obtained from by replacing $m_{\mu}$s in the above argument by $s_{\mu}$s everywhere.   
		\end{proof}


		\section{Multiplicity-one pairs}\label{sec:whenKlammu=1}
		In this section we finish the proof that \ref{hagdualconj} holds for all pairs $(\lambda,\mu)$ such that either $K_{\lambda,\mu}=1$ or $K_{\mu',\lambda'}=1$. We recall the reduction principle given in the introduction. 
		
		Let $\lambda,\mu \in Par$ such that $\lambda \geq \mu$. We can always decompose $\lambda = (\lambda^1,\ldots,\lambda^r)$ and $\mu = (\mu^1,\ldots,\mu^r)$ where $\lambda^1,\ldots \lambda^r,\mu^1,\ldots,\mu^r$ are partitions such that for $i\in \{1,\ldots r\}$, $\lambda^i \geq \mu^i$ and for $i \in \{1,\ldots r-1\}$, $l(\lambda^i)=l(\mu^i)$. Then for $i=1,2$, by \autoref{rowmult}, \begin{eqnarray}
			(K^{(i)})^{\pm 1}_{\lambda,\mu}(q,t) = \prod_{j=1}^{r}(K^{(i)})^{\pm 1}_{\lambda^j,\mu^j}(q,t)
		\end{eqnarray}
		
		For $j\in \{1,\ldots, r-1\}$ each pair $(\lambda^j,\mu^j)$ contains a common rectangle of row length atleast $\lambda^{j+1}_1$. By \autoref{cor:rectangleinsert} proving the dual Haglund conjecture is equivalent to proving it for each pair $(\lambda^j,\mu^j)$ with these rectangles removed. We can repeat this process to each pair till no more decomposition is possible. 	
		
		Therefore, to prove the dual Haglund's conjecture we only need to prove \ref{hagdualconj} for pairs of partitions $(\lambda,\mu)$ which are \textsl{irreducible} i.e, satisfying $\lambda \geq \mu$ and  $\lambda_1+\ldots+\lambda_i > \mu_1+\ldots+\mu_i$ for $1\leq i \leq \ell(\lambda)$.
		
		\subsubsection{When $K_{\lambda,\mu}=1$}

		Recall the characterization of irreducible pairs $(\lambda,\mu)$ with $K_{\lambda,\mu}=1$ given in \autoref{BZtheorem}. Case 2 of \autoref{BZtheorem} is already done by \autoref{hagforroworcol}. Now consider Case 1 of \autoref{BZtheorem}. 
		Let $\lambda = (m^n)\geq \mu$ and $l(\mu) = n+1$.  Since both $\lambda,\mu \subset (m^{n+1})$, we have by \autoref{complement} that \begin{eqnarray}
			(K^{(i)})^{\pm 1}_{\lambda,\mu}(q,t) = (K^{(i)})^{\pm 1}_{(m),\mu^c}(q,t) \quad \text{for } i = 1,2
		\end{eqnarray} Here $\mu^c = (m-\mu_{n+1},...m-\mu_1)$ is the complement of $\mu$ in the rectangle $(m^{n+1})$ and $(m)$ is the complement of $\lambda$.
		
		In particular, $K^{(2)}_{\lambda,\mu} = K^{(2)}_{(m),\mu^c}$ implies \begin{eqnarray}
			k_{\lambda,\mu}(q,t) = k_{(m),\mu^c}(q,t) \dfrac{c'_{\mu}(q,t)}{c'_{\mu^c}(q,t)}
		\end{eqnarray}

		Note that $c'_{\mu}$ is a product of $|\mu|=mn$ many terms, $c'_{\mu^c}$ is a product of $m(n+1)- |\mu|=m$ many terms. If all terms in the denominator cancels off with terms in $c'_{\mu}$ then $\dfrac{c'_{\mu}(q,t)}{c'_{\mu^c}(q,t)}$ will be a product of $mn-m$ terms each of the form $(1-q^\alpha t^\beta)$ for $\alpha,\beta \in \mathbb{Z}_{\geq 0}$, substituting $q=t^k$ and dividing by $(1-t)$ we get $\dfrac{1-t^{k\alpha+\beta}}{1-t} \in \mathbb{Z}_{\geq 0}[t]$. Since we know $\dfrac{k_{(m),\mu^c}(t^k,t) }{(1-t)^m} \in \mathbb{Z}_{\geq 0}$ by \autoref{hagforroworcol}, this would imply that $\dfrac{k_{\lambda,\mu}(t^k,t)}{(1-t)^{mn}} \in \mathbb{Z}_{\geq 0}[t]$. To prove that $\dfrac{c'_{\mu}(q,t)}{c'_{\mu^c}(q,t)}$ is a product of $mn-m$ terms, each of the form $(1-q^\alpha t^\beta)$ for $\alpha,\beta \in \mathbb{Z}_{\geq 0}$ we consider $f_\mu(q,t) = \sum_{x \in \mu}q^{a(x)}t^{l(x)}$ and show that $f_{\mu}(q,t)-f_{\mu^c}(q,t)\in \mathbb{Z}_{\geq 0}[q,t]$. Since each monomial $q^{\alpha}t^{\beta}$ (counted with multiplicity) that occurs in $f_{\mu}(q,t)-f_{\mu^c}(q,t)$, corresponds to a term $(1-q^{\alpha+1}t^{\beta})$ in $\dfrac{c'_{\mu}}{c'_{\mu^c}}$, this is clearly sufficient.

		\begin{lemma}
			Let $\mu \in Par$. Then \begin{eqnarray}
				f_{\mu}(q,t) = \sum_{j=0}^{l(\mu)-1} t^j \bigg( \sum_{i=1}^{l(\mu)-j} q^{\mu_i-\mu_{i+j}}[\mu_{i+j}-\mu_{i+j+1}]_q \bigg)  
			\end{eqnarray}
		\end{lemma}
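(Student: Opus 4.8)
The plan is to prove the identity by a direct reorganization of the sum $f_\mu(q,t)=\sum_{x\in\mu}q^{a(x)}t^{l(x)}$, grouping the boxes of $\mu$ first by their leg length and then, within a fixed leg-length class, by the row in which they lie. Recall that for $x=(r,c)\in\mu$ we have $a(x)=\mu_r-c$ and $l(x)=\mu'_c-r$, and throughout I use the convention $\mu_k=0$ for $k>l(\mu)$, so that $[\mu_{i+j}-\mu_{i+j+1}]_q$ makes sense even when $i+j=l(\mu)$.

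The key observation is that a box $x=(r,c)\in\mu$ satisfies $l(x)=j$ if and only if $\mu'_c=r+j$, and the condition $\mu'_c=r+j$ is in turn equivalent to $\mu_{r+j+1}<c\le\mu_{r+j}$. Hence, with $r=i$ fixed, the boxes in row $i$ having leg length exactly $j$ are precisely those $(i,c)$ with $c\in\{\mu_{i+j+1}+1,\dots,\mu_{i+j}\}$. This range is non-empty only when $i+j\le l(\mu)$, i.e. $1\le i\le l(\mu)-j$; and $j$ itself ranges over $0,\dots,l(\mu)-1$, since the largest leg length occurring in $\mu$ is that of the box $(1,1)$, namely $\mu'_1-1=l(\mu)-1$. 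This shows the index ranges on the right-hand side are exactly the ones that enumerate all boxes of $\mu$ without repetition.

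It then remains to evaluate, for fixed $i$ and $j$ with $i+j\le l(\mu)$, the contribution of the row-$i$ boxes of leg length $j$, which is
\[
t^{j}\sum_{c=\mu_{i+j+1}+1}^{\mu_{i+j}}q^{\,\mu_i-c}.
\]
Reindexing this geometric sum by $a=\mu_i-c$, which runs over $\mu_i-\mu_{i+j},\dots,\mu_i-\mu_{i+j+1}-1$, gives
$t^{j}q^{\,\mu_i-\mu_{i+j}}\bigl(1+q+\cdots+q^{\,\mu_{i+j}-\mu_{i+j+1}-1}\bigr)=t^{j}q^{\,\mu_i-\mu_{i+j}}[\mu_{i+j}-\mu_{i+j+1}]_q$. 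Summing over $i=1,\dots,l(\mu)-j$ and then over $j=0,\dots,l(\mu)-1$ produces exactly the claimed expression for $f_\mu(q,t)$.

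There is no real obstacle here beyond careful bookkeeping: the content of the argument is the equivalence $l(x)=j\iff\mu_{r+j+1}<c\le\mu_{r+j}$ and the geometric-series evaluation. The only points that warrant a moment's attention are the boundary cases — $j=l(\mu)-1$, which forces $i=1$, and $i=l(\mu)-j$, which forces $\mu_{i+j+1}=0$ — and one checks directly that both are correctly accounted for by the convention $\mu_k=0$ for $k>l(\mu)$.
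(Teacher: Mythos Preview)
Your proof is correct and is essentially the same argument as the paper's: both identify that the boxes in row $i$ with leg length exactly $j$ are those in columns $\mu_{i+j+1}+1,\dots,\mu_{i+j}$, contributing $t^{j}q^{\mu_i-\mu_{i+j}}[\mu_{i+j}-\mu_{i+j+1}]_q$. The only cosmetic difference is that the paper groups by row first and then swaps the order of summation, whereas you group by leg length from the outset.
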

		\begin{proof}
			
			For $i \in \{1,\ldots l(\mu)\}$, in the $i$th row of $\mu$ the rightmost $\mu_{i}-\mu_{i+1}$ many boxes have leg length $0$ and arm lengths $0,\ldots,\mu_{i}-\mu_{i+1}-1$. The next $\mu_{i+1}-\mu_{i+2}$ many boxes from the right have leg length $1$ and arm lengths $\mu_i-\mu_{i+1},\ldots,\mu_i-\mu_{i+2}-1$ and so on. Hence the contribution of the $i$th row to $f_{\mu}(q,t)$ is \begin{eqnarray}
				\sum_{j=0}^{l(\mu)-i} t^j q^{\mu_{i}-\mu_{i+j}}[\mu_{i+j}-\mu_{i+j+1}]_q
			\end{eqnarray}
			
			So \begin{eqnarray}
				f_{\mu}(q,t) & = & \sum_{i=1}^{l(\mu)}
				\sum_{j=0}^{l(\mu)-i} t^j q^{\mu_{i}-\mu_{i+j}}[\mu_{i+j}-\mu_{i+j+1}]_q \\ & = & \sum_{j=0}^{l(\mu)-1} t^j \bigg( \sum_{i=1}^{l(\mu)-j} q^{\mu_i-\mu_{i+j}}[\mu_{i+j}-\mu_{i+j+1}]_q \bigg) 
			\end{eqnarray}
			
		\end{proof}
		
		\begin{corollary}\label{fmu-fmuc}
			Let $\mu \subset (m^{n+1})$, with $\mu_1 < m$ and $l(\mu)=n+1$. Let $\mu^c$ denote the complement of $\mu$ in $(m^{n+1})$. Then \begin{eqnarray}
				f_{\mu}(q,t)-f_{\mu^c}(q,t) = \dfrac{1}{1-q} \sum_{j=0}^{n} t^j(q^{\mu^c_{n+1-j}}-q^{\mu_{n+1-j}})
			\end{eqnarray}  
		\end{corollary}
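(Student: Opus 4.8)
The plan is to obtain the identity directly from the preceding Lemma by a short manipulation together with one index reversal. Write $N=n+1$ and adopt throughout the conventions $\mu_{N+1}=0$, $\mu^c_{N+1}=0$; the hypothesis $\mu_1<m$ is exactly what guarantees $\mu^c_N=m-\mu_1\geq 1$, so that $\mu^c$ is a genuine partition of length $N$ and the Lemma applies to both $\mu$ and $\mu^c$ with the same outer range $j=0,\dots,N-1$. First I would rewrite the Lemma using $[a]_q=\tfrac{1-q^a}{1-q}$, which turns each summand $q^{\mu_i-\mu_{i+j}}[\mu_{i+j}-\mu_{i+j+1}]_q$ into $\tfrac{1}{1-q}\big(q^{\mu_i-\mu_{i+j}}-q^{\mu_i-\mu_{i+j+1}}\big)$, giving
\[
(1-q)\,f_\mu(q,t)=\sum_{j=0}^{N-1}t^j\,(g_j-h_j),\qquad g_j:=\sum_{i=1}^{N-j}q^{\mu_i-\mu_{i+j}},\quad h_j:=\sum_{i=1}^{N-j}q^{\mu_i-\mu_{i+j+1}}.
\]

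Next, comparing the sums defining $h_j$ and $g_{j+1}$ one reads off, for $0\leq j\leq N-1$, the relation $h_j=g_{j+1}+q^{\mu_{N-j}}$ (the extra $i=N-j$ term uses $\mu_{N+1}=0$; here $g_N:=0$ is the empty sum). Substituting this in,
\[
(1-q)\,f_\mu(q,t)=\Big(\sum_{j=0}^{N-1}t^j g_j-\sum_{j=0}^{N-1}t^j g_{j+1}\Big)-\sum_{j=0}^{N-1}t^j q^{\mu_{N-j}},
\]
and the crucial observation is that the bracketed expression depends on $\mu$ only through the numbers $g_0,\dots,g_{N-1}$. These are complementation-invariant: since $\mu^c_i=m-\mu_{N+1-i}$, the substitution $i\mapsto N+1-i-j$ yields
\[
\sum_{i=1}^{N-j}q^{\mu^c_i-\mu^c_{i+j}}=\sum_{i=1}^{N-j}q^{\mu_{N+1-i-j}-\mu_{N+1-i}}=\sum_{i=1}^{N-j}q^{\mu_i-\mu_{i+j}}=g_j,
\]
so the $g_j$ attached to $\mu^c$ coincide with those attached to $\mu$.

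Applying the displayed formula to both $\mu$ and $\mu^c$ and subtracting, the bracketed term cancels entirely and what remains is
\[
(1-q)\big(f_\mu(q,t)-f_{\mu^c}(q,t)\big)=\sum_{j=0}^{N-1}t^j\big(q^{\mu^c_{N-j}}-q^{\mu_{N-j}}\big),
\]
which is the assertion after dividing by $1-q$ and recalling $N=n+1$. There is no deep obstacle here; the only thing requiring care is the bookkeeping — verifying the shift $h_j=g_{j+1}+q^{\mu_{N-j}}$, the reversal identity for the $g_j$, and keeping the boundary conventions ($\mu_{N+1}=\mu^c_{N+1}=0$, $g_N=0$, $\ell(\mu^c)=n+1$) mutually consistent, the last being precisely where $\mu_1<m$ is used.
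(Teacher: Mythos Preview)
Your proof is correct and follows essentially the same approach as the paper's: both apply the preceding Lemma to $\mu$ and $\mu^c$, use the index reversal $i\mapsto N+1-i-j$ coming from $\mu^c_i=m-\mu_{N+1-i}$, and obtain the result by cancellation. Your packaging via the observation that the quantities $g_j$ are complementation-invariant is a bit slicker than the paper's direct subtraction and telescoping, but the content is the same.
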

		
		\begin{proof}
			We have \begin{eqnarray}
				f_{\mu}(q,t) = \sum_{j=0}^{n}t^j\bigg( \sum_{i=1}^{n-j}q^{\mu_i-\mu_{i+j}}[\mu_{i+j}-\mu_{i+j+1}]_q+ q^{\mu_{n+1-j}-\mu_{n+1}}[\mu_{n+1}]_q \bigg)
			\end{eqnarray}
			Since $(\mu^c)_i = m - \mu_{n+2-i}$, we have $l(\mu^c)=n+1$ since $\mu_1 <m$. Thus \begin{eqnarray}
				f_{\mu^c}(q,t) & = & \sum_{j=0}^{n+1}t^j \bigg( \sum_{i=1}^{n-j} q^{\mu_{n+2-i-j}-\mu_{n+2-i}}[\mu_{n+1-i-j}-\mu_{n+2-i-j}]_q \nonumber \\ & & \quad \quad \quad \quad + q^{\mu_1-\mu_{n+2-n-1+j}}[m-\mu_1]_q \bigg) \nonumber \\ & = & \sum_{j=0}^{n+1}t^j \bigg(\sum_{i'=1}^{n-j} q^{\mu_{i'+1}-\mu_{i'+j+1}}[\mu_{i'}-\mu_{i'+1}]_q + q^{\mu_1-\mu_{j+1}}[m-\mu_1]_q \bigg) \nonumber 
			\end{eqnarray}
			
			Therefore the coefficient of $t^j$ in $f_{\mu}(q,t)-f_{\mu^c}(q,t)$ is \begin{eqnarray}
				& & \dfrac{1}{1-q} \bigg( \sum_{i=1}^{n-j} (q^{\mu_i-\mu_{i+j}} - q^{\mu_i-\mu_{i+j+1}}) - \sum_{i'=1}^{n-j}(q^{\mu_{i'+1}-\mu_{i'+j+1}}-q^{\mu_{i'}-\mu_{i'+j+1}}) \nonumber \\ & & + q^{\mu_{n+1-j}-\mu_{n+1}} - q^{\mu_{n+1-j}} - q^{\mu_1-\mu_{j+1}} + q^{m-\mu_{j+1}} \bigg) \nonumber	\\ & = & \dfrac{1}{1-q} \bigg( q^{\mu_1-\mu_{1+j}} - q^{\mu_{n+1-j}-\mu_{n+1}} + q^{\mu_{n+1-j}-\mu_{n+1}} - q^{\mu_{n+1-j}} - q^{\mu_1-\mu_{j+1}} + q^{m-\mu_{j+1}} \bigg) \nonumber \\ & = & \dfrac{1}{1-q} (q^{m-\mu_{j+1}}-q^{\mu_{n+1-j}}) = \dfrac{1}{1-q} (q^{\mu^c_{n+1-j}}-q^{\mu_{n+1-j}})  \nonumber
			\end{eqnarray}
		\end{proof}	
		
		\begin{lemma}\label{m-muineq}
			Suppose $\mu \vdash mn$, $l(\mu)=n+1$ and $\mu \subset (m^{n+1})$. Then for all $i \in \{ 1,...,n+1 \}$, $\mu^c_i \leq \mu_{i}$   
		\end{lemma}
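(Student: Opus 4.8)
The plan is to convert the statement into an inequality purely about the parts of $\mu$ and then settle it by elementary counting. Since $\mu\subset(m^{n+1})$ with $l(\mu)=n+1$, its complement is $\mu^c=(m-\mu_{n+1},m-\mu_n,\ldots,m-\mu_1)$, so $\mu^c_i=m-\mu_{n+2-i}$, and the assertion $\mu^c_i\le\mu_i$ is equivalent to
\[
	\mu_i+\mu_{n+2-i}\ge m \qquad\text{for all } i\in\{1,\ldots,n+1\}.
\]
I would first note that $l(\mu)=n+1$ forces $n\ge 1$, so all the indices that appear are legitimate, and that $|\mu|=\sum_{j=1}^{n+1}\mu_j=mn$ with each $\mu_j\le m$.

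The main case is $i\ne n+2-i$. Here positions $i$ and $n+2-i$ are distinct, so deleting the corresponding two parts from the identity $\sum_{j=1}^{n+1}\mu_j=mn$ leaves exactly $n-1$ parts, each of which is at most $m$ because $\mu\subset(m^{n+1})$. Hence $mn-\mu_i-\mu_{n+2-i}\le(n-1)m$, which rearranges to $\mu_i+\mu_{n+2-i}\ge m$, as required.

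The one step calling for a different idea --- and the place I expect the only real obstacle --- is the degenerate middle case $i=n+2-i$, that is $n$ even and $i=(n+2)/2$, where the inequality reduces to $2\mu_i\ge m$ and the deletion trick gives nothing useful. For this I would use that $\mu$ is weakly decreasing: the parts $\mu_1,\ldots,\mu_{i-1}$ contribute at most $(i-1)m$, while the parts $\mu_{i+1},\ldots,\mu_{n+1}$, of which there are $n+1-i=i-1$ (using $n=2i-2$), are each at most $\mu_i$ and so contribute at most $(i-1)\mu_i$. Adding $\mu_i$ itself gives $mn\le(i-1)m+\mu_i+(i-1)\mu_i=(i-1)m+i\mu_i$; since $mn=2(i-1)m$ this yields $(i-1)m\le i\mu_i$, hence $\mu_i\ge\frac{i-1}{i}\,m\ge\frac{m}{2}$, the last step because $i=(n+2)/2\ge 2$. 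Combining the two cases proves the lemma.
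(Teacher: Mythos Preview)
Your proof is correct and follows essentially the same two-case strategy as the paper: reduce to $\mu_i+\mu_{n+2-i}\ge m$, handle the case of two distinct indices by bounding the remaining $n-1$ parts by $m$, and treat the middle index $i=n+2-i$ separately. In the middle case your argument is actually tighter than the paper's: you exploit the weak decrease $\mu_j\le\mu_i$ for $j>i$ to get $(i-1)m\le i\mu_i$, whereas the paper bounds all $n$ remaining parts by $\mu_1$ and writes $(n-1)\mu_1\ge\sum_{j\ne i}\mu_j$, which miscounts the number of terms (there are $n$, not $n-1$) --- so your version in fact repairs a small slip in the published argument.
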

		\begin{proof}
			Suppose $\mu^c_i = m-\mu_{n+2-i}>\mu_i  $ for some $i\in \{1,...,n+1\}$.
			
			If $i \neq n+2-i$ then \begin{eqnarray}
				(n-1)\mu_1 \geq \sum_{j \notin \{i,n+2-i\}} \mu_j  = |\mu|-(\mu_i+\mu_{n+2-i}) >(n-1)m
			\end{eqnarray}
			which implies $\mu_1>m$, a contradiction.
			
			If $i = n+2-i$ then $m-\mu_{n+2-i}>\mu_i $ implies $2\mu_i < m$. Then \begin{eqnarray}
				2(n-1)\mu_1 \geq 2 \sum_{j \neq i  }\mu_j = 2|\mu|-2\mu_i > m(2n-1) 
			\end{eqnarray} 
			since $m \geq \mu_1$ we get $2(n-1)>2n-1$, a contradiction.
		\end{proof}
		
		With the assumptions of \autoref{fmu-fmuc}, by \autoref{m-muineq} \begin{eqnarray}
			f_\mu(q,t)-f_{\mu^c}(q,t) \in \mathbb{Z}_{\geq 0}[q,t]
		\end{eqnarray}
		
		\begin{corollary}
			\emph{\ref{hagdualconj}} is true whenever $K_{\lambda,\mu} = 1$
		\end{corollary}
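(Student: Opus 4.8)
The plan is to assemble the results already obtained in this section. By the reduction principle recalled at the beginning of the section---\autoref{rowmult} together with \autoref{cor:rectangleinsert}---and the observation that $K_{\lambda,\mu}=1$ forces $K_{\lambda^j,\mu^j}=1$ for every block in the decomposition (by the multiplicativity $K_{\lambda,\mu}=\prod_j K_{\lambda^j,\mu^j}$ noted in the proof of \autoref{rowmult}, and the invariance of $K^{(i)}$ under removal of a common rectangle), it suffices to verify \emph{\ref{hagdualconj}} for \emph{irreducible} pairs $(\lambda,\mu)$ with $K_{\lambda,\mu}=1$. By \autoref{BZtheorem} such a pair has one of two shapes. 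The case $\lambda=(n)$ is exactly \autoref{hagforroworcol}, so the whole matter reduces to the case $\lambda=(m^n)$, $\mu\vdash mn$, $l(\mu)=n+1$, $\mu\subset(m^{n+1})$; applying irreducibility to the first partial sum gives in addition $\mu_1<\lambda_1=m$.

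For this remaining case I would first invoke \autoref{complement}. Since $\lambda$ and $\mu$ both sit inside $(m^{n+1})$ and the complement of $(m^n)$ in that rectangle is the single row $(m)$, we get $K^{(2)}_{\lambda,\mu}(q,t)=K^{(2)}_{(m),\mu^c}(q,t)$ with $\mu^c\vdash m$, whence, using $k_{\lambda,\mu}=K^{(2)}_{\lambda,\mu}c'_\mu$ from \autoref{k=K2c'},
\[
k_{\lambda,\mu}(q,t)=k_{(m),\mu^c}(q,t)\,\frac{c'_\mu(q,t)}{c'_{\mu^c}(q,t)}.
\]
Because $\mu_1<m$ we have $l(\mu^c)=n+1$, so the hypotheses of \autoref{fmu-fmuc} are in force.

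The heart of the argument is to show that $c'_\mu/c'_{\mu^c}$ is an honest polynomial, in fact a product of $|\mu|-|\mu^c|=m(n-1)$ factors each of the form $(1-q^\alpha t^\beta)$ with $\alpha,\beta\in\mathbb{Z}_{\ge 0}$. Writing $f_\nu(q,t)=\sum_{x\in\nu}q^{a(x)}t^{l(x)}$, a monomial $q^\gamma t^\delta$ occurring (with multiplicity) in $f_\nu$ corresponds to a factor $(1-q^{\gamma+1}t^\delta)$ of $c'_\nu$, so this claim amounts to $f_\mu(q,t)-f_{\mu^c}(q,t)\in\mathbb{Z}_{\ge 0}[q,t]$. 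That is precisely what \autoref{fmu-fmuc} and \autoref{m-muineq} deliver: the explicit formula of \autoref{fmu-fmuc} collapses the difference to $\tfrac{1}{1-q}\sum_{j}t^j\big(q^{\mu^c_{n+1-j}}-q^{\mu_{n+1-j}}\big)$, and each summand $\tfrac{q^{\mu^c_{n+1-j}}-q^{\mu_{n+1-j}}}{1-q}$ is a polynomial with non-negative integer coefficients exactly because $\mu^c_{n+1-j}\le\mu_{n+1-j}$. I expect this cancellation---the combinatorial inequality of \autoref{m-muineq} and the arm/leg bookkeeping behind \autoref{fmu-fmuc}---to be the one genuinely delicate point; everything else is formal.

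Finally I would specialize $q=t^k$. Each factor $(1-q^\alpha t^\beta)$ of $c'_\mu/c'_{\mu^c}$ becomes $1-t^{k\alpha+\beta}$ with $k\alpha+\beta\ge 0$, so dividing by $(1-t)^{m(n-1)}$ turns $c'_\mu(t^k,t)/c'_{\mu^c}(t^k,t)$ into a product of $t$-numbers, hence into an element of $\mathbb{Z}_{\ge 0}[t]$. Since $k_{(m),\mu^c}(t^k,t)/(1-t)^{m}\in\mathbb{Z}_{\ge 0}[t]$ by \autoref{hagforroworcol}, multiplying the two factorizations gives $k_{\lambda,\mu}(t^k,t)/(1-t)^{mn}\in\mathbb{Z}_{\ge 0}[t]$ for every $k\ge 0$, which is \emph{\ref{hagdualconj}} for $(\lambda,\mu)$ and completes the proof.
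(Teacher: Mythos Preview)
Your proposal is correct and follows essentially the same route as the paper: reduce to irreducible pairs via \autoref{rowmult} and \autoref{cor:rectangleinsert}, handle $\lambda=(n)$ by \autoref{hagforroworcol}, and for $\lambda=(m^n)$ use \autoref{complement} to pass to $k_{(m),\mu^c}$, then invoke \autoref{fmu-fmuc} together with \autoref{m-muineq} to see that $c'_\mu/c'_{\mu^c}$ is a product of $m(n-1)$ factors of the form $(1-q^\alpha t^\beta)$. You are in fact a bit more careful than the paper in making explicit that irreducibility gives $\mu_1<m$ (needed for the hypothesis of \autoref{fmu-fmuc}) and that $K_{\lambda,\mu}=1$ propagates to each block.
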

		
		\begin{proof}
			We showed that when $\lambda,\mu$ is an irreducible pair with $K_{\lambda,\mu}=1$, then \ref{hagdualconj} is true. \autoref{cor:rectangleinsert} shows that \ref{hagdualconj} remains true in the patching together of irreducible pairs.
		\end{proof}

		\subsubsection{When $K_{\mu',\lambda'}=1$}
		
		\begin{proposition}
			\emph{\ref{hagdualconj}} is true whenever $K_{\mu',\lambda'}=1$
		\end{proposition}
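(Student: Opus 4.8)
The plan is to transfer the problem to the conjugate pair $(\mu',\lambda')$, which by hypothesis has Kostka number $1$, to reduce it via the decomposition principles to the case where $(\mu',\lambda')$ is irreducible, and then to use complementation to land in the already-settled case where $\mu$ is a single column (\autoref{hagforroworcol}).

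First I would record, using \autoref{k=K2c'} and \autoref{Kdual}, that $k_{\lambda,\mu}(q,t)=(K^{(2)})^{-1}_{\mu',\lambda'}(t,q)\,c'_{\mu}(q,t)$, and that the inverse matrices $(K^{(i)})^{-1}$ obey the same multiplicativity and complementation identities as the $K^{(i)}$, namely \autoref{rowmult}, \autoref{colmult} and \autoref{complement}. Since $K_{\mu',\lambda'}=1$, this Kostka number factors as a product over any such decomposition, so every factor equals $1$; decomposing $(\lambda,\mu)$ in both the row and the column directions (by \autoref{rowmult} and \autoref{colmult}, removing common rectangles by the argument of \autoref{cor:rectangleinsert} applied in both directions, and carrying the normalization $c'_\mu$ along at each step as in that proof) reduces \emph{\ref{hagdualconj}} for $(\lambda,\mu)$ to the case where neither $(\lambda,\mu)$ nor $(\mu',\lambda')$ admits any further decomposition, i.e. where $(\mu',\lambda')$ is irreducible. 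By \autoref{BZtheorem} there are then two possibilities: either $\mu'=(N)$, so that $\mu=(1^N)$ is a single column and \emph{\ref{hagdualconj}} holds by \autoref{hagforroworcol}; or $\mu'=(m^n)$ with $\ell(\lambda')=n+1$, so that $\mu=(n^m)$ and, since $\lambda'\le(m^n)$ forces $\lambda'_1\le m$, both $\lambda$ and $\mu$ lie in the rectangle $((n+1)^m)$.

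In this second case I would apply \autoref{complement} inside $((n+1)^m)$, obtaining $K^{(2)}_{\lambda,\mu}(q,t)=K^{(2)}_{\lambda^c,\mu^c}(q,t)$ with $\mu^c=(1^m)$ the complement of $(n^m)$ and $\lambda^c$ (of size $m$) the complement of $\lambda$; hence by \autoref{k=K2c'} one gets $k_{\lambda,\mu}(q,t)=k_{\lambda^c,(1^m)}(q,t)\cdot c'_{(n^m)}(q,t)/c'_{(1^m)}(q,t)$. A direct computation of arms and legs gives $c'_{(n^m)}(q,t)/c'_{(1^m)}(q,t)=\prod_{a=2}^{n}\prod_{b=0}^{m-1}(1-q^a t^b)$, a product of exactly $|\lambda|-|\lambda^c|=m(n-1)$ factors of the form $(1-q^a t^b)$ with $a\ge 1$ and $b\ge 0$; after the substitution $q=t^k$ and division by $(1-t)^{m(n-1)}$ this is a product of $t$-integers $[ka+b]_t$ (or is identically $0$), hence it lies in $\mathbb{Z}_{\ge 0}[t]$. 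Together with \autoref{hagforroworcol} for the column pair $(\lambda^c,(1^m))$, this gives $k_{\lambda,\mu}(t^k,t)/(1-t)^{|\lambda|}\in\mathbb{Z}_{\ge 0}[t]$, as wanted.

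The step I expect to be the main obstacle is the reduction. Unlike ordinary Kostka numbers, irreducibility of $(\lambda,\mu)$ does not entail irreducibility of $(\mu',\lambda')$, so one genuinely has to decompose in the column direction as well, and one must check at every stage — both in the decompositions and in the use of \autoref{complement} — that the surviving $c'$-factors cancel down to a product of terms $(1-q^a t^b)$ with $a\ge 1$, $b\ge 0$, whose count matches the corresponding decrease of $|\lambda|$. This is precisely the kind of bookkeeping done in \autoref{cor:rectangleinsert}; once it is carried out, every multiplicity-one pair of this type is reduced to the single-column case, which is known.
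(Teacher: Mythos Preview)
Your proposal is correct and follows essentially the same route as the paper: reduce via \autoref{rowmult}/\autoref{colmult} and \autoref{cor:rectangleinsert} to the case where $(\mu',\lambda')$ is irreducible, then use \autoref{complement} to reach the single-column situation $k_{\lambda^c,(1^m)}$ handled by \autoref{hagforroworcol}, checking that the leftover $c'$-ratio is a product of $m(n-1)$ factors $(1-q^a t^b)$ with $a\ge 1$, $b\ge 0$. The only cosmetic difference is that the paper applies \autoref{complement} to $(\mu',\lambda')$ inside $(m^{n+1})$ and then invokes \autoref{Kdual} to return to $k_{\lambda^c,(1^m)}$, whereas you apply \autoref{complement} directly to $(\lambda,\mu)$ inside $((n+1)^m)$; since complementation commutes with conjugation these are the same operation, and your version saves one appeal to duality. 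Your caution about needing a column analogue of \autoref{cor:rectangleinsert} is well placed---the paper glosses over this, but the bookkeeping you outline (carrying $c'_\mu$ through a column decomposition) is straightforward and valid.
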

		\begin{proof}
			
			Suppose $\lambda,\mu \in Par$ such that $K_{\mu',\lambda'}=1$. To compute $k_{\lambda,\mu}(q,t)$ we use $(K^{(2)})^{-1}_{\mu',\lambda'}(t,q)$ in this case (\autoref{k=K2c'}). As before by use of \autoref{rowmult} and \autoref{colmult} we reduce the calculation of $(K^{(2)})^{-1}_{\mu',\lambda'}(t,q)$ to the cases when $\mu',\lambda'$ is an irreducible pair. The case $\mu'=(m)$ for some $m$ is done by \autoref{hagforroworcol}. Let $\mu' = (m^n), l(\lambda') = n+1$. We have by \autoref{complement} \begin{eqnarray}
				(K^{(2)})^{-1}_{\mu',\lambda'}(t,q) = (K^{(2)})^{-1}_{(m),\lambda'^{c}}(t,q)
			\end{eqnarray}    
			
			Hence by \autoref{k=K2c'}, \begin{eqnarray}
				k_{\lambda,\mu}(q,t) = k_{\lambda^c,(1^m)}(q,t) \dfrac{c_{(m^n)}(q,t)}{c_{(m)}(q,t)}	\end{eqnarray}
			
			In $\dfrac{c_{(m^n)}(q,t)}{c_{(m)}(q,t)}$, the contribution from the last row of $(m^n)$ cancels with the contribution from $(m)$, so it is a product of $mn-1$ terms each of the form $1-q^{\alpha}t^{\beta}$ for some $\alpha,\beta \in \mathbb{Z}_{\geq 0}$ and since by \autoref{hagforroworcol}, $\dfrac{k_{\lambda^c,1^m}(t^k,t)}{(1-t)^m} \in \mathbb{Z}_{\geq 0}[t]$ this implies \ref{hagdualconj} for $\lambda,\mu$, and hence for all pairs $\lambda,\mu$ with $K_{\mu',\lambda'}=1$ when $\mu',\lambda'$ is irreducible. Applying \autoref{cor:rectangleinsert} we conclude the statement for general $\lambda,\mu$ with $K_{\mu',\lambda'}=1$

		\end{proof}

		\end{document}